\newtheorem{theorem}{Theorem}[section]
\newtheorem{lemma}{Lemma}[section]
\newtheorem{corollary}{Corollary}[section]
\theoremstyle{definition}
\newtheorem{remark}{Remark}[section]
\numberwithin{equation}{section}
\begin{document}

\title[Positive periodic solutions to a Minkowski-curvature equation]{Positive periodic solutions to an indefinite \\ Minkowski-curvature equation}

\author[A.~Boscaggin]{Alberto Boscaggin}

\address{
Department of Mathematics, University of Torino\\
Via Carlo Alberto 10, 10123 Torino, Italy}

\email{alberto.boscaggin@unito.it}

\author[G.~Feltrin]{Guglielmo Feltrin}

\address{
Department of Mathematics, University of Torino\\
Via Carlo Alberto 10, 10123 Torino, Italy}

\email{guglielmo.feltrin@unito.it}

\thanks{Work written under the auspices of the 
Grup\-po Na\-zio\-na\-le per l'Anali\-si Ma\-te\-ma\-ti\-ca, la Pro\-ba\-bi\-li\-t\`{a} e le lo\-ro
Appli\-ca\-zio\-ni (GNAMPA) of the Isti\-tu\-to Na\-zio\-na\-le di Al\-ta Ma\-te\-ma\-ti\-ca (INdAM).
The authors are supported by the project ERC Advanced Grant 2013 n.~339958 ``Complex Patterns for Strongly Interacting Dynamical Systems - COMPAT''.
\\
\textbf{Preprint -- May 2018}} 

\subjclass{34B08, 34B15, 34B18, 34C25, 37J10, 47H11.}

\keywords{Minkowski-curvature operator, indefinite weight, positive solutions, periodic solutions, subharmonic solutions, coincidence degree theory, Poincar\'{e}--Birkhoff theorem.}

\date{}

\dedicatory{}

\begin{abstract}
We investigate the existence, non-existence, multiplicity of positive periodic solutions, both harmonic (i.e., $T$-periodic) and subharmonic 
(i.e., $kT$-periodic for some integer $k \geq 2$) to the equation 
\begin{equation*}
\Biggl{(} \dfrac{u'}{\sqrt{1-(u')^{2}}} \Biggr{)}' + \lambda a(t) g(u) = 0,
\end{equation*}
where $\lambda > 0$ is a parameter, $a(t)$ is a $T$-periodic sign-changing weight function and $g \colon \mathopen{[}0,+\infty\mathclose{[} \to \mathopen{[}0,+\infty\mathclose{[}$ is a continuous function having superlinear growth at zero. In particular, we prove that for both $g(u)=u^{p}$, with $p>1$, and $g(u)= u^{p}/(1+u^{p-q})$, with $0 \leq q \leq 1 < p$, the equation has no positive $T$-periodic solutions for $\lambda$ close to zero and two positive $T$-periodic solutions (a ``small'' one and a ``large'' one) for $\lambda$ large enough. Moreover, in both cases the ``small'' $T$-periodic solution is surrounded by a family of positive subharmonic solutions with arbitrarily large minimal period.
The proof of the existence of $T$-periodic solutions relies on a recent extension of Mawhin's coincidence degree theory for locally compact operators in product of Banach spaces, while subharmonic solutions are found by an application of the Poincar\'{e}--Birkhoff fixed point theorem, after a careful asymptotic analysis of the $T$-periodic solutions for $\lambda \to +\infty$. 
\end{abstract}

\maketitle

\section{Introduction}\label{section-1}

In this paper, we investigate the existence of positive periodic solutions to the equation
\begin{equation}\label{eq-main}
\Biggl{(} \frac{u'}{\sqrt{1-(u')^{2}}}\Biggr{)}' + \lambda a(t) g(u) = 0,
\end{equation}
where $\lambda > 0$ is a parameter, $a(t)$ is a $T$-periodic and locally integrable weight function and $g: \mathbb{R}^{+} := \mathopen{[}0,+\infty\mathclose{[}\to \mathbb{R}^{+}$ is a continuous function satisfying the sign-condition
\begin{equation*}
g(0) = 0, \qquad g(u) > 0 \quad \text{for $u>0$}.
\leqno{(g_{*})}
\end{equation*}
Incidentally, let us observe that, by the above condition, equation \eqref{eq-main} has the trivial solution $u(t) \equiv 0$ and, unless $a(t) \equiv 0$, no other constant (periodic) solutions.

Equation \eqref{eq-main} is driven by a strongly nonlinear differential operator of $\varphi$-laplacian type, precisely
\begin{equation}\label{phi-L}
u \mapsto - (\varphi(u'))', \quad \text{where } \; \varphi(\xi) := \frac{\xi}{\sqrt{1-\xi^{2}}}.
\end{equation}
As is well known, this is the one-dimensional version of the partial differential operator
\begin{equation*}
u \mapsto -\mathrm{div}\,\Biggl{(} \dfrac{\nabla u}{\sqrt{1- | \nabla u |^{2}}}\Biggr{)},
\end{equation*}
which in turn is usually meant as a mean-curvature operator in Lorentz--Minkowski spaces (cf.~\cite{BaSi-8283,Fl-79,Ge-83}); interestingly, it also plays a role in the theory of nonlinear electromagnetism, being known in this context as Born--Infeld operator (cf.~\cite{BoCoFo-pp,BdAP-16} and the references therein). Recently, there has been a significant interest in the study of the existence and multiplicity issues of the associated boundary value problems, both in the ODE and in the PDE cases (see, among many others, \cite{Az-14,Az-16,BeJeMa-09,BeJeTo-13,BeJeTo-13a,BeMa-07,CoCoObOm-12,CoCoRi-14,CoObOmRi-13,Ma-13}). However, the interplay between the nonlinear differential operator, the periodic boundary conditions and the specific form of the nonlinear term $f_{\lambda}(t,u) = \lambda a(t) g(u)$ makes the analysis of \eqref{eq-main} rather new with respect to the existing literature.

To motivate this assertion, let us start by discussing the solvability picture of equation \eqref{eq-main} when paired with Dirichlet boundary conditions $u(0) = u(T) = 0$. A first simple observation is that, in this case, due to the natural bound $|u'(t)| < 1$, any solution to 
\eqref{eq-main} is a priori bounded, precisely $|u(t)|< T/2$ for every $t$; hence, the solvability of the Dirichlet boundary value problem is not affected by the value of $g(u)$ for $u \geq T/2$. On the other hand, it has been proved in \cite{CoCoObOm-12} that, when $g(u)$ has superlinear growth at zero, namely 
\begin{equation}\label{super0}
\lim_{u \to 0^{+}} \frac{g(u)}{u} = 0,
\end{equation}
then two positive solutions to the Dirichlet problem associated with \eqref{eq-main} exist when $\lambda$ is sufficiently large, provided that $a(t)$ is positive somewhere. Such a two-solution theorem has been later extended to the radial Dirichlet problem on a ball in \cite{BeJeTo-13,CoCoRi-14}, as well as to a genuine PDE setting in \cite{CoObOmRi-13}. We also mention that very recently, via a dynamical systems approach, it has been proved that more and more pairs of sign-changing Dirichlet solutions to \eqref{eq-main} appear as $\lambda$ becomes larger and larger (see \cite{BoGa-ccm}).

The above two-solution geometry, which is quite popular in Nonlinear Functional Analysis (see \cite{Am-72}), can be roughly explained as follows. Condition \eqref{super0} implies that the nonlinear term $\lambda a(t)g(u)/u$ in \eqref{eq-main} stays, for $u$ near zero, below the principal eigenvalue of the linear operator $u \mapsto -u''$ with Dirichlet boundary conditions (here, $-u''$ appears as the linearization of the Minkowski-curvature operator near zero). This fact, together with the global a priori bound, can be used to show that a suitably defined topological degree associated with \eqref{eq-main} is equal to $1$ on small balls as well as on large balls centered at the origin (of a Banach space of $T$-periodic functions). On the other hand, the largeness of the parameter $\lambda$ implies the existence of a ball with intermediate radius on which the topological degree is equal to $0$. Therefore, the existence of two positive solutions (a ``small'' one and a ``large'' one) follows from the additivity property of the degree. A variational viewpoint can also be adopted, providing the two positive solutions as a global minimum and a mountain pass type critical point of the associated action functional.

When dealing with positive $T$-periodic solutions to \eqref{eq-main}, the situation differs substantially. Indeed, now the principal eigenvalue of $-u''$ is null, so that \eqref{eq-main} experiences a resonance near zero; moreover, contrarily to Dirichlet boundary conditions, the obvious global a priori bound is no longer available, so that growth assumptions at infinity for $g(u)$ are expected to play a role.

As a matter of fact, some other important considerations about the periodic problem associated with \eqref{eq-main} can be deduced with elementary arguments. First, as it can be shown just by integrating the equation on $\mathopen{[}0,T\mathclose{]}$, positive $T$-periodic solutions cannot exist if $a(t) \geq 0$ for every $t$ and, consequently, one is led to deal with a sign-changing weight $a(t)$ (often named ``indefinite weight'', starting with \cite{HeKa-80}). Second, as observed various times in the related literature dealing with the second order linear operator $- u''$ (compare with \cite{BaPoTe-88,BoZa-12,Fe-18}), the mean value condition 
\begin{equation*}
\int_{0}^{T} a(t) \,dt <0
\leqno{(a_{\#})}
\end{equation*}
also naturally appears. Indeed, by dividing equation \eqref{eq-main} by $g(u(t)) (\neq 0)$ and integrating (by parts) on a period, one finds
\begin{equation*}
- \int_{0}^{T} a(t)\,dt = \int_{0}^{T} \biggl{(} \frac{u'(t)}{g(u(t))}\biggr{)}^{2} \frac{g'(u(t))}{\sqrt{1-u'(t)^{2}}}\,dt, 
\end{equation*}
implying that $(a_{\#})$ is a necessary condition when $g(u)$ is a strictly increasing function of class $\mathcal{C}^{1}$.

For the semilinear equation
\begin{equation}\label{eq-sem}
u'' + \lambda a(t) g(u) = 0
\end{equation}
some results in this indefinite setting have been obtained in \cite{BoFeZa-16,FeZa-15ade}. To summarize the main conclusions obtained therein, we consider the two model equations
\begin{equation}\label{eq-sem1}
u'' + \lambda a(t) u^{p} = 0, \quad \text{with $p >1$,}
\end{equation}
and
\begin{equation}\label{eq-sem2}
u'' + \lambda a(t) \dfrac{u^{p}}{1+u^{p-q}}= 0, \quad \text{with $0 \leq q < 1 < p$.}
\end{equation}
We notice that in both cases the nonlinear term is superlinear near zero (that is, \eqref{super0} is satisfied), while the behavior at infinity
differs, being of superlinear type for \eqref{eq-sem1} and of sublinear one for \eqref{eq-sem2}. Assuming the mean value condition $(a_{\#})$ (together with a mild condition on the nodal behavior of $a(t)$, see $(a_{*})$ in Section~\ref{section-1.1}), the existence of a positive $T$-periodic solution to \eqref{eq-sem1} is guaranteed for every $\lambda > 0$ (cf.~\cite[Theorem~3.2]{FeZa-15ade}), while two positive $T$-periodic solutions to \eqref{eq-sem2} exist, but only for $\lambda$ sufficiently large (cf.~\cite[Theorem~4.4 and Theorem~4.6]{BoFeZa-16}).

Roughly speaking, we can say that the superlinearity at zero, when paired with $(a_{\#})$ (and further suitable technical assumptions, see $(g_{0})$ and $(g_{0}')$ in Section~\ref{section-1.1}), still provides, in the indefinite periodic setting, the desired geometry near zero, thus implying that the topological degree is equal to $1$ on small balls (it is impressive to interpret $(a_{\#})$ as a non-resonance condition pushing the term $\lambda a(t)g(u)/u$ below the principal eigenvalue for $u \to 0^{+}$). Then, depending on the behavior of the nonlinear term $g(u)$ at infinity, two very different scenarios arise. On one hand, when $g(u)$ behaves like $u^{q}$ with $0 \leq q < 1$ for $u \to +\infty$, the topological degree on large balls is equal to $1$. In such a case, using the largeness of the parameter $\lambda$ in order to create a ball of intermediate radius on which the degree equal to $0$, one can conclude that a two-solution result holds true, similarly as for \eqref{eq-main} with Dirichlet boundary conditions. On the other hand, when $g(u)$ is superlinear at infinity the topological degree is equal to $0$ on large balls and, consequently, the existence of a positive solution follows for any $\lambda > 0$. 

Motivated by the above discussion, it seems interesting to investigate the existence of positive $T$-periodic solutions to \eqref{eq-main}: indeed, while it is reasonable to expect that the mean value condition $(a_{\#})$ is still going to play a role, it is not clear to what extent the behavior of $g(u)$ at infinity can interfere with the nonlinear differential operator, thus giving rise (or not) to different existence/multiplicity patterns. In particular, one could wonder about the solvability of Minkowski-curvature counterparts of \eqref{eq-sem1} and \eqref{eq-sem2}, namely 
\begin{equation}\label{eq-min1}
(\varphi(u'))' + \lambda a(t) u^{p} = 0, \quad \text{with $p > 1$,}
\end{equation}
and
\begin{equation}\label{eq-min2}
(\varphi(u'))' + \lambda a(t) \dfrac{u^{p}}{1+u^{p-q}} = 0, \quad \text{with $0 \leq q < 1 < p$,}
\end{equation}
with $\varphi$ defined in \eqref{phi-L}.
As a consequence of our first main result, we can prove that both the above problems behave in the same way, according to the following two-solution theorem.

\begin{theorem}\label{th-1.1}
Let $a(t)$ be a sign-changing, continuous and $T$-periodic function, having a finite number of zeros in $\mathopen{[}0,T\mathclose{]}$ and satisfying condition $(a_{\#})$. Then, there exist $\lambda_{*}$ and $\lambda^{*}$, with 
$0 < \lambda_{*} \leq \lambda^{*}$, such that both equation \eqref{eq-min1} and equation \eqref{eq-min2} have no positive $T$-periodic solutions for $\lambda \in \mathopen{]}0,\lambda_{*}\mathclose{[}$ and two positive $T$-periodic solutions for $\lambda > \lambda^{*}$.
\end{theorem}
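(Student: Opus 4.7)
My plan is to deduce Theorem~\ref{th-1.1} from a more general existence/multiplicity result proved via Mawhin's coincidence degree theory in the version for $\varphi$-Laplacian operators (locally compact on products of Banach spaces, as flagged in the abstract). I would set the problem as a coincidence equation $Lu = N_\lambda u$, with $L$ encoding the Minkowski operator with $T$-periodic boundary conditions and $N_\lambda u(t) = \lambda a(t) g(u(t))$, the ambient space being a space of $T$-periodic Lipschitz functions with $\|u'\|_\infty<1$, which is natural in view of the domain of $\varphi$. The general result would rest on three degree computations: $\mathrm{deg}=1$ on a small ball $B_r$, $\mathrm{deg}=1$ on a large ball $B_R$, and $\mathrm{deg}=0$ on an intermediate ball $B_{r_0}$ for $\lambda$ large. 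For both model nonlinearities in \eqref{eq-min1}--\eqref{eq-min2} the sign-condition $(g_{*})$ and the superlinearity \eqref{super0} are immediate, so Theorem~\ref{th-1.1} would follow by checking some mild technical growth assumptions common to both.

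On the small ball, \eqref{super0} combined with $(a_{\#})$ pushes the nonlinearity, in mean, below the zero principal eigenvalue of the linearization; a standard admissible homotopy to a constant operator then yields $\mathrm{deg}=1$ on $B_r$, paralleling the computation in \cite{BoFeZa-16,FeZa-15ade} for the semilinear case. The crux is the degree on the large ball, where the Minkowski structure plays a decisive role: unlike in the semilinear case \eqref{eq-sem1}, for which superlinearity at infinity forces degree $0$ on large balls, here the automatic bound $|u'|<1$ combined with $(a_{\#})$ yields a priori bounds along a suitable homotopy, \emph{irrespective} of the behavior of $g$ at infinity. Concretely, once $\|u\|_\infty$ is controlled via the oscillation bound $\mathrm{osc}\,u < T$ plus the integral identity obtained by dividing the equation by $g(u)$ (as displayed just before the statement), one can continuously deform $N_\lambda$ to an admissible averaging operator and read off $\mathrm{deg}=1$. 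This explains why both \eqref{eq-min1} and \eqref{eq-min2} behave identically despite their very different growth at infinity.

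For $\lambda$ sufficiently large, the intermediate radius $r_0 \in (r,R)$ with vanishing degree would be produced via the classical Mawhin trick of a positive-shift homotopy $u \mapsto u - s\phi_0$ with $\phi_0 > 0$: the largeness of $\lambda$ prevents any solution of the perturbed problem from lying on $\partial B_{r_0}$, forcing $\mathrm{deg}=0$. Additivity of the degree on the annuli $B_{r_0}\setminus\overline{B_r}$ and $B_R\setminus\overline{B_{r_0}}$ then delivers the two positive $T$-periodic solutions. For the non-existence half, I would argue by contradiction, assuming a sequence of positive solutions $u_n$ corresponding to $\lambda_n\to 0^{+}$: the Minkowski bound $|u_n'|<1$ gives $\mathrm{osc}\,u_n < T/2$, and the integrated equation $\int_0^T a(t)g(u_n)\,dt = 0$ combined with $(a_{\#})$ and the continuity of $g$ forces $\|u_n\|_\infty \to 0$; a rescaling $w_n := u_n/\|u_n\|_\infty$ then converges, by \eqref{super0}, to a positive constant solving $\int_0^T a(t)\,dt=0$, contradicting $(a_{\#})$.

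The main obstacle I foresee is the degree computation on large balls: obtaining an a priori bound uniform along the homotopy requires exploiting the interplay between the $L^\infty$ bound on $u'$ built into the operator and the integral identity stemming from $(a_{\#})$, which is precisely where the semilinear techniques need to be replaced by arguments tailored to the Minkowski operator. A secondary technical point is the functional framework itself, since the standard continuation theorems fail directly for $\varphi(\xi)=\xi/\sqrt{1-\xi^{2}}$ because of its bounded domain $(-1,1)$; the product-space extension mentioned in the abstract is the device that circumvents this.
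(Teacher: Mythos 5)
Your overall strategy for the existence half (degree equal on small and large sets, zero degree on an intermediate set for $\lambda$ large, then additivity) mirrors the paper's Theorem~\ref{th-main-ex}, though the functional framework differs: the paper does not work on a space of Lipschitz functions with $\|u'\|_\infty<1$, but instead rewrites $(\mathscr{E}_\lambda)$ as the first-order system \eqref{system} and applies coincidence degree on the product $\mathcal{C}(\mathopen{[}0,T\mathclose{]})\times\mathcal{C}(\mathopen{[}0,T\mathclose{]})$, with the degree evaluated on unbounded cylinders $\Omega_d=B(0,d)\times\mathcal{C}(\mathopen{[}0,T\mathclose{]})$; this is what forces the locally compact extension of Leray--Schauder degree and the non-standard continuation theorem (Appendix~\ref{appendix-B}), since the natural homotopy parameter $\vartheta$ appears only in the second component of the system. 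Your positive-shift homotopy $u\mapsto u-s\phi_0$ for the intermediate radius is also not what the paper does; it instead perturbs the equation by an additive forcing term $\alpha v(t)$ with $v=\mathbbm{1}_{\bigcup_i I_i^+}$, which interacts with the nodal structure $(a_*)$ and a concavity estimate on the intervals $I_i^+$ (Lemma~\ref{lem-rho}).

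The genuine gap is in the non-existence half. You assert that the integrated equation $\int_0^T a(t)g(u_n)\,dt=0$ together with $(a_\#)$ and continuity of $g$ ``forces $\|u_n\|_\infty\to 0$.'' This does not follow. The Minkowski bound controls the oscillation $\max u_n-\min u_n\leq T/2$, but it gives no control on the level $\min u_n$, which could a priori tend to $+\infty$ along a sequence $\lambda_n\to 0^+$. In that regime the identity $\int a\,g(u_n)=0$ is perfectly compatible with $u_n$ nearly constant and large, and the rescaling to a constant profile cannot even get started because it is not known that the level is bounded. To exclude this scenario one needs a quantitative growth condition on $g$ at infinity; the paper uses $(g_\infty'')$, namely $|g'(u)|/g(u)^\eta$ bounded for some $\eta<1$, and argues via the change of variable $z=\varphi(u')/(\lambda g(u))$, bounding $\|z_n\|_\infty$ through the differential inequality \eqref{eq-ineq} and then integrating \eqref{eq-z1} to contradict $(a_\#)$. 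This machinery (and the hypothesis $(g_\infty'')$) is absent from your proposal. Relatedly, even in the regime $\|u_n\|_\infty\to 0$ your rescaling step ``$w_n\to$ positive constant, by \eqref{super0}'' needs a concrete mechanism: the paper multiplies the rescaled equation by $w_n$ and integrates by parts to get $\int_0^T w_n'(t)^2\,dt\to 0$, which is the engine behind the uniform convergence $w_n\to 1$; appealing to \eqref{super0} alone does not produce it.

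A secondary remark: the degree on the large cylinder is not obtained in the paper directly from the identity displayed before Theorem~\ref{th-1.1}, but from an a priori bound on $\|u\|_\infty$ for solutions of the homotopy equation \eqref{eq-3-theta}, proved under $(g_\infty)$ by rescaling (Lemma~\ref{lem-R-ro}) or under $(g_\infty')$ by the $z$-transform (Lemma~\ref{lem-R-C1}); your version would need one of these hypotheses made explicit, since the a priori bound on large balls is \emph{not} automatic from $|u'|<1$ alone (that only controls the oscillation, not the size).
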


The above result, showing that the behavior at infinity is essentially governed by the nonlinear differential operator even for the superlinear power $g(u) = u^{p}$ with $p > 1$, will be proved as a corollary of Theorem~\ref{th-main-ex} and Theorem~\ref{th-main-nex}, dealing with equation \eqref{eq-main} under more general conditions on $a(t)$ and $g(u)$, both for $u$ near zero and for $u$ near infinity. In particular, we will show that the existence of two positive $T$-periodic solutions to \eqref{eq-min2} is valid (when $\lambda$ is large) also for $q=1$, that is, when $g(u)$ has linear growth at infinity.

The proof of Theorem~\ref{th-main-ex} relies on topological degree theory, according to the aforementioned strategy of showing that a suitable topological degree associated with \eqref{eq-main} is equal to $1$ on small and large balls centered at the origin and is equal 
to $0$ (for large values of the parameter $\lambda$) on a ball of intermediate radius. While the needed technical estimates are inspired by the ones in \cite{BoFeZa-16,FeZa-15ade} for the semilinear equation \eqref{eq-sem}, the most delicate point in our strongly nonlinear setting is actually the definition of the degree. In \cite{MaMa-98,Ma-13}, some continuations theorems have been proposed and used for equations driven by nonlinear operators: they all rely on the formulation of \eqref{eq-main} as a fixed point problem on a Banach space of $T$-periodic functions and on a direct use of the Leray--Schauder degree theory. We choose to adopt a different approach: precisely (after having extended the nonlinear term $f_{\lambda}(t,u) = \lambda a(t)g(u)$ to the whole real line in such a way that the positivity of $T$-periodic solutions can be recovered by maximum principle arguments) we write the equation $(\varphi(u'))'+ f_{\lambda}(t,u) = 0$ as the first order planar system 
\begin{equation*}
u' = \varphi^{-1}(v), \quad v' = -f_{\lambda}(t,u),
\end{equation*}
so as to apply Mawhin coincidence degree theory on the product space $\mathcal{C}(\mathopen{[}0,T\mathclose{]}) \times \mathcal{C}(\mathopen{[}0,T\mathclose{]})$. It is worth mentioning two subtleties of this approach. First, we need to evaluate the degree on sets of the type 
$B(0,d) \times \mathcal{C}(\mathopen{[}0,T\mathclose{]})$, with $B(0,d)$ the ball centered at the origin and having radius $d$ in the space $\mathcal{C}(\mathopen{[}0,T\mathclose{]})$; consequently, we need to use the extension of the Leray--Schauder degree theory for locally compact operators. Second, since the natural homotopy $(\varphi(u'))'+ \vartheta f_{\lambda}(t,u) = 0$ (with $0 \leq \vartheta \leq 1$) leads to the system
\begin{equation}\label{sys-intro}
u' = \varphi^{-1}(v), \quad v' = -\vartheta f_{\lambda}(t,u), 
\end{equation}
standard continuation theorems do not apply (notice, indeed, that the parameter $\vartheta$ appears only in the second equation) and we need to use an extension of the classical theory, recently developed in \cite{FeZa-17tmna} and refined in Appendix~\ref{appendix-B}. 

Some numerical simulations illustrating Theorem~\ref{th-main-ex} are depicted in Figure~\ref{fig-01} and Figure~\ref{fig-02}.
It is worth noticing that the bifurcation branches on varying of the parameter $\lambda$ strongly suggest that the ``small'' solution goes to $0$ for $\lambda \to +\infty$, while the ``large'' solutions remains bounded and bounded away from zero. We will indeed carefully discuss this in Section~\ref{section-4}, by identifying suitably topologies in which the convergence of both ``small'' and ``large'' solutions to their limit profiles takes place: in particular, we will prove that the ``large'' solution converge, for $\lambda \to +\infty$ to a (non-zero) Lipschitz function having derivative equal to $0$, $1$ or $-1$ for almost every $t \in \mathbb{R}$.

\begin{figure}[htb]
\centering
\begin{tikzpicture}[scale=1]
\begin{axis}[
  tick label style={font=\scriptsize},
          scale only axis,
  enlargelimits=false,
  xtick={0,1,2,3,4},
  xticklabels={$0$, $\frac{\pi}{2}$, , , $2\pi$},
  ytick={0,4},
  xlabel={\small $t$},
  ylabel={\small $u(t)$},
  max space between ticks=50,
                minor x tick num=1,
                minor y tick num=7,  
every axis x label/.style={
below,
at={(3.5cm,0cm)},
  yshift=-3pt
  },
every axis y label/.style={
below,
at={(0cm,2.5cm)},
  xshift=-3pt},
  y label style={rotate=90,anchor=south},
  width=7cm,
  height=5cm,  
  xmin=0,
  xmax=4,
  ymin=0,
  ymax=4]
\addplot graphics[xmin=0,xmax=4,ymin=0,ymax=4] {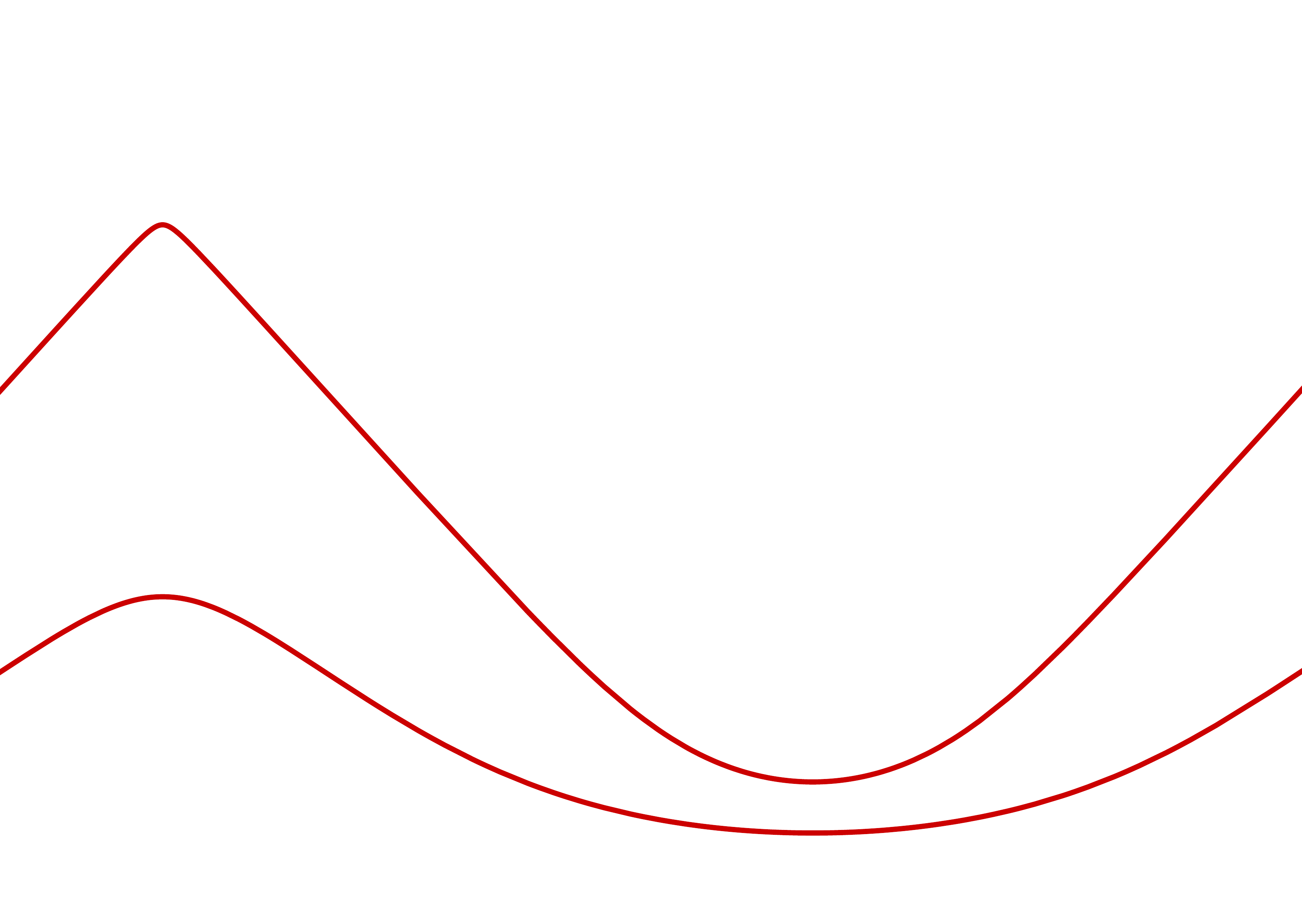};
\end{axis}
\end{tikzpicture} 
\caption{Graphs of two positive $2\pi$-periodic solutions to \eqref{eq-min1} with $p=3$, $a(t)=\cos(t-\pi/4)-\sqrt{2}/2$ and $\lambda=2$. We notice that $a(t)>0$ on $\mathopen{]}0,\pi/2{[}$ and $a(t)<0$ on $\mathopen{]}\pi/2,2\pi{[}$.}         
\label{fig-01}
\end{figure}

\begin{figure}[!htb]
\centering
\begin{tikzpicture}[scale=1]
\begin{axis}[
  tick pos=left,
  tick label style={font=\scriptsize},
          scale only axis,
  enlargelimits=false,
  xtick={0,30},
  ytick={0,4},
  xlabel={\small $\lambda$},
  ylabel={\small $\|u\|_{\infty}$},
  max space between ticks=30,
                minor x tick num=5,
                minor y tick num=7,  
every axis x label/.style={
below,
at={(4cm,0cm)},
  yshift=-3pt
  },
every axis y label/.style={
below,
at={(0cm,2.5cm)},
  xshift=-3pt},
  y label style={rotate=90,anchor=south},
  width=8cm,
  height=5cm,  
  xmin=0,
  xmax=30,
  ymin=0,
  ymax=4]
\addplot graphics[xmin=0,xmax=30,ymin=0,ymax=4] {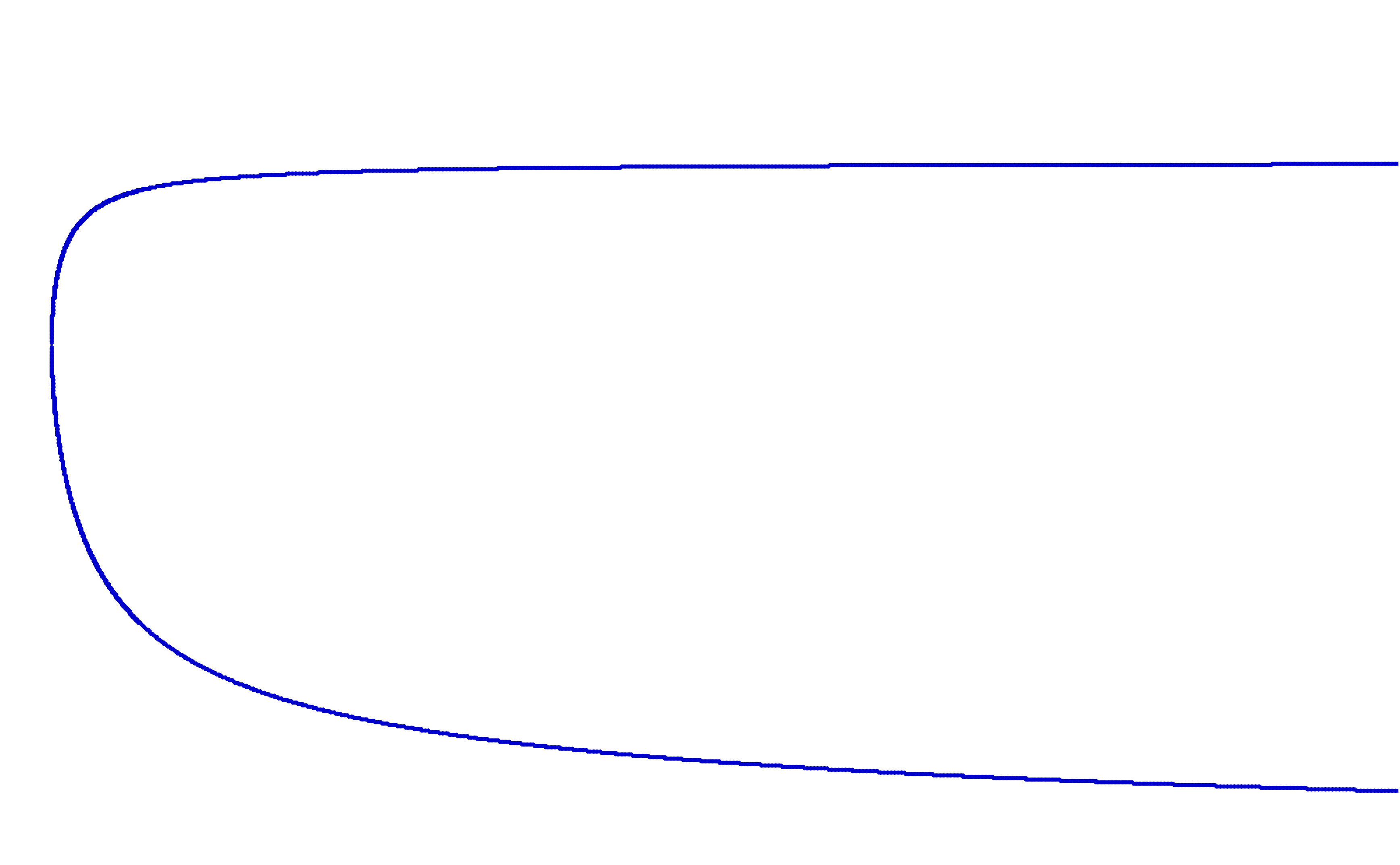};
\end{axis}
\end{tikzpicture}
\caption{Bifurcation diagram with bifurcation parameter $\lambda\in\mathopen{[}0,30\mathclose{]}$ for the $2\pi$-periodic problem associated with \eqref{eq-min1}, setting $p=3$, $a(t)=\cos(t-\pi/4)-\sqrt{2}/2$. 
The $x$-axis represents the parameter $\lambda$ and the $y$-axis the $\sup$-norm of the $2\pi$-periodic solutions to \eqref{eq-min1}. The trivial identically zero solution, which exists for all $\lambda>0$, is not represented.}     
\label{fig-02}
\end{figure}

We mention some very recent results, dealing with equations driven by a nonlinear differential operator and involving an indefinite weight, which are somehow related to Theorem~\ref{th-1.1}. In \cite{BeZa-18} the $T$-periodic problem for equation \eqref{eq-main} is considered, and existence of solutions is proved (via topological degree theory) for the singular nonlinearity $g(u) = -1/u^{\mu}$ for $\mu > 1$ (on a line of research previously developed in \cite{BrTo-10,HaZa-17,Ur-16} in the semilinear case). On the other hand, 
in \cite{LGOmRi-17na,LGOmRi-17jde}, for the Neumann boundary value problem associated with the mean-curvature equation in Euclidean spaces, namely
\begin{equation*}
\Biggl{(} \frac{u'}{\sqrt{1+(u')^{2}}}\Biggr{)}' + \lambda a(t) g(u) = 0,
\end{equation*}
existence/multiplicity of solutions is provided (via bifurcation theory and critical point theory) when $g(u)$ is a power-like function both at zero and at infinity. Notice that in all these results the fact that $a(t)$ is sign-changing is a necessary condition and the mean value condition $(a_{\#})$ plays a role.
\medskip

The second part of our investigation concerns positive subharmonic solutions to \eqref{eq-main}, i.e., solutions with minimal period $kT$ for some integer $k \geq 2$; this is often a quite delicate issue, the crucial point being indeed the proof of the minimality of the period. As far as functional analytic methods are considered, the typical strategy is that of using variational techniques, by finding $kT$-periodic solutions as critical points of the associated action functional and then trying to gain information about their minimal period by level estimates and/or Morse index estimates (see \cite{FoWi-89,SeTaTe-00} and the references therein). Even if we do not exclude that this approach could be successful in our setting, we find more convenient to adopt a completely different method, still relying on the variational structure of the equation but using a celebrated result of Hamiltonian dynamical systems theory: the Poincar\'{e}--Birkhoff fixed point theorem. 
More precisely, we follow the strategy introduced in \cite{BoZa-13} (to which we also refer for a general bibliography on the subject of subharmonic solutions, as well as for a description of the Poincar\'{e}--Birkhoff theorem and its applications in the theory of ODEs) and later refined in \cite{BoFe-18,BoOrZa-14}, dealing with second order equations like $u'' + h(t,u) = 0$ and finding subharmonic solutions once a $T$-periodic solution $u^{*}(t)$ is available. Roughly, the solution $u^{*}(t)$ is required to have a non-trivial Morse index and, under suitable conditions on the behaviour of $h(t,u)$ at infinity, for $k$ large enough many $kT$-periodic solutions $u_{k}(t)$ are found, with a precise number of zeros for $u_{k}(t) - u^{*}(t)$. Using this latter extra information (which intrinsically comes from the Poincar\'{e}--Birkhoff theorem) yields the minimality of the period for a large sub-family of the above $kT$-periodic solutions. As observed many times (compare again with the introduction in \cite{BoZa-13}) such a technique, when applicable, gives much sharper results, compared with the ones obtained with the variational strategy.

In our situation, the ``pivot'' $T$-periodic solution to be chosen is the ``small'' $T$-periodic solution given by degree theory, say $u_{s,\lambda}(t)$. Then, apart from some technical difficulties arising from the presence of the nonlinear differential operator, the key point of the argument consists in proving that the principal eigenvalue $\mu_{0}$ of the $T$-periodic Sturm--Liouville problem 
\begin{equation*}
\bigl{(} \varphi'(u'_{s,\lambda}(t)) w' \bigr{)}' + \bigl{(} \mu + \lambda a(t)g'(u_{s,\lambda}(t))\bigr{)} w = 0
\end{equation*}
is strictly negative when $\lambda$ is large enough. This in turn can be proved (via an algebraic trick inspired by the one introduced in \cite{BrHe-90} and already used in \cite{BoFe-18}) using in an essential way the asymptotic analysis for the solution $u_{s,\lambda}(t)$ as $\lambda \to +\infty$ developed in Section~\ref{section-4}.

Referring for instance to the model equations \eqref{eq-min1} and \eqref{eq-min2}, we have the following result (see Theorem~\ref{th-sub} for a more general version dealing with equation \eqref{eq-main}).

\begin{theorem}\label{th-1.2}
Let $a(t)$ be a sign-changing, continuous and periodic function with minimal period $T$, having a finite number of zeros in $\mathopen{[}0,T\mathclose{]}$ and satisfying condition $(a_{\#})$. Then, there exists $\Lambda^{*} \geq \lambda^{*}$ such that, for every $\lambda > \Lambda^{*}$, both equation \eqref{eq-min1} and equation \eqref{eq-min2} have (two positive $T$-periodic solutions and) a positive periodic solution of minimal period $kT$, for every integer $k$ large enough.
\end{theorem}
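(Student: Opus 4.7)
The plan is to apply the Poincar\'e--Birkhoff fixed point theorem, in the refined form used in \cite{BoFe-18,BoOrZa-14}, taking as pivot the "small" $T$-periodic solution $u_{s,\lambda}$ produced by Theorem~\ref{th-1.1}; this already imposes $\Lambda^{*} \geq \lambda^{*}$. The $kT$-periodic solutions $u_{k}$ will be detected as fixed points of the $k$-th iterate of a suitable Poincar\'e map translated so that $u_{s,\lambda}$ sits at the origin, with prescribed rotation number. The minimality of the period will then follow from a classical coprimality device applied to this rotation number.

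To set the stage, I would rewrite \eqref{eq-main} as the planar Hamiltonian system $u' = \varphi^{-1}(v)$, $v' = -\lambda a(t) g(u)$ (after the same extension of $g$ to the real line used for the $T$-periodic existence result, so that positivity is recoverable by maximum principle) and move the pivot to the origin via $(\tilde u,\tilde v) = (u - u_{s,\lambda},\, v - \varphi(u'_{s,\lambda}))$. Since the flow is area-preserving in the original coordinates, it remains so after translation. For Poincar\'e--Birkhoff to apply one needs two pieces of twist information: \emph{fast} rotation on small circles around the origin and \emph{slow} (or at any rate uniformly bounded) rotation on a large annulus. The slow rotation at large amplitudes is the tamer ingredient, because the Minkowski constraint $|\varphi^{-1}(v)| < 1$ caps the angular speed uniformly, and the indefinite weight hypothesis $(a_{\#})$ together with the global energy-type estimates already used in the proof of Theorem~\ref{th-1.1} confines all orbits that travel far from the pivot to a controlled angular displacement per period.

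The main obstacle is the fast rotation near the pivot. This reduces to showing that, for every $\lambda$ large enough, the principal $T$-periodic eigenvalue $\mu_{0}(\lambda)$ of
\[
\bigl( \varphi'(u'_{s,\lambda}(t)) w' \bigr)' + \bigl( \mu + \lambda a(t) g'(u_{s,\lambda}(t)) \bigr) w = 0
\]
is strictly negative. Once this is granted, Floquet/rotation-number theory yields a strictly positive rotation per period of the linearisation around $u_{s,\lambda}$; iterating $k$ times makes the inner rotation number of the Poincar\'e map exceed any prescribed integer. To prove $\mu_{0}(\lambda)<0$ I would use the algebraic trick of \cite{BrHe-90} as adapted in \cite{BoFe-18}: multiply the equation $(\varphi(u'_{s,\lambda}))' + \lambda a(t)g(u_{s,\lambda}) = 0$ by an appropriate test function built from $u_{s,\lambda}$ (typically a negative power, regularised if necessary), integrate by parts against the variational characterisation of $\mu_{0}$, and obtain an identity whose leading term is proportional to $\int_{0}^{T} \lambda a(t)\bigl( g(u_{s,\lambda}) - u_{s,\lambda} g'(u_{s,\lambda}) \bigr)\,dt$ times a positive weight, with remainders that must be shown to vanish as $\lambda \to +\infty$. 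The superlinearity $(g_{*})$--\eqref{super0} of $g$ at zero guarantees that $g(u) - u g'(u) < 0$ on the range of $u_{s,\lambda}$ for $\lambda$ large; combined with $(a_{\#})$, the sign-changing structure of $a(t)$ reverses and produces a strictly negative leading term. I expect this to be the delicate step, since the non-standard weight $\varphi'(u'_{s,\lambda})$ in the linearisation forces the identity to be closed in a topology compatible with the $\mathcal{C}^{1}$-type convergence of $u_{s,\lambda}\to 0$ furnished by Section~\ref{section-4}.

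Once $\mu_{0}(\lambda)<0$ is secured for $\lambda > \Lambda^{*}$, the twist hypotheses are fulfilled on $[0,kT]$ and the Poincar\'e--Birkhoff theorem provides, for every $k$ large enough and every integer $j$ coprime with $k$ in a range $1 \leq j \leq R_{k}$ that expands with $k$, a $kT$-periodic solution $u_{k,j}$ such that $u_{k,j} - u_{s,\lambda}$ rotates exactly $j$ times around the origin in the $(\tilde u,\tilde v)$-plane over one $kT$-interval. The coprimality $\gcd(j,k)=1$ rules out any period $kT/d$ with $d>1$ dividing $k$, because such a period would make the total rotation a non-trivial multiple of $d$; hence the minimal period of $u_{k,j}$ equals $kT$. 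Positivity of $u_{k,j}$ is inherited, as for the $T$-periodic solutions, from the extension of $g$ to the real line and a maximum-principle argument; taking any $j$ coprime with $k$ in the admissible range yields the desired subharmonic and completes the proof.
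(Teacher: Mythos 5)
Your overall architecture matches the paper's: rewrite $(\mathscr{E}_\lambda)$ as a planar Hamiltonian system, translate so the ``small'' $T$-periodic solution $u_{s,\lambda}$ sits at the origin, verify a twist condition (fast rotation near the pivot from $\mu_0<0$, slow rotation at large amplitude from the bound $|y_1'|<2$ together with the elastic property), invoke Poincar\'e--Birkhoff, and conclude minimality of period from $\gcd(j,k)=1$. This is precisely the strategy of Theorem~\ref{th-sub} and Lemma~\ref{lem-mu}.

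However, the key computational step --- showing $\mu_0(\lambda)<0$ --- is described in a way that would not close. You propose to multiply $(\mathscr{E}_\lambda)$ by a negative power of $u_{s,\lambda}$ and arrive at an identity whose leading term is proportional to $\lambda\int_0^T a(t)\bigl(g(u_{s,\lambda}) - u_{s,\lambda}g'(u_{s,\lambda})\bigr)w_\lambda\,dt$. This is the identity obtained by testing $(\mathscr{E}_\lambda)$ against $w_\lambda$ and the linearised equation against $u_{s,\lambda}$, but it retains the sign-changing weight $a(t)$, and hence carries no usable sign: even though $g - u g' < 0$ near zero, the integral $\int a\,(g - ug')w_\lambda$ has no determinate sign under $(a_\#)$, and the ``reversal'' you claim does not follow. (One can check that for $g(u)=u^p$ this identity collapses to a tautology and yields no information on $\mu_0$.) The paper uses a different algebraic combination: multiply $(\mathscr{E}_\lambda)$ by $g'(u_{s,\lambda})w_\lambda$ and the eigenvalue equation by $g(u_{s,\lambda})$; subtracting makes the $\lambda a(t)$ terms cancel \emph{identically}, leaving
\begin{equation*}
\mu_0 \int_0^T w_\lambda\, g(u_{s,\lambda})
= -\int_0^T \frac{(u_{s,\lambda}')^2}{(1-(u_{s,\lambda}')^2)^{3/2}} g''(u_{s,\lambda}) w_\lambda
- \int_0^T \frac{3(u_{s,\lambda}')^2}{(1-(u_{s,\lambda}')^2)^{5/2}} u_{s,\lambda}''\, g'(u_{s,\lambda}) w_\lambda,
\end{equation*}
and the sign of $\mu_0$ then follows from $g''>0$ near $0$ together with the asymptotic control of Section~\ref{section-4}. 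You should also be explicit about the second integral: it is the extra term produced by $\varphi''\neq 0$, and it is here that one needs both the bound $|u_{s,\lambda}''(t)|\le C|a(t)|$ from Theorem~\ref{th-conv-s2} (requiring $a\in L^\infty_T$ and $(g_0'')$) and the hypothesis $g'(u)/g''(u)\to 0$ as $u\to 0^+$, so that the second integral is dominated in absolute value by the first. Without this dominance argument the proof does not conclude, and your proposal does not isolate it.

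A smaller point: the outer twist in the paper does not invoke $(a_\#)$ or energy estimates; it is a purely kinematic argument combining the elastic property ($\ell(t)\ge kT$ on $[0,kT]$ for data far from the pivot) with the uniform bound $|y_1'|<2$, which makes a full clockwise turn impossible. Your phrasing suggests a different (and unnecessary) route there.
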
 

\medskip

The plan of the paper is the following. In Section~\ref{section-2} we introduce the abstract degree setting and we prove two semi-abstract lemmas for the computation of the degree. By taking advantage of these results, in Section~\ref{section-3} we state and prove our results for the existence of positive $T$-periodic solutions; a non-existence result is proposed too. Section~\ref{section-4} deals with the asymptotic properties of the $T$-periodic solutions when $\lambda \to +\infty$. In Section~\ref{section-5} we finally prove the existence of positive subharmonic solutions, via the Poincar\'{e}--Birkhoff fixed point theorem. The paper ends with three appendices: in Appendix~\ref{appendix-A} we provide some maximum principles for equations driven by general $\varphi$-laplacian operators, in Appendix~\ref{appendix-B} we present a continuation theorem for the planar system \eqref{sys-intro}, in Appendix~\ref{appendix-C} we prove a dynamical characterization of the principal eigenvalue of Sturm--Liouville operators.

\subsection{Basics facts, main assumptions and notation}\label{section-1.1}

We conclude this introductory section with a list of notation and hypotheses that will be assumed along the paper.

We use the standard notation $\mathcal{C}^{k}$, $L^{p}$, $W^{k,p}$ (where $k$ is a non-negative integer and $1 \leq p \leq \infty$) for functions spaces, endowed with the usual norms; in all cases, a subscript $T$ means that the corresponding functions are $T$-periodic.  

Using the definition in \eqref{phi-L}, equation \eqref{eq-main} is written in a compact notation as 
\begin{equation*}
(\varphi(u'))' + \lambda a(t) g(u) = 0.
\leqno{(\mathscr{E}_{\lambda})}
\end{equation*}
Let us recall that $a\colon\mathbb{R}\to\mathbb{R}$ is a $T$-periodic and locally integrable weight, and $g\colon \mathbb{R}^{+} \to \mathbb{R}^{+}$ is a continuous function. Accordingly, solutions to $(\mathscr{E}_{\lambda})$ are meant in the Carath\'{e}odory sense, namely as continuously differentiable functions $u(t)$ such that the map $t \mapsto \varphi(u'(t))$ is locally absolutely continuous and the differential equation is satisfied almost everywhere. 
Actually, since $\varphi^{-1}$ is smooth, using the chain rule for Sobolev functions (see, for instance, \cite[Corollary~8.11]{Br-11}) it is easy to see that $u'(t)$ is locally absolutely continuous, with 
\begin{equation}\label{eq-usecondo}
u''(t) = - \lambda a(t) g(u(t)) (1-u'(t)^{2})^{\frac{3}{2}}, \quad \text{for a.e.~$t\in\mathbb{R}$.}
\end{equation}
In particular, a solution $u(t)$ is of class $\mathcal{C}^{2}$ whenever $a(t)$ is a continuous function.

We now collect, for the reader's convenience, the main assumptions on $a(t)$ and $g(u)$ which will be used along the paper. Besides the already mentioned 
\begin{itemize}[leftmargin=40pt,labelsep=22pt,itemsep=6pt]
\item [$(g_{*})$] $g(0) = 0$ and $g(u) > 0$ for $u>0$,
\item [$(a_{\#})$] $\displaystyle \int_{0}^{T} a(t) \,dt <0$, \vspace{3pt}
\end{itemize}
we always assume the following technical condition on the nodal behavior of $a(t)$:
\begin{itemize}[leftmargin=40pt,labelsep=22pt,itemsep=6pt]
\item [$(a_{*})$]
there exist $m\geq 1$ intervals $I^{+}_{1},\ldots,I^{+}_{m}$, closed and pairwise disjoint in the quotient space $\mathbb{R}/T\mathbb{Z}$, such that
\begin{align*}
&\qquad\qquad a(t)\geq0, \; \text{ for a.e.~$t\in I^{+}_{i}$,} \quad a(t)\not\equiv0 \; \text{ on $I^{+}_{i}$,} \quad \text{for $i=1,\ldots,m$,} \\
&\qquad\qquad a(t)\leq0, \; \text{ for a.e.~$t\in(\mathbb{R}/T\mathbb{Z})\setminus\bigcup_{i=1}^{m}I^{+}_{i}$.}
\end{align*}
\end{itemize}
Moreover, as for $g(u)$ we assume one of the following conditions near zero
\begin{itemize}[leftmargin=40pt,labelsep=22pt,itemsep=6pt]
\item [$(g_{0})$] $\displaystyle \lim_{u\to 0^{+}} \dfrac{g(u)}{u} = 0$ and $\displaystyle \lim_{\substack{u\to0^{+} \\ \omega\to1}}\dfrac{g(\omega u)}{g(u)}=1$,
\item [$(g_{0}')$] there exists $\varepsilon>0$ such that $g\in\mathcal{C}^{1}(\mathopen{[}0,\varepsilon\mathclose{[})$ and $g'(0)=0$,
\item [$(g_{0}'')$] there exist $p>1$ and $c_{p}\in \mathopen{]}0,+\infty \mathclose{[}$ such that $ \displaystyle \lim_{u\to0^{+}} \dfrac{g(u)}{u^{p}} = c_{p}$,
\end{itemize}
and one the following conditions at infinity
\begin{itemize}[leftmargin=40pt,labelsep=22pt,itemsep=6pt]
\item [$(g_{\infty})$] $\displaystyle\lim_{\substack{u\to+\infty \\ \omega\to1}}\dfrac{g(\omega u)}{g(u)}=1$,
\item [$(g_{\infty}')$] there exists $\varepsilon>0$ such that $g\in\mathcal{C}^{1}(\mathopen{]}1/\varepsilon,+\infty\mathclose{[})$ and $\displaystyle \lim_{u\to +\infty} \dfrac{g'(u)}{g(u)} = 0$.
\end{itemize}
Some comments about the above conditions are in order. 

Hypotheses $(g_{0})$ and $(g_{0}')$ are alternative assumptions for the behavior of $g(u)$ near zero appearing in our existence result (see Theorem~\ref{th-main-ex}); they have already been used in \cite{BoFeZa-16,FeZa-15ade}. They both imply a superlinear behavior for $g(u)$ as $u \to 0^{+}$ (that is, \eqref{super0} is satisfied). However, while $(g_{0}')$ requires $g(u)$ to be continuously differentiable in a neighborhood of zero, such a regularity condition is dropped in $(g_{0})$, at the expenses of a condition of so-called regular oscillation (that is, the second condition in $(g_{0})$). It can be seen that, in fact, $(g_{0})$ and $(g_{0}')$ are independent (see \cite[Remark~3.3]{FeZa-15ade}, to which we also refer for the notion of regularly oscillating functions and its applications in different contexts). When dealing with subharmonic solutions, we will need the slightly stronger condition $(g_{0}'')$, that is, we require a power-like behavior (of sublinear type) near zero. It is easy to see that $(g_{0}'')$ implies $(g_{0})$; moreover, of course, $(g_{0}'')$ implies $(g_{0}')$ when $g(u)$ is continuously differentiable in a neighborhood of zero.

As for the behavior of $g(u)$ at infinity, we assume either $(g_{\infty})$ or $(g_{\infty}')$. These conditions could appear quite unrelated at first sight; however, they cover most of the functions having a power-like behavior (of either sublinear, linear or superlinear type). Indeed, $(g_{\infty})$ simply requires a regularly oscillating behavior at infinity (and no growth assumptions at all; in particular, any function $g(u)$ satisfying $g(u) \sim K u^{\delta}$ for some $K, \delta > 0$ satisfies $(g_{\infty})$), while $(g_{\infty}')$ asks for the logarithmic derivative of $g(u)$ to be infinitesimal at infinity, a condition which appears pretty natural. It can be seen, however, that they are independent, even when $g(u)$ is of class $\mathcal{C}^{1}$: for instance, the function $g(u) = 2u^{3} + \sin(u^{2})u^{2}$ satisfies $(g_{\infty})$ but not $(g_{\infty}')$, while the function $g(u) = e^{-\sqrt{u}}/u$ satisfies $(g_{\infty}')$ but not $(g_{\infty})$. 

It is worth observing that the model nonlinearities considered in equations \eqref{eq-min1} and \eqref{eq-min2} satisfy all the above assumptions.

In the sequel, it is convenient to extend the nonlinearity $\lambda a(t) g(u)$ appearing in $(\mathscr{E}_{\lambda})$ to the whole real line, by setting
\begin{equation}\label{def-fl}
f_{\lambda}(t,u) := 
\begin{cases}
\, -u, & \text{if $u\leq 0$,} \\
\, \lambda a(t) g(u), & \text{if $u\geq 0$.}
\end{cases}
\end{equation}
In this manner, by the weak maximum principle in Corollary~\ref{cor-weak-max-principle}, any $T$-periodic solution to the equation 
\begin{equation}\label{eq-fl}
(\varphi(u'))' + f_{\lambda}(t,u) = 0
\end{equation} 
is non-negative, thus solving $(\mathscr{E}_{\lambda})$.

\section{Abstract degree setting}\label{section-2}

In this section we introduce the abstract setting of the coincidence degree, in order to deal with the $T$-periodic problem associated with $(\mathscr{E}_{\lambda})$. In what follows, we just assume $a\in L^{1}_{T}$ and $g\in\mathcal{C}(\mathbb{R}^{+})$ with $g(0)=0$.

We proceed in the same spirit of \cite{FeZa-17tmna}, by writing equation \eqref{eq-fl}
as the equivalent first order system
\begin{equation}\label{system}
\begin{cases}
\, x_{1}' = \varphi^{-1}(x_{2}) \\
\, x_{2}' = -f_{\lambda}(t,x_{1}).
\end{cases}
\end{equation}
In what follows, by a $T$-periodic solution to \eqref{system} we mean a vector function $x=(x_{1},x_{2})$ such that, for $i=1,2$, $x_{i}\colon \mathopen{[}0,T\mathclose{]} \to \mathbb{R}$ is an absolutely continuous function (i.e., $x_{i} \in W^{1,1}(\mathopen{[}0,T\mathclose{]}))$ satisfying \eqref{system} for a.e.~$t\in\mathopen{[}0,T\mathclose{]}$ and such that $x_{i}(0)=x_{i}(T)$. As is well known, any $T$-periodic solution to \eqref{system} according to the above definition can be extended to the whole real line to a locally absolutely continuous solution to \eqref{system} such that $x(t+T) = x(t)$ for all $t\in\mathbb{R}$. 

For $i=1,2$, let $X_{i}:= \mathcal{C}(\mathopen{[}0,T\mathclose{]})$ be the Banach space of continuous functions $x_{i} \colon \mathopen{[}0,T\mathclose{]} \to \mathbb{R}$,
endowed with the $\sup$-norm $\|x_{i}\|_{\infty} := \max_{t\in \mathopen{[}0,T\mathclose{]}} |x_{i}(t)|$,
and let $Z_{i}:=L^{1}(\mathopen{[}0,T\mathclose{]})$ be the Banach space of Lebesgue integrable functions $z_{i} \colon \mathopen{[}0,T\mathclose{]} \to \mathbb{R}$, endowed with the norm $\|z_{i}\|_{L^{1}_{T}}:= \int_{0}^{T} |z_{i}(t)|\,dt$.
Next, we define the Banach spaces $X := X_{1}\times X_{2} = \mathcal{C}(\mathopen{[}0,T\mathclose{]},\mathbb{R}^{2})$ and $Z := Z_{1}\times Z_{2} = L^{1}(\mathopen{[}0,T\mathclose{]},\mathbb{R}^{2})$ endowed with the standard norms.

For $i=1,2$, we consider the linear differential operator $L_{i}\colon \mathrm{dom}\,L_{i} \to Z_{i}$ defined as
\begin{equation*}
(L_{i}x_{i})(t):= x'_{i}(t), \quad t\in\mathopen{[}0,T\mathclose{]},
\end{equation*}
where $\mathrm{dom}\,L_{i} := \{ x_{i} \in W^{1,1}(\mathopen{[}0,T\mathclose{]}) \colon x_{i}(0) = x_{i}(T)\} \subseteq X_{i}$.
As is well known, $L_{i}$ is a Fredholm map of index zero, $\ker L_{i}$ and $\mathrm{coker}\,L_{i}$ are made up of constant functions and
\begin{equation*}
\mathrm{Im}\,L_{i} = \biggl{\{} z_{i}\in Z_{i} \colon \int_{0}^{T} z_{i}(t)\,dt = 0 \biggr{\}}.
\end{equation*}
We define the projectors $P_{i} \colon X_{i} \to \ker L_{i}$ and $Q_{i} \colon Z_{i} \to \mathrm{coker}\,L_{i}$ as the average operators
\begin{equation*}
P_{i}x_{i} = Q_{i}x_{i} := \dfrac{1}{T}\int_{0}^{T} x_{i}(t)~\!dt.
\end{equation*}
Let $K_{i} \colon \mathrm{Im}\,L_{i} \to \mathrm{dom}\,L_{i} \cap \ker P_{i}$ be the right inverse of $L_{i}$, namely, given $z_{i}\in Z_{i}$ with $\int_{0}^{T} z_{i}(t)~\!dt =0$, then $x_{i} = K_{i} z_{i}$ is the unique solution of
\begin{equation*}
x_{i}'= z_{i}(t), \quad \text{with $\int_{0}^{T} x_{i}(t)~\!dt = 0$,}
\end{equation*}
which clearly satisfies the boundary condition $x_{i}(0) = x_{i}(T)$. Next, as linear orientation-preserving isomorphism $J_{i} \colon \mathrm{coker}\,L_{i} \to \ker L_{i}$ we take the identity map in $\mathbb{R}$.

Now we define the nonlinear operator $N_{\lambda} \colon X \to Z$ as
the Nemytskii operator induced by the functions $\varphi^{-1}$ and $-f_{\lambda}$, namely
the operator $N_{\lambda}$ has the following components
\begin{equation*}
\begin{cases}
\, N_{\lambda,1}(x_{2})(t):= \varphi^{-1}(x_{2}(t)), \\
\, N_{\lambda,2}(x_{1})(t) := - f_{\lambda}(t,x_{1}(t)),
\end{cases}
\end{equation*}
where $x=(x_{1},x_{2})\in X$ and $t\in\mathopen{[}0,T\mathclose{]}$.
We observe that $N_{\lambda}$ is an $L$-completely continuous operator. Indeed, setting $Q:=(Q_{1},Q_{2})$ and $K:=(K_{1},K_{2})$, we have that $N_{\lambda}$ and $K(\mathrm{Id}_{Z}-Q)N_{\lambda}$ are continuous and $QN_{\lambda}(B)$ and $K(\mathrm{Id}_{Z}-Q)N_{\lambda}(B)$ are relatively compact sets, for each bounded set $B\subseteq X$.

With that position and setting $L :=(L_{1},L_{2})\colon \mathrm{dom}\,L \to Z$ with $\mathrm{dom}\,L:=\mathrm{dom}\,L_{1}\times\mathrm{dom}\,L_{2}$, system \eqref{system} can be written as a coincidence equation
\begin{equation*}
Lx = N_{\lambda}x,\quad x\in \mathrm{dom}\,L,
\end{equation*}
From Mawhin's coincidence degree theory, one can see that the coincidence equation is equivalent to the fixed point problem
\begin{equation*}
x = \Phi_{\lambda}(x), \quad x\in X,
\end{equation*}
where $\Phi_{\lambda} \colon X \to X$ is defined as $\Phi_{\lambda}=(\Phi_{\lambda,1},\Phi_{\lambda,2})$, with $\Phi_{\lambda,i} \colon X \to X_{i}$ for $i=1,2$ given by
\begin{equation*}
\Phi_{\lambda,i}(x):= P_{i}x_{i} + J_{i}Q_{i}N_{\lambda,i}x + K_{i}(\mathrm{Id}_{Z_{i}}-Q_{i})N_{\lambda,i}x, \quad x=(x_{1},x_{2})\in X.
\end{equation*}
We stress that, under the above assumptions, $\Phi_{\lambda}$ is a completely continuous operator.

We can now give the definition of coincidence degree. Let $\Omega\subseteq X$ be an open (possibly unbounded) set such that the solution set
\begin{equation*}
\mathrm{Fix}\,(\Phi_{\lambda},\Omega):= \bigl{\{}x\in {\Omega} \colon x =
\Phi_{\lambda}x \bigr{\}} = \bigl{\{}x\in {\Omega}\cap \mathrm{dom}\,L \colon Lx = N_{\lambda}x\bigr{\}}
\end{equation*}
is compact. The \textit{coincidence degree} $\mathrm{D}_{L}(L-N_{\lambda},\Omega)$ of $L$ and $N_{\lambda}$ in $\Omega$ is defined as
\begin{equation*}
\mathrm{D}_{L}(L-N_{\lambda},\Omega):= \mathrm{deg}_{\mathrm{LS}}(\mathrm{Id}_{X} - \Phi_{\lambda},\Omega,0),
\end{equation*}
where $\mathrm{deg}_{\mathrm{LS}}$ denotes the extension of the Leray--Schauder degree
for locally compact maps defined on open (possibly unbounded) sets (cf.~\cite{Gr-72,Ma-99,Nu-85,Nu-93}). More precisely, given an open bounded set $\mathcal{V}$ with
\begin{equation*}
\mathrm{Fix}\,(\Phi_{\lambda},\Omega) \subseteq \mathcal{V} \subseteq \overline{\mathcal{V}} \subseteq \Omega,
\end{equation*}
we can define
\begin{equation*}
\mathrm{deg}_{\mathrm{LS}}(\mathrm{Id}_{X} - \Phi_{\lambda},\Omega,0):=\mathrm{deg}_{\mathrm{LS}}(\mathrm{Id}_{X} - \Phi_{\lambda},\mathcal{V},0)
\end{equation*}
(the definition is independent of the choice of $\mathcal{V}$).
From the main properties of the Leray--Schauder topological degree, one can derive the corresponding ones for the extension of the coincidence degree.
\begin{itemize}
\item \textit{Additivity.}
Let $\Omega_{1}$, $\Omega_{2}$ be open and disjoint subsets of $\Omega$ such that $\mathrm{Fix}\,(\Phi_{\lambda},\Omega)\subseteq \Omega_{1}\cup\Omega_{2}$.
Then
\begin{equation*}
\mathrm{D}_{L}(L-N_{\lambda},\Omega) = \mathrm{D}_{L}(L-N_{\lambda},\Omega_{1})+ \mathrm{D}_{L}(L-N_{\lambda},\Omega_{2}).
\end{equation*}
\item \textit{Excision.}
Let $\Omega_{0}$ be an open subset of $\Omega$ such that $\mathrm{Fix}\,(\Phi_{\lambda},\Omega)\subseteq \Omega_{0}$.
Then
\begin{equation*}
\mathrm{D}_{L}(L-N_{\lambda},\Omega)=\mathrm{D}_{L}(L-N_{\lambda},\Omega_{0}).
\end{equation*}
\item \textit{Existence theorem.}
If $\mathrm{D}_{L}(L-N_{\lambda},\Omega)\neq0$, then $\mathrm{Fix}\,(\Phi_{\lambda},\Omega)\neq\emptyset$,
hence there exists $u\in {\Omega}\cap \mathrm{dom}\,L$ such that $Lu = N_{\lambda}u$.
\item \textit{Homotopic invariance.}
Let $H\colon\mathopen{[}0,1\mathclose{]}\times \Omega \to X$, $H_{\vartheta}(u) := H(\vartheta,u)$, be a continuous homotopy such that
\begin{equation*}
\Sigma:=\bigcup_{\vartheta\in\mathopen{[}0,1\mathclose{]}} \bigl{\{}u\in \Omega\cap \mathrm{dom}\,L \colon Lu=H_{\vartheta}u\bigr{\}}
\end{equation*}
is a compact set and there exists an open neighborhood $\mathcal{W}$ of $\Sigma$ such that $\overline{\mathcal{W}}\subseteq \Omega$ and
$(K(\mathrm{Id}_{Z}-Q)H)|_{\mathopen{[}0,1\mathclose{]}\times\overline{\mathcal{W}}}$ is a compact map.
Then the map $\vartheta\mapsto \mathrm{D}_{L}(L-H_{\vartheta},\Omega)$ is constant on $\mathopen{[}0,1\mathclose{]}$.
\end{itemize}
For more details and proofs, we refer the reader to \cite{BoFeZa-16,Fe-18,GaMa-77,Ma-79,Ma-93} and the references therein.

In our applications, given an open set $\mathcal{O}\subseteq X$ and an $L$-completely continuous operator $\mathcal{N}$, in order to prove that $\mathrm{D}_{L}(L-\mathcal{N},\mathcal{O})$ is well-defined or equivalently that the set $\bigl{\{}x\in \mathcal{O}\cap \mathrm{dom}\,L \colon Lx = \mathcal{N}x\bigr{\}}$ is compact in $X$, we proceed in this manner. We consider the set $\{x\in \overline{\mathcal{O}}\cap \mathrm{dom}\,L\colon Lx = {\mathcal N} x\}$ and we prove that it is bounded (and thus compact, by the $L$-complete continuity of $\mathcal{N}$) and disjoint from $\partial\mathcal{O}$. When dealing with homotopies, the strategy is analogous.

\medskip

We are now in a position to present two lemmas that allow us to compute the degree on open and unbounded sets of $X$ of the form
\begin{equation}\label{eq-def-omega}
\Omega_{d} := B(0,d) \times \mathcal{C}(\mathopen{[}0,T\mathclose{]}) \subseteq X,
\end{equation}
where $B(0,d)\subseteq X_{1}$ is the open ball centered at zero and with radius $d>0$.

The first one, relying on an extension of the classical Mawhin's continuation theorem illustrated in Appendix~\ref{appendix-B}, is a criteria for non-zero degree in $\Omega_{d}$.

\begin{lemma}\label{lem-deg1}
Let $a \colon \mathbb{R} \to \mathbb{R}$ be a locally integrable $T$-periodic function satisfying $(a_{\#})$, let $g \colon \mathbb{R}^{+} \to \mathbb{R}^{+}$ be a continuous function satisfying $(g_{*})$, and $\lambda>0$. Let $d>0$ and assume that the following property holds.
\begin{itemize}
\item[$(H_{1})$]
If $\vartheta\in \mathopen{]}0,1\mathclose{]}$ and $u(t)$ is a non-negative $T$-periodic solution to
\begin{equation}\label{eq-lem-deg1}
(\varphi(u'))' + \vartheta \lambda a(t) g(u) = 0,
\end{equation}
then $\|u\|_{\infty} \neq d$.
\end{itemize}
Then, it holds that
\begin{equation*}
\mathrm{D}_{L}(L-N_{\lambda},\Omega_{d}) = -1.
\end{equation*}
\end{lemma}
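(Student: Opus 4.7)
The plan is to apply the continuation theorem from Appendix~\ref{appendix-B} along the natural homotopy
\begin{equation*}
\mathcal{N}_{\vartheta}(x_{1},x_{2})(t) := \bigl( \varphi^{-1}(x_{2}(t)),\, -\vartheta f_{\lambda}(t,x_{1}(t)) \bigr), \quad \vartheta \in \mathopen{[}0,1\mathclose{]},
\end{equation*}
for which $\mathcal{N}_{1} = N_{\lambda}$. By the weak maximum principle from Appendix~\ref{appendix-A}, any $T$-periodic solution of $(\varphi(u'))' + \vartheta f_{\lambda}(t,u) = 0$ is non-negative, hence also solves $(\varphi(u'))' + \vartheta \lambda a(t) g(u) = 0$; assumption $(H_{1})$ then rules out solutions with $\|x_{1}\|_{\infty} = d$ for every $\vartheta \in \mathopen{]}0,1\mathclose{]}$. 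To apply the theorem in the unbounded set $\Omega_{d}$, one additionally needs compactness of the solution set: the Minkowski bound gives $|x_{1}'| < 1$ automatically, while $\|x_{2}'\|_{L^{1}} = \vartheta \|f_{\lambda}(\cdot,x_{1}(\cdot))\|_{L^{1}}$ is uniformly bounded on $\overline{\Omega_{d}}$, and the mean of $x_{2}$ is pinned down by the constraint $\int_{0}^{T} \varphi^{-1}(x_{2}(t))\,dt = 0$ inherited from the periodicity of $x_{1}$, using strict monotonicity of $\varphi^{-1}$.

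The continuation theorem then identifies $\mathrm{D}_{L}(L-N_{\lambda},\Omega_{d})$ with the Brouwer degree on $\Omega_{d} \cap \ker L \cong \mathopen{]}-d,d\mathclose{[} \times \mathbb{R}$ of the averaged map
\begin{equation*}
G(\alpha,\beta) := \Bigl( -\varphi^{-1}(\beta),\; \tfrac{1}{T}\int_{0}^{T} f_{\lambda}(t,\alpha)\,dt \Bigr).
\end{equation*}
Using $(g_{*})$ and $(a_{\#})$, the second component of $G$ equals $\tfrac{\lambda g(\alpha)}{T}\int_{0}^{T} a(t)\,dt < 0$ for $\alpha > 0$, equals $-\alpha > 0$ for $\alpha < 0$, and vanishes only at $\alpha = 0$; the first component vanishes only at $\beta = 0$. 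Thus $(0,0)$ is the unique zero of $G$ in $\Omega_{d}\cap\ker L$, and by excision the computation reduces to a small neighborhood of the origin.

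A convex homotopy from $G$ to the linear model $\widetilde{G}(\alpha,\beta) = (-\beta,-\alpha)$ preserves the sign structure of each component on either side of the origin, hence introduces no extra zeros. Since $\widetilde{G}$ is linear with Jacobian having determinant $-1$, the local Brouwer degree at $(0,0)$ equals $-1$, giving $\mathrm{D}_{L}(L-N_{\lambda},\Omega_{d}) = -1$.

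The main obstacle I foresee is the verification of the compactness and a priori bound conditions in the degenerate limit $\vartheta \to 0^{+}$, where the second equation reduces to $x_{2}' = 0$ and the system admits an entire line of constant solutions in $\ker L$, so that standard continuation theorems fail; circumventing this precisely motivates the refined framework of Appendix~\ref{appendix-B}, which extends the Mawhin scheme to planar systems whose parameter appears only in one component.
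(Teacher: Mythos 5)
Your proposal is correct and follows the same strategy as the paper: apply the weak maximum principle to reduce to non-negative solutions, use the a priori bound on $x_2$ (you derive it from $\int_0^T \varphi^{-1}(x_2)=0$ forcing $x_2$ to vanish somewhere; the paper picks $t_0$ with $x_1'(t_0)=0$ directly, but both give the same $L^\infty$ bound), invoke $(H_1)$ to stay inside $\Omega_d$, and then deploy the continuation theorem of Appendix~\ref{appendix-B}. The only cosmetic difference is in the last step: Theorem~\ref{th-B.1} already reduces the coincidence degree to the \emph{one-dimensional} Brouwer degree of $f_\lambda^\#$ on $\mathopen{]}-d,d\mathclose{[}$, which the paper evaluates simply from $f_\lambda^\#(d)<0<f_\lambda^\#(-d)$, whereas you compute the intermediate two-dimensional degree of the averaged pair by homotoping to a linear model; both give $-1$, and your convex homotopy argument is valid because each component keeps a fixed sign on either side of the origin throughout.
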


\begin{proof}
We study the equation
\begin{equation}\label{eq-2.ftheta}
(\varphi(u'))' + \vartheta f_{\lambda}(t,u) = 0,
\end{equation}
for $\vartheta\in\mathopen{]}0,1\mathclose{]}$, which can be equivalently written as the system
\begin{equation}\label{system-theta}
\begin{cases}
\, x_{1}' = \varphi^{-1}(x_{2}) \\
\, x_{2}' = -\vartheta f_{\lambda}(t,x_{1})
\end{cases}
\end{equation}
We are going to prove hypotheses $(i)$ and $(ii)$ of Theorem~\ref{th-B.1}.

Let $\vartheta\in\mathopen{]}0,1\mathclose{]}$ and let $\mathcal{S}_{\vartheta}$ be the set of $T$-periodic solutions to \eqref{system-theta} in $\overline{\Omega_{d}}$. We notice that $x = (x_{1},x_{2})\in \mathcal{S}_{\vartheta}$ if and only if $x_{1}(t)$ is a $T$-periodic solution of \eqref{eq-2.ftheta} such that $\|x_{1}\|_{\infty}\leq d$ and $x_{2}(t)=\varphi(x_{1}'(t))$ for all $t\in\mathopen{[}0,T\mathclose{]}$. By the maximum principle in Corollary~\ref{cor-weak-max-principle}, we find that $x_{1}(t)\geq 0$ for all $t\in \mathbb{R}$ and, indeed, $x_{1}(t)$ solves \eqref{eq-lem-deg1}. Next, for all $t\in\mathopen{[}0,T\mathclose{]}$, it holds that
\begin{equation*}
|x_{2}(t)| = |x_{2}(t_{0})| + \biggl{|} \int_{t_{0}}^{t} x_{2}'(\xi) \,d\xi \biggr{|} \leq \lambda \|a\|_{L^{1}_{T}} \max_{u\in\mathopen{[}0,d\mathclose{]}} g(u), \quad \text{for all $t\in\mathopen{[}0,T\mathclose{]}$,}
\end{equation*}
by choosing $t_{0}\in\mathopen{[}0,T\mathclose{]}$ such that $x_{1}'(t_{0})=0$ and so $x_{2}(t_{0})=\varphi(x_{1}'(t_{0}))=0$.
As a consequence, for any $\vartheta\in\mathopen{]}0,1\mathclose{]}$,
\begin{equation*}
\mathcal{S}_{\vartheta} \subseteq B := \biggl{\{} x = (x_{1},x_{2}) \in \overline{\Omega_{d}}\colon \|x_{1}\|_{\infty}\leq d, \, \| x_{2} \|_{\infty}
 \leq \lambda \|a\|_{L^{1}_{T}} \max_{u\in\mathopen{[}0,d\mathclose{]}} g(u) \biggr{\}}.
\end{equation*}
Moreover, via condition $(H_{1})$, we deduce that $\mathcal{S}_{\vartheta}\subseteq \Omega_{d}$.
Therefore, condition $(i)$ of Theorem~\ref{th-B.1} holds.

Secondly, we analyse the validity of condition $(ii)$ of Theorem~\ref{th-B.1}. We notice that the set $B(0,d)\cap\mathbb{R}=\mathopen{]}-d,d\mathclose{[}$ is bounded and so condition $(ii)$ can be reduced to verify that $f^{\#}_{\lambda}$ has no zeros on $\partial(B(0,d)\cap\mathbb{R})=\{\pm d\}$.
Now, from
\begin{equation*}
f_{\lambda}^{\#}(s) = \dfrac{1}{T}\int_{0}^{T} f_{\lambda}(t,s)~\!dt =
\begin{cases}
\, -s, & \text{if $s \leq 0$,} \\
\, \lambda \biggl{(}\dfrac{1}{T} \displaystyle \int_{0}^{T} a(t) \,dt \biggr{)} g(s), & \text{if $s \geq 0$,}
\end{cases}
\end{equation*}
and hypothesis $(a_{\#})$, we have that $f^{\#}_{\lambda}(s)s < 0$ for $s \neq 0$ and immediately $(ii)$ holds.

Now, observing that $f_{\lambda}^{\#}(d)<0<f_{\lambda}^{\#}(-d)$, an application of Theorem~\ref{th-B.1} yields
\begin{equation*}
\mathrm{D}_{L}(L-N_{\lambda},\Omega_{d}) = \mathrm{deg}_{\mathrm{B}}(f^{\#}_{\lambda},\mathopen{]}-d,d\mathclose{[},0) = -1.
\end{equation*}
This concludes the proof.
\end{proof}

The second lemma states a criteria for zero degree in $\Omega_{d}$ (notice that, contrarily to Lemma~\ref{lem-deg1}, here $(a_{\#})$ and $(g_{*})$ are not needed). 

\begin{lemma}\label{lem-deg0}
Let $a \colon \mathbb{R} \to \mathbb{R}$ be a locally integrable $T$-periodic function, let $g \colon \mathbb{R}^{+} \to \mathbb{R}^{+}$ be a continuous function satisfying $g(0) = 0$, and $\lambda>0$. 
Let $d>0$ and assume that there exists $v \in L^{1}(\mathopen{[}0,T\mathclose{]})$, with $v\not\equiv0$, such that the following properties hold.
\begin{itemize}
\item[$(H_{2})$]
If $\alpha \geq 0$ and $u(t)$ is a non-negative $T$-periodic solution to
\begin{equation}\label{eq-lem-deg0}
(\varphi(u'))' + \lambda a(t) g(u) + \alpha v(t) = 0,
\end{equation}
then $\|u\|_{\infty}\neq d$.
\item[$(H_{3})$]
There exists $\alpha_{0} \geq 0$ such that equation \eqref{eq-lem-deg0}, with $\alpha=\alpha_{0}$, has no non-negative $T$-periodic solutions $u(t)$ with $\|u\|_{\infty}\leq d$.
\end{itemize}
Then, it holds that
\begin{equation*}
\mathrm{D}_{L}(L-N_{\lambda},\Omega_{d}) = 0.
\end{equation*}
\end{lemma}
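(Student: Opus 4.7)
The natural strategy is to use homotopy invariance of the coincidence degree to deform $N_\lambda$ along the one-parameter family induced by the perturbation $\alpha v(t)$, ending at a map with no fixed points in $\overline{\Omega_d}$. Concretely, I would introduce, for $\vartheta \in [0,1]$, the homotopy
\begin{equation*}
\mathcal{H}(\vartheta, x) := \bigl(\varphi^{-1}(x_2),\; -f_\lambda(t, x_1) - \vartheta \alpha_0 v(t)\bigr),
\end{equation*}
which is $L$-completely continuous by the same reasoning that ensures $N_\lambda$ is. Each slice corresponds to the scalar equation $(\varphi(u'))' + f_\lambda(t,u) + \vartheta \alpha_0 v(t) = 0$, so that $\mathcal{H}(0,\cdot) = N_\lambda$ while $\mathcal{H}(1,\cdot)$ is the operator associated with the equation in $(H_3)$ with $\alpha = \alpha_0$.

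Next I would verify admissibility of the homotopy on $\partial\Omega_d$. Given a $T$-periodic solution $x = (x_1, x_2) \in \overline{\Omega_d}$ with $Lx = \mathcal{H}(\vartheta, x)$, one sees that $x_1$ solves the scalar perturbed equation and $x_2 = \varphi(x_1')$. An application of the weak maximum principle from Appendix~\ref{appendix-A}, exploiting the coercive extension $f_\lambda(t,u) = -u$ for $u \leq 0$, yields $x_1 \geq 0$. Thus $x_1$ is a non-negative $T$-periodic solution of $(\varphi(u'))' + \lambda a(t) g(u) + \vartheta \alpha_0 v(t) = 0$, so $(H_2)$ with $\alpha = \vartheta \alpha_0 \geq 0$ forces $\|x_1\|_\infty \neq d$, i.e., $x \notin \partial \Omega_d$.

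Then I would establish the compactness needed to legitimize the use of the homotopy invariance property. Since $\|x_1\|_\infty \leq d$ and $x_1$ is $C^1$-periodic, there exists $t_0$ with $x_1'(t_0) = 0$, hence $x_2(t_0) = 0$; integrating the second equation from $t_0$ gives
\begin{equation*}
\|x_2\|_\infty \leq \lambda \|a\|_{L^1_T}\!\!\max_{u\in[0,d]}g(u) + \alpha_0 \|v\|_{L^1_T},
\end{equation*}
so the solution set is bounded, hence compact by $L$-complete continuity. Homotopy invariance yields $\mathrm{D}_L(L-N_\lambda, \Omega_d) = \mathrm{D}_L(L - \mathcal{H}(1,\cdot), \Omega_d)$. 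Finally, by $(H_3)$ combined with the same non-negativity argument, the latter operator has no fixed points in $\overline{\Omega_d}$, so choosing any bounded open $\mathcal{V}$ with $\overline{\mathcal{V}} \subseteq \Omega_d$ and fixed-point-free closure gives $\mathrm{D}_L(L - \mathcal{H}(1,\cdot), \Omega_d) = 0$ via the definition of the extended Leray--Schauder degree.

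The main obstacle I anticipate is the non-negativity step in admissibility: the maximum principle in Appendix~\ref{appendix-A} was designed with the unperturbed extension in mind, and one must check that its hypotheses still cover the inhomogeneous equation $(\varphi(u'))' + f_\lambda(t,u) = -\vartheta \alpha_0 v(t)$; this is where the argument is genuinely non-trivial, since $v$ is only assumed to lie in $L^1$ and carries no sign. Once $x_1 \geq 0$ is secured for every $\vartheta$, $(H_2)$ and $(H_3)$ plug directly into the scheme and the remaining steps are routine bookkeeping with the coincidence degree.
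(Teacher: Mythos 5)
Your proposal follows the same route as the paper: form the one-parameter family $(\varphi(u'))' + f_\lambda(t,u) + \alpha v(t) = 0$ (your $\vartheta\alpha_0$ is just a reparametrization of the paper's $\alpha \in [0,\alpha_0]$), obtain an a priori bound on $x_2$ to secure compactness, use $(H_2)$ for admissibility on $\partial\Omega_d$, and apply homotopic invariance together with $(H_3)$. The ``obstacle'' you flag is real as the lemma is stated: for $u < 0$ the integrand is $-u + \alpha v(t)$, and Corollary~\ref{cor-weak-max-principle} requires strict positivity for all $u < 0$, which forces $\alpha v(t) \geq 0$ a.e.; the paper invokes the maximum principle without comment, but in every application (e.g.\ the proof of Theorem~\ref{th-main-ex}, with $v = \mathbbm{1}_{\bigcup_i I^+_i}$, or Lemma~\ref{lem-rho} with $v$ a positive constant) one takes $v \geq 0$, so the hypothesis is indeed satisfied and no extra argument is needed. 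In short, your proof is essentially the paper's proof, and the caveat you raise is a genuine imprecision in the lemma's generality rather than a defect of your argument.
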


\begin{proof}
We study the equation
\begin{equation}\label{eq-2.falpha}
(\varphi(u'))' + f_{\lambda}(t,u) + \alpha v = 0,
\end{equation}
for $\alpha \geq 0$, which can be equivalently written as the system
\begin{equation*}
\begin{cases}
\, x_{1}' = \varphi^{-1}(x_{2}) \\
\, x_{2}' = - f_{\lambda}(t,u) - \alpha v
\end{cases}
\end{equation*}
and as a coincidence equation in the space $X$
\begin{equation*}
Lx = N_{\lambda}x + \alpha V, \quad x=(x_{1},x_{2})\in \mathrm{dom}\,L,
\end{equation*}
where $V\in X$ is the function $V(t) := (0,-v(t))$ for all $t \in \mathopen{[}0,T\mathclose{]}$.

As a first step, we check that $\mathrm{D}_{L}(L- N_{\lambda} -\alpha V,\Omega_{d})$ is well-defined for any $\alpha \geq 0$. To this aim, suppose that $\alpha\geq 0$ is fixed and consider the set
\begin{equation*}
\mathcal{S}_{\alpha}
:=\bigl{\{}x\in \overline{\Omega_{d}} \cap \mathrm{dom}\,L \colon Lx = N_{\lambda}x + \alpha V\bigr{\}} 
= \bigl{\{}x\in \overline{\Omega_{d}} \colon x = \Phi_{\lambda}x + \alpha V\bigr{\}}.
\end{equation*}
We have that $x = (x_{1},x_{2}) \in \mathcal{S}_{\alpha}$ if and only if $x_{1}(t)$ is a $T$-periodic solution of \eqref{eq-2.falpha} such that $\|x_{1}\|_{\infty}\leq d$ and $x_{2}(t)=\varphi(x_{1}'(t))$ for all $t\in\mathopen{[}0,T\mathclose{]}$. By the maximum principle in Corollary~\ref{cor-weak-max-principle}, we find that $x_{1}(t)\geq 0$ for all $t\in \mathbb{R}$ and, indeed, $x_{1}(t)$ solves \eqref{eq-lem-deg0}. Next, for all $t\in\mathopen{[}0,T\mathclose{]}$, it holds that
\begin{equation*}
|x_{2}(t)| = |x_{2}(t_{0})| + \biggl{|} \int_{t_{0}}^{t} x_{2}'(\xi) \,d\xi \biggr{|} \leq \lambda \|a\|_{L^{1}_{T}} \max_{u\in\mathopen{[}0,d\mathclose{]}} g(u) + \alpha \|v\|_{L^{1}_{T}},
\end{equation*}
by choosing $t_{0}\in\mathopen{[}0,T\mathclose{]}$ such that $x_{1}'(t_{0})=0$ and so $x_{2}(t_{0})=\varphi(x_{1}'(t_{0}))=0$.
As a consequence, $\mathcal{S}_{\alpha}$ is bounded and so the complete continuity of the operator $\Phi_{\lambda}$ ensures the compactness of $\mathcal{S}_{\alpha}$. 
Moreover, via condition $(H_{2})$, we deduce that $\mathcal{S}_{\alpha}\subseteq \Omega_{d}$.
In this manner we have proved that the coincidence degree
$\mathrm{D}_{L}(L-N_{\lambda} - \alpha V,\Omega_{d})$ is well-defined for any $\alpha\geq 0$.

Now, using $\alpha$ as a homotopy parameter, by repeating the same argument as above, we find that the set
\begin{equation*}
\begin{aligned}
\mathcal{S}
:= \bigcup_{\alpha\in \mathopen{[}0,\alpha_{0}\mathclose{]}} \mathcal{S}_{\alpha} \,
&= \bigcup_{\alpha\in \mathopen{[}0,\alpha_{0}\mathclose{]}} \bigl{\{}u\in \overline{\Omega_{d}} \cap \mathrm{dom}\,L \colon Lu = N_{\lambda}u + \alpha V\bigr{\}}
\\ &= \bigcup_{\alpha\in \mathopen{[}0,\alpha_{0}\mathclose{]}} \bigl{\{}u\in \overline{\Omega_{d}} \colon u = \Phi_{\lambda}u + \alpha V\bigr{\}}
\end{aligned}
\end{equation*}
is a compact subset of $\Omega_{d}$. Hence, by the homotopic invariance property of the coincidence degree and condition $(H_{3})$, we have that
\begin{equation*}
\mathrm{D}_{L}(L-N_{\lambda},\Omega_{d}) = \mathrm{D}_{L}(L-N_{\lambda} - \alpha_{0} V,\Omega_{d}) = 0.
\end{equation*}
This concludes the proof.
\end{proof}

\begin{remark}\label{rem-2.1}
It is worth noticing that the discussion in this section (in particular, Lemma~\ref{lem-deg1} and Lemma~\ref{lem-deg0}) still holds true when 
$\varphi \colon I \to \mathbb{R}$ is an increasing homeomorphism defined on an open interval $I \subseteq \mathbb{R}$ containing $0$, with $\varphi(0) = 0$.
$\hfill\lhd$
\end{remark}

\section{$T$-periodic solutions: existence and non-existence}\label{section-3}

In this section we state and prove our existence/non-existence results for positive $T$-periodic solutions to $(\mathscr{E}_{\lambda})$.
In particular, as already described in the introduction, we are going to provide a two-solution theorem, giving a ``small'' solution and a ``large'' one for $\lambda$ sufficiently large, and a non-existence result for $\lambda$ sufficiently small. 

\begin{theorem}\label{th-main-ex}
Let $a \colon \mathbb{R} \to \mathbb{R}$ be a locally integrable $T$-periodic function satisfying $(a_{\#})$ and $(a_{*})$. 
Let $g \colon {\mathbb{R}}^{+} \to {\mathbb{R}}^{+}$ be a continuous function satisfying $(g_{*})$.
Moreover, suppose that either $(g_{0})$ or $(g_{0}')$ holds, and either $(g_{\infty})$ or $(g_{\infty}')$ holds.
Then, there exists $\lambda^{*}>0$ such that for every $\lambda>\lambda^{*}$ there exist at least two positive $T$-periodic solutions to $(\mathscr{E}_{\lambda})$.

More precisely, there exist $\rho^{*}>0$ and $\lambda^{*}>0$ such that for every $\lambda>\lambda^{*}$ there exist two positive $T$-periodic solutions $u_{s}(t)$ and $u_{\ell}(t)$ to $(\mathscr{E}_{\lambda})$ such that $\|u_{s}\|_{\infty}<\rho^{*}<\|u_{\ell}\|_{\infty}$.
\end{theorem}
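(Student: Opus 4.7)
The plan is to produce three radii $0<r_{0}<\rho^{*}<R_{0}$ and a threshold $\lambda^{*}>0$ such that, whenever $\lambda>\lambda^{*}$, Lemmas~\ref{lem-deg1} and~\ref{lem-deg0} yield
\begin{equation*}
\mathrm{D}_{L}(L-N_{\lambda},\Omega_{r_{0}}) = -1,\quad \mathrm{D}_{L}(L-N_{\lambda},\Omega_{\rho^{*}}) = 0,\quad \mathrm{D}_{L}(L-N_{\lambda},\Omega_{R_{0}}) = -1.
\end{equation*}
Additivity of the coincidence degree then gives $\mathrm{D}_{L}(L-N_{\lambda},\Omega_{\rho^{*}}\setminus\overline{\Omega_{r_{0}}}) = +1$ and $\mathrm{D}_{L}(L-N_{\lambda},\Omega_{R_{0}}\setminus\overline{\Omega_{\rho^{*}}}) = -1$, and the existence property of $\mathrm{D}_{L}$ produces fixed points of $\Phi_{\lambda}$ in both annular sets. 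By construction of $f_{\lambda}$ in~\eqref{def-fl} and the weak maximum principle in Corollary~\ref{cor-weak-max-principle}, these fixed points correspond to non-negative $T$-periodic solutions of $(\mathscr{E}_{\lambda})$; together with $(g_{*})$ they upgrade to strictly positive solutions $u_{s}$ and $u_{\ell}$ with $\|u_{s}\|_{\infty}<\rho^{*}<\|u_{\ell}\|_{\infty}$.

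For the two degree-$(-1)$ computations I would verify hypothesis $(H_{1})$ of Lemma~\ref{lem-deg1} by contradiction, following the template of~\cite{FeZa-15ade,BoFeZa-16}. At the small radius $r_{0}$, a sequence of non-negative $T$-periodic solutions $u_{n}$ of $(\varphi(u_{n}'))' + \vartheta_{n}\lambda a(t) g(u_{n}) = 0$ with $\|u_{n}\|_{\infty}\to 0$ is renormalised as $w_{n}:=u_{n}/\|u_{n}\|_{\infty}$; the rescaled equation, once divided by $g(\|u_{n}\|_{\infty})$, passes to the limit via the regular-oscillation part of $(g_{0})$ (or a first-order Taylor expansion under $(g_{0}')$), forcing the limit $w$ to be a positive constant, which clashes with $(a_{\#})$ upon integration on a period. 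The same scheme with $(g_{\infty})$ or $(g_{\infty}')$ in place of $(g_{0})$/$(g_{0}')$ delivers $(H_{1})$ at $R_{0}$. The Minkowski structure actually helps here: the bound $|u_{n}'|<1$ gives automatic compactness of $\varphi(u_{n}')$, although one still has to exclude a limit in which $|u_{n}'|\to 1$ on a positive-measure set, typically by testing the equation against a function supported in one of the intervals $I^{+}_{i}$ granted by $(a_{*})$.

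The degree-$0$ computation on the intermediate ball is the heart of the argument and the point where the largeness of $\lambda$ is used. Choose $v\in L^{1}_{T}$ non-negative, supported in some $I^{+}_{i}$, with $v\not\equiv 0$. For $(H_{3})$ I would take $\alpha_{0}$ so large that integrating $(\varphi(u'))' + \lambda a(t) g(u) + \alpha_{0} v(t) = 0$ over $I^{+}_{i}$ and exploiting $|\varphi(u')|<1$ produces a manifest contradiction, since the left-hand side is bounded by a universal constant while $\alpha_{0}\int_{I^{+}_{i}} v\,dt$ can be made arbitrarily large. The subtle condition is $(H_{2})$: one needs a ``forbidden level'' $\rho^{*}$ which no non-negative $T$-periodic solution of the $\alpha$-perturbed equation attains, \emph{uniformly} in $\alpha\geq 0$. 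The strategy is to show that, for $\lambda>\lambda^{*}$ large, the family of all such solutions splits into a small-norm branch (pushed below $r_{0}$ by the superlinearity at zero together with $(a_{\#})$) and a large-norm branch (held above $R_{0}$ by the $\lambda$-weighted balance between the positive and negative parts of $a$), leaving a gap $(r_{0},R_{0})$ which $\|u\|_{\infty}$ cannot visit. The main obstacle is precisely this uniform-in-$\alpha$ gap estimate: unlike in the semilinear case, the Minkowski operator does not amplify the growth of $g$ into stronger a priori bounds, so the estimate must be run on the first-order system \eqref{system} and the pointwise constraint $|\varphi(u')|<1$, which is exactly the step that fixes the threshold $\lambda^{*}$.
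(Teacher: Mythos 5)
Your degree-theoretic architecture coincides with the paper's: three sets $\Omega_{r_{0}}\subseteq\Omega_{\rho^{*}}\subseteq\Omega_{R_{0}}$ with degrees $-1,0,-1$, additivity, and the maximum principles to pass from fixed points of $\Phi_{\lambda}$ to positive solutions of $(\mathscr{E}_{\lambda})$. Your verification of $(H_{1})$ at $r_{0}$ and $R_{0}$ is also essentially the paper's (Lemmas~\ref{lem-r-ro}--\ref{lem-R-C1}), except that under $(g_{0}')$ and $(g_{\infty}')$ the paper does not Taylor-expand but uses the Riccati-type substitution $z=\varphi(u')/(\vartheta\lambda g(u))$; also, the scenario ``$|u_{n}'|\to1$ on a set of positive measure'' never needs to be excluded in these lemmas, since the relevant estimates only use $1/\sqrt{1-(u_n')^{2}}\geq 1$ or $\|u_n'\|_{\infty}<1$.

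The genuine gap is in your treatment of $(H_{2})$, which you rightly call the heart of the proof. The ``gap $(r_{0},R_{0})$ which $\|u\|_{\infty}$ cannot visit'' cannot exist: at $\alpha=0$ the perturbed equation is $(\mathscr{E}_{\lambda})$ itself, and the two solutions you are constructing have norms in $\mathopen{]}r_{0},\rho^{*}\mathclose{[}$ and $\mathopen{]}\rho^{*},R_{0}\mathclose{[}$. Condition $(H_{2})$ only requires that the single level $\|u\|_{\infty}=\rho^{*}$ be forbidden, uniformly in $\alpha\geq0$, and the mechanism that achieves this is not a dichotomy between small and large branches but a purely local estimate on one positivity interval (Lemma~\ref{lem-rho}): fix $\rho^{*}$ so small that $\int_{\sigma_{i}+2\rho^{*}}^{\tau_{i}-2\rho^{*}}a>0$ for each $i$; if a non-negative solution of the $\alpha$-perturbed equation had $\max_{I^{+}_{i}}u=\rho^{*}$, its concavity on $I^{+}_{i}$ (valid for every $\alpha\geq0$, since $v\geq0$ there) forces $u\geq 2(\rho^{*})^{2}/|I^{+}_{i}|$ and $|u'|\leq 1/2$ on the inner subinterval, and integrating the equation there yields $\lambda\,\min g\cdot\int a\leq 2\varphi(1/2)$, impossible once $\lambda>\lambda^{*}$. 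One must also observe that the global maximum of any such solution is attained on $\bigcup_{i}I^{+}_{i}$, by convexity where $a\leq0$ and $v=0$; this is what reduces $\|u\|_{\infty}\neq\rho^{*}$ to the local statement. Finally, a pointwise slip: $|\varphi(u')|<1$ is false --- $\varphi(\xi)=\xi/\sqrt{1-\xi^{2}}$ is unbounded as $|\xi|\to1$; the bounded quantities are $|u'|$ and $|\varphi^{-1}|$. Your $(H_{3})$ argument therefore does not close as written; the clean route is to integrate over the whole period, so that the $\varphi(u')$ boundary terms cancel by periodicity and $\alpha\|v\|_{L^{1}_{T}}\leq\lambda\|a\|_{L^{1}_{T}}\max_{\mathopen{[}0,\rho^{*}\mathclose{]}}g$ gives the contradiction for $\alpha_{0}$ large.
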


\begin{theorem}\label{th-main-nex}
Let $a \colon \mathbb{R} \to \mathbb{R}$ be a locally integrable $T$-periodic function satisfying $(a_{\#})$ and $(a_{*})$. 
Let $g \colon {\mathbb{R}}^{+} \to {\mathbb{R}}^{+}$ be a continuously differentiable function satisfying $(g_{*})$ and
\begin{equation*}
\limsup_{u\to +\infty} \dfrac{|g'(u)|}{g(u)^{\eta}} < +\infty, \quad \text{for some $\eta\in\mathopen{[}0,1\mathclose{[}$}.
\leqno{(g_{\infty}'')}
\end{equation*}
Then, there exists $\lambda_{*}>0$ such that for every $\lambda\in\mathopen{]}0,\lambda_{*}\mathclose{[}$ there are no positive $T$-periodic solutions to $(\mathscr{E}_{\lambda})$.
\end{theorem}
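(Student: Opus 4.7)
I would prove the equivalent a priori estimate: there exists $\lambda_{*} > 0$ such that $\lambda \geq \lambda_{*}$ for every positive $T$-periodic solution $u$ of $(\mathscr{E}_{\lambda})$. The strategy combines two elementary observations. First, since $|\varphi^{-1}(v)| = |v|/\sqrt{1+v^{2}} \leq |v|$, integrating $(\varphi(u'))' = -\lambda a g(u)$ from a zero of $u'$ (which exists by $T$-periodicity) yields $\|u'\|_{\infty} \leq \lambda \|a\|_{L^{1}_{T}} M_{g}$ with $m := \min u$, $M := \max u$, $M_{g} := \max_{[m,M]} g$, and hence the oscillation bound $M - m \leq (T/2) \lambda \|a\|_{L^{1}_{T}} M_{g}$. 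Second, integrating $(\mathscr{E}_{\lambda})$ over $[0,T]$ gives $\int_{0}^{T} a g(u)\, dt = 0$; splitting the integrand as $a M_{g} + a(g(u) - M_{g})$ and using $|g(u) - M_{g}| \leq M_{g} - g_{\min}$ with $g_{\min} := \min_{[m,M]} g > 0$ produces
\begin{equation*}
M_{g} \biggl{|} \int_{0}^{T} a\, dt \biggr{|} \leq \|a\|_{L^{1}_{T}} (M_{g} - g_{\min}).
\end{equation*}
Setting $\rho := 1 - |\int_{0}^{T} a\, dt|/\|a\|_{L^{1}_{T}}$, so that $\rho \in \mathopen{[}0, 1\mathclose{[}$ by $(a_{\#})$, this rewrites as $(1-\rho) M_{g} \leq M_{g} - g_{\min}$.

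Next, by the mean value theorem, $M_{g} - g_{\min} \leq \|g'\|_{L^{\infty}([m,M])} (M - m)$. Combining with the oscillation bound from the previous paragraph and cancelling $M_{g} > 0$ then yields the key inequality
\begin{equation*}
\lambda \geq \frac{2(1-\rho)}{T \|a\|_{L^{1}_{T}} \|g'\|_{L^{\infty}([m,M])}}.
\end{equation*}
Non-existence for $\lambda$ small would follow at once if the quantity $\|g'\|_{L^{\infty}([m,M])}$ could be bounded by a constant $K^{*}$ independent of the particular positive $T$-periodic solution under consideration: it would then suffice to take $\lambda_{*} := 2(1-\rho)/(T\|a\|_{L^{1}_{T}} K^{*}) > 0$.

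This uniform bound on $\|g'\|_{L^{\infty}([m,M])}$ is the main obstacle, and it is exactly here that $(g_{\infty}'')$ enters. I would fix $C, K_{0} > 0$ such that $|g'(u)| \leq C g(u)^{\eta}$ for all $u \geq K_{0}$ and distinguish two cases. If $m \leq K_{0}$, the Minkowski a priori bound $M - m \leq T/2$ forces $[m,M] \subseteq [0, K_{0} + T/2]$, on which $|g'|$ is bounded by continuity. If instead $m > K_{0}$, then on $[m,M]$ one has $|(g^{1-\eta})'(u)| = (1-\eta) g(u)^{-\eta} |g'(u)| \leq (1-\eta) C$, hence $M_{g}^{1-\eta} - g_{\min}^{1-\eta} \leq (1-\eta) C T/2$; raising $g_{\min} \leq \rho M_{g}$ to the power $1-\eta > 0$ and inserting into the previous inequality yields $(1 - \rho^{1-\eta}) M_{g}^{1-\eta} \leq (1-\eta)CT/2$, which bounds $M_{g}$ from above and consequently $\|g'\|_{L^{\infty}([m,M])} \leq C M_{g}^{\eta}$ as well. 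Taking $K^{*}$ to be the maximum of the two bounds then closes the argument. The delicate point is precisely this last bootstrap: both strict inequalities $\eta < 1$ (in $(g_{\infty}'')$) and $\rho < 1$ (in $(a_{\#})$) are needed to guarantee $1 - \rho^{1-\eta} > 0$, and thus to prevent $M_{g}$ from escaping to infinity as $m$ varies in $[K_{0}, +\infty)$.
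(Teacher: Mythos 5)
Your proof is correct, and it takes a genuinely different route from the paper's. The paper passes to the auxiliary variable $z = \varphi(u')/(\lambda g(u))$, introduces an interpolated exponent $\alpha(u)\in[2-\eta,2]$ to obtain a global bound $D_\alpha := \sup_u |g'(u)|/g(u)^{2-\alpha(u)} < \infty$, proves $\|z\|_\infty \leq M$ by a maximal-interval bootstrap starting from a zero of $z$, and then integrates the $z$-equation over a period to contradict $(a_\#)$; it is phrased as a compactness/contradiction argument along a sequence $\lambda_n \to 0^+$. Your argument never leaves the $u$-variable and yields an explicit a priori lower bound on $\lambda$: you combine (i) the Lipschitz bound $|\varphi^{-1}(v)| \leq |v|$ to control $\|u'\|_\infty$ and hence the oscillation $M-m$ by $\tfrac{T}{2}\lambda\|a\|_{L^1_T}M_g$, (ii) the mean-zero identity $\int_0^T a\,g(u)\,dt = 0$ together with $(a_\#)$ to produce the ``defect'' inequality $g_{\min} \leq \rho M_g$ with $\rho < 1$, and (iii) a mean-value estimate to close the loop and obtain $\lambda \geq 2(1-\rho)/(T\|a\|_{L^1_T}\|g'\|_{L^\infty([m,M])})$. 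The final bootstrap via the Lipschitz constant of $g^{1-\eta}$ (using $(g_\infty'')$) plays the same structural role as the paper's $D_\alpha$ in taming $g'$ at infinity, and you correctly identify that the two strict inequalities $\eta<1$ and $\rho<1$ are exactly what keeps $M_g$, and hence $\|g'\|_{L^\infty([m,M])}$, from escaping to infinity. On balance your proof is more elementary and produces a quantitative $\lambda_*$, while the paper's change of variable has the advantage of being the same device already exploited in Lemma~\ref{lem-r-C1} and Lemma~\ref{lem-R-C1}, so that the non-existence and existence proofs share a common toolkit.
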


It is easy to check that Theorem~\ref{th-1.1} of the introduction can be deduced as a corollary of Theorem~\ref{th-main-ex} and Theorem~\ref{th-main-nex}.

Variants of Theorem~\ref{th-main-ex}, guaranteeing either the existence of a ``small'' solution or the existence of a ``large'' solution, can be given. Precisely, the following results hold true. 

\begin{theorem}\label{th-main-ex-small}
Let $a \colon \mathbb{R} \to \mathbb{R}$ be a locally integrable $T$-periodic function satisfying $(a_{\#})$ and $(a_{*})$. 
Let $g \colon {\mathbb{R}}^{+} \to {\mathbb{R}}^{+}$ be a continuous function satisfying $(g_{*})$.
Moreover, suppose that either $(g_{0})$ or $(g_{0}')$ holds.
Then, there exists $\lambda^{*}>0$ such that for every $\lambda>\lambda^{*}$ there exists at least a positive $T$-periodic solution to $(\mathscr{E}_{\lambda})$.

More precisely, there exist $\rho^{*}>0$ and $\lambda^{*}>0$ such that for every $\lambda>\lambda^{*}$ there exists a positive $T$-periodic solution $u_{s}(t)$ to $(\mathscr{E}_{\lambda})$ such that $\|u_{s}\|_{\infty}<\rho^{*}$.
\end{theorem}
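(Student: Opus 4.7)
The plan is to establish the result by coincidence degree theory, applying Lemma~\ref{lem-deg1} and Lemma~\ref{lem-deg0} on the unbounded sets $\Omega_{d} = B(0,d) \times \mathcal{C}(\mathopen{[}0,T\mathclose{]})$ for two suitably chosen radii $0 < r < \rho^{*}$, and then using the additivity of the degree to extract a positive $T$-periodic solution sitting in the annular set $\Omega_{\rho^{*}} \setminus \overline{\Omega_{r}}$.

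First I would deal with the \emph{small radius}. The aim is to prove that there exists $r > 0$ such that, for every $\vartheta \in \mathopen{]}0,1\mathclose{]}$, no positive $T$-periodic solution to $(\varphi(u'))' + \vartheta \lambda a(t) g(u) = 0$ has $\|u\|_{\infty} = r$, so that Lemma~\ref{lem-deg1} yields $\mathrm{D}_{L}(L - N_{\lambda},\Omega_{r}) = -1$. The argument is by contradiction: assuming a sequence of such solutions $u_{n}$ with $\|u_{n}\|_{\infty} \to 0^{+}$ and $\vartheta_{n} \in \mathopen{]}0,1\mathclose{]}$, I would exploit the superlinearity at zero (either in the regularly oscillating form $(g_{0})$ or in the $\mathcal{C}^{1}$ form $(g_{0}')$) to show that $\lambda a(t) g(u_{n})/u_{n}$ becomes negligible. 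Since $\varphi'(0) = 1$, the relevant linearization near zero is the standard semilinear one, and the rescaled functions $w_{n} = u_{n}/\|u_{n}\|_{\infty}$ should converge to a positive $T$-periodic solution of $w'' + c\,a(t) w = 0$ with $c = 0$, forcing $w$ to be a constant and contradicting the mean-value condition $(a_{\#})$ (via the integration-by-parts trick outlined in the introduction). This step essentially mirrors the semilinear analysis in \cite{BoFeZa-16,FeZa-15ade}.

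Next I would deal with the \emph{intermediate radius}. I pick a non-trivial $v \in L^{1}(\mathopen{[}0,T\mathclose{]})$ with $v \geq 0$, supported on one of the positivity intervals $I^{+}_{i}$ provided by $(a_{*})$ where $a \not\equiv 0$. The goal is to exhibit $\rho^{*} > r$ and $\lambda^{*} > 0$ such that, for every $\lambda > \lambda^{*}$, the hypotheses $(H_{2})$ and $(H_{3})$ of Lemma~\ref{lem-deg0} hold. The verification of $(H_{3})$ is the easier part: integrating \eqref{eq-lem-deg0} on $\mathopen{[}0,T\mathclose{]}$ yields $\alpha \int_{0}^{T} v(t)\,dt = -\lambda \int_{0}^{T} a(t)g(u(t))\,dt$, and since the right-hand side is bounded on $\{\|u\|_{\infty} \leq \rho^{*}\}$, one rules out all solutions by choosing $\alpha_{0}$ large enough. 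The verification of $(H_{2})$ is the delicate one: here one combines the intrinsic a priori bound $|u'(t)| < 1$ (which forces $u(t) \geq \rho^{*} - T > 0$ at every point whenever $\rho^{*} > T$, so that $g(u) \geq g^{*} > 0$ uniformly) with the positivity of $a$ on $I^{+}_{i}$ to derive, via an integral estimate on a suitable subinterval, that $\lambda$ cannot be arbitrarily large if such a solution is to exist. This yields $\lambda^{*}$ beyond which no positive $T$-periodic solution of the $\alpha$-perturbed equation can attain the level $\|u\|_{\infty} = \rho^{*}$, so that Lemma~\ref{lem-deg0} gives $\mathrm{D}_{L}(L - N_{\lambda}, \Omega_{\rho^{*}}) = 0$.

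Finally, by additivity of the coincidence degree applied to $\Omega_{r} \subset \Omega_{\rho^{*}}$,
\begin{equation*}
\mathrm{D}_{L}(L - N_{\lambda}, \Omega_{\rho^{*}} \setminus \overline{\Omega_{r}}) = \mathrm{D}_{L}(L - N_{\lambda}, \Omega_{\rho^{*}}) - \mathrm{D}_{L}(L - N_{\lambda}, \Omega_{r}) = 1,
\end{equation*}
and the existence theorem yields a fixed point $(u_{s}, \varphi(u_{s}'))$ of $\Phi_{\lambda}$ with $r < \|u_{s}\|_{\infty} < \rho^{*}$. By construction $u_{s}$ solves \eqref{eq-fl}, so by Corollary~\ref{cor-weak-max-principle} it is non-negative and, being non-trivial, it is strictly positive by the strong maximum principle in Appendix~\ref{appendix-A}, giving the desired ``small'' solution. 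The main obstacle I foresee is the combined near-zero argument: adapting the semilinear integration-by-parts identity of the introduction to the $\varphi$-Laplacian under only $(g_{0})$ (without $\mathcal{C}^{1}$ regularity at $0$) requires some care, but since $\varphi$ is smooth with $\varphi'(0) = 1$, the proof should follow the scheme of \cite{FeZa-15ade} with the natural bound $|u'| < 1$ replacing more delicate $L^{\infty}$ estimates in the semilinear setting.
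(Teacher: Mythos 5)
Your overall scheme --- degree $-1$ on a small ball via Lemma~\ref{lem-deg1}, degree $0$ on an intermediate ball via Lemma~\ref{lem-deg0}, additivity, and then the maximum principles of Appendix~\ref{appendix-A} --- is exactly the paper's, and your treatment of the small radius and of $(H_{3})$ is essentially correct. The genuine gap is in your verification of $(H_{2})$. You take $\rho^{*}>T$, so that $|u'|<1$ forces $u\geq\rho^{*}-T>0$ everywhere and $g(u)\geq g^{*}>0$, and you then want an integral estimate on a subinterval of $I^{+}_{i}$ to bound $\lambda$ from above. But integrating the perturbed equation on $\mathopen{[}t_{1},t_{2}\mathclose{]}\subseteq I^{+}_{i}$ only gives
\begin{equation*}
\varphi(u'(t_{2}))-\varphi(u'(t_{1}))\leq-\lambda g^{*}\int_{t_{1}}^{t_{2}}a(t)\,dt,
\end{equation*}
and to extract an upper bound on $\lambda$ you need the left-hand side bounded below, i.e., $u'(t_{1})$, $u'(t_{2})$ bounded away from $\pm1$. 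The natural bound $|u'|<1$ gives no control whatsoever on $\varphi(u')$, since $\varphi(\xi)=\xi/\sqrt{1-\xi^{2}}$ blows up as $|\xi|\to1$; and no such control can hold at large levels, because (as the asymptotic analysis of Section~\ref{section-4} shows) solutions at levels bounded away from zero genuinely develop $|u'|\to1$ on the positivity intervals as $\lambda\to+\infty$. Note also that no a priori bound at infinity is available here, since Theorem~\ref{th-main-ex-small} assumes nothing on $g$ at infinity, so you cannot argue that the level set $\|u\|_{\infty}=\rho^{*}$ is empty either.

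The paper's Lemma~\ref{lem-rho} resolves this by going in the opposite direction: $\rho^{*}$ is chosen \emph{small}, namely $\rho^{*}<|I^{+}_{i}|/4$ with $\int_{\sigma_{i}+2\rho^{*}}^{\tau_{i}-2\rho^{*}}a(t)\,dt>0$. Then the concavity of $u$ on $I^{+}_{i}$ (which persists under the perturbation $\alpha v$ with $v\geq0$) yields both the lower bound $u\geq2(\rho^{*})^{2}/|I^{+}_{i}|$ and, crucially, the gradient bound $|u'(t)|\leq u(t)/(2\rho^{*})\leq1/2$ on the inner subinterval $\mathopen{[}\sigma_{i}+2\rho^{*},\tau_{i}-2\rho^{*}\mathclose{]}$, so that $|\varphi(u')|\leq\varphi(1/2)$ there and the integral estimate closes, giving $\lambda\leq\lambda^{*}$. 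With this replacement (taking $v=\mathbbm{1}_{\bigcup_{i}I^{+}_{i}}$, so that the maximum of any non-negative solution of the perturbed equation is attained on $\bigcup_{i}I^{+}_{i}$ by convexity elsewhere, which is what reduces $(H_{2})$ to Lemma~\ref{lem-rho}), the rest of your argument goes through and coincides with the proof given in Section~\ref{section-3.2}. One minor further caveat: in your small-radius step the contradiction under $(g_{0})$ is reached by integrating the equation and dividing by $g(r_{n})$ (regular oscillation), not by the introduction's integration-by-parts identity, which requires $g$ to be $\mathcal{C}^{1}$ and increasing.
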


\begin{theorem}\label{th-main-ex-large}
Let $a \colon \mathbb{R} \to \mathbb{R}$ be a locally integrable $T$-periodic function satisfying $(a_{\#})$ and $(a_{*})$. 
Let $g \colon {\mathbb{R}}^{+} \to {\mathbb{R}}^{+}$ be a continuous function satisfying $(g_{*})$ and
\begin{equation*}
\limsup_{u\to 0^{+}} \dfrac{g(u)}{u} < +\infty.
\end{equation*}
Moreover, suppose that either $(g_{\infty})$ or $(g_{\infty}')$ holds.
Then, there exists $\lambda^{*}>0$ such that for every $\lambda>\lambda^{*}$ there exists at least a positive $T$-periodic solution to $(\mathscr{E}_{\lambda})$.

More precisely, there exist $\rho^{*}>0$ and $\lambda^{*}>0$ such that for every $\lambda>\lambda^{*}$ there exists a positive $T$-periodic solution $u_{\ell}(t)$ to $(\mathscr{E}_{\lambda})$ such that $\|u_{\ell}\|_{\infty}>\rho^{*}$.
\end{theorem}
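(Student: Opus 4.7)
The plan is to follow the topological-degree strategy already announced for Theorem~\ref{th-main-ex}, computing the coincidence degree $\mathrm{D}_{L}(L-N_{\lambda},\Omega_{d})$ on two nested sets of the form \eqref{eq-def-omega} and then extracting the ``large'' solution from the annular region by additivity. The essential simplification with respect to Theorem~\ref{th-main-ex} is that we no longer need degree $-1$ on a small ball (superlinearity at zero having been dropped): it suffices to have degree $-1$ on a large ball and degree $0$ on a fixed intermediate ball for $\lambda$ sufficiently large.

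First I would establish the a priori bound at infinity: for each fixed $\lambda>0$ there exists $R=R(\lambda)>0$ such that every non-negative $T$-periodic solution of the homotopy equation \eqref{eq-lem-deg1}, for any $\vartheta\in\mathopen{]}0,1\mathclose{]}$, satisfies $\|u\|_{\infty}<R$. Integrating \eqref{eq-lem-deg1} over a period gives $\int_{0}^{T}a(t)g(u(t))\,dt=0$, and combining this identity with the intrinsic Lipschitz bound $|u'|<1$ of the Minkowski operator, with $(a_{\#})$ and $(a_{*})$, and with either the regular oscillation $(g_{\infty})$ or the vanishing logarithmic derivative $(g_{\infty}')$ at infinity, one obtains the uniform bound exactly as in the proof of Theorem~\ref{th-main-ex}. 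Lemma~\ref{lem-deg1} applied with $d=R$ then yields $\mathrm{D}_{L}(L-N_{\lambda},\Omega_{R})=-1$.

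Second, I would fix $\rho^{*}>0$ (independent of $\lambda$) and then determine $\lambda^{*}>0$ so that for every $\lambda>\lambda^{*}$ the assumptions of Lemma~\ref{lem-deg0} are satisfied with $d=\rho^{*}$ and a suitable non-negative $v\in L^{1}(\mathopen{[}0,T\mathclose{]})$, supported inside one of the positivity intervals $I^{+}_{i}$ of $a$ provided by $(a_{*})$. At this stage the assumption $\limsup_{u\to 0^{+}} g(u)/u<+\infty$ enters as a linear upper bound $g(u)\le Cu$ on $\mathopen{[}0,\rho^{*}\mathclose{]}$, which in turn provides uniform control on the reaction term for solutions confined to $\{\|u\|_{\infty}\le\rho^{*}\}$. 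The verification of $(H_{2})$ and $(H_{3})$ then proceeds exactly along the pattern already used for the ``intermediate ball'' in the proof of Theorem~\ref{th-main-ex}: the strong positive push on the interval $I^{+}_{i}$ created by the combination of large $\lambda$ and the forcing $\alpha v$ cannot be compensated by the bounded negative part of the equation, so no such solution exists for $\alpha=\alpha_{0}$ large, and no solution reaches $\|u\|_{\infty}=\rho^{*}$ for any $\alpha\ge 0$. Hence $\mathrm{D}_{L}(L-N_{\lambda},\Omega_{\rho^{*}})=0$.

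Finally, by additivity of the coincidence degree,
\[
\mathrm{D}_{L}\bigl(L-N_{\lambda},\,\Omega_{R}\setminus\overline{\Omega_{\rho^{*}}}\bigr)=-1-0=-1,
\]
so the existence property produces a $T$-periodic solution $u_{\ell}$ of \eqref{eq-fl} with $\rho^{*}<\|u_{\ell}\|_{\infty}<R$; by Corollary~\ref{cor-weak-max-principle} it is non-negative and, being nontrivial, is a positive $T$-periodic solution to $(\mathscr{E}_{\lambda})$. The delicate point I expect is the zero-degree computation of Step~2: in the absence of superlinearity at zero, the choice of $\rho^{*}$ and the quantitative control needed for $(H_{2})$--$(H_{3})$ must exploit carefully the linear upper bound on $g$ near zero, the precise nodal structure of $a$ furnished by $(a_{*})$, and the intrinsic Lipschitz bound of the Minkowski operator, so that the threshold $\lambda^{*}$ can be taken uniformly in $\rho^{*}$.
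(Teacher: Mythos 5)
Your overall degree-theoretic scheme (degree $-1$ on a large ball via the a priori bound at infinity, degree $0$ on a fixed intermediate ball $\Omega_{\rho^{*}}$ for $\lambda$ large, subtraction to get a solution in the annulus) is indeed the route the paper follows. But your account contains a genuine misplacement of where the hypothesis $\limsup_{u\to 0^{+}} g(u)/u<+\infty$ is used, and it hides a gap in the final positivity claim.

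First, the degree-zero computation on $\Omega_{\rho^{*}}$ does \emph{not} require any bound on $g(u)/u$ near zero. The constants $\rho^{*}$ and $\lambda^{*}$ come from Lemma~\ref{lem-rho}, which only uses $(a_{*})$ and $(g_{*})$: it exploits concavity on the positivity intervals $I^{+}_{i}$, the bound $|u'|<1$, and the oddness of $\varphi$, with no growth assumption on $g$. The verification of $(H_{3})$ in Lemma~\ref{lem-deg0} is likewise elementary: integrating \eqref{eq-lem-deg0} and passing to the absolute value gives $\alpha\|v\|_{L^{1}_{T}}\le\lambda\|a\|_{L^{1}_{T}}\max_{u\in\mathopen{[}0,\rho^{*}\mathclose{]}}g(u)$, which is violated for $\alpha$ large regardless of the behavior of $g$ near $0$. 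So your ``delicate point'' in Step~2 is not where the subtlety lies.

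Second, and this is the actual gap, your closing step ``by Corollary~\ref{cor-weak-max-principle} it is non-negative and, being nontrivial, is a positive $T$-periodic solution'' does not follow. A non-negative, non-trivial $T$-periodic solution of $(\mathscr{E}_{\lambda})$ may still vanish at some $t_{0}$ (necessarily with $u'(t_{0})=0$), and since $g$ is merely continuous with $g(0)=0$, no uniqueness for the Cauchy problem at $(t_{0},0,0)$ is available to rule this out. This is precisely the point where the hypothesis $\limsup_{u\to 0^{+}}g(u)/u<+\infty$ enters: it provides hypothesis $(ii)$ of the strong maximum principle, Theorem~\ref{strong-max-principle} (with $\varphi$ as in \eqref{phi-L} and $f=f_{\lambda}$), which then forces $u\equiv0$ if $u$ vanishes together with $u'$ somewhere, a contradiction with non-triviality. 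Without invoking Theorem~\ref{strong-max-principle} your argument delivers only a non-trivial non-negative solution — exactly the weaker conclusion the paper warns about if the $g(u)/u$ boundedness is dropped.
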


We stress that in Theorem~\ref{th-main-ex-small} no assumptions for $g(u)$ at infinity are required (actually, by a careful reading of the proof, one can see that $g(u)$ could be defined only on a right neighborhood of zero). On the other hand, in Theorem~\ref{th-main-ex-large} the ratio $g(u)/u$ is assumed to be bounded for $u$ near $0$: this is required to guarantee the positivity of the solution, via a strong maximum principle (cf.~Theorem~\ref{strong-max-principle}). If such a condition is dropped, only a non-trivial non-negative $T$-periodic solution can be found.

The rest of the section is divided into two subsections. In the first one, Section~\ref{section-3.1}, we state and prove the technical lemmas needed for the application of the abstract degree lemmas of Section~\ref{section-2}. From these results the proofs of Theorem~\ref{th-main-ex}, Theorem~\ref{th-main-ex-small} and Theorem~\ref{th-main-ex-large} easily follow and are given in Section~\ref{section-3.2}, together with the proof of Theorem~\ref{th-main-nex}.

\subsection{Technical lemmas}\label{section-3.1}

This section is devoted to some technical lemmas. Notice that each of them is given under the minimal set of assumptions for $a(t)$ and $g(u)$.

The first lemma will be used for fixing the constants $\rho^{*}$ and $\lambda^{*}$ appearing in the statement of Theorem~\ref{th-main-ex} and for the computation of the topological degree in $\Omega_{\rho^{*}}$ via Lemma~\ref{lem-deg0}.

\begin{lemma}\label{lem-rho}
Let $a \colon \mathbb{R} \to \mathbb{R}$ be a locally integrable $T$-periodic function satisfying $(a_{*})$.
Let $g \colon {\mathbb{R}}^{+} \to {\mathbb{R}}^{+}$ be a continuous function satisfying $(g_{*})$.
There exist $\rho^{*}> 0$ and $\lambda^{*}> 0$ such that, for every $\lambda > \lambda^{*}$, $\alpha \geq 0$ and $i \in \{1,\ldots,m\}$, there are no non-negative solutions $u(t)$ to
\begin{equation}\label{eq-lem-rho}
(\varphi(u'))' + \lambda a(t)g(u) + \alpha = 0,
\end{equation}
with $u(t)$ defined for all $t \in I^{+}_{i}$ and such that $\max_{t\in I^{+}_{i}} u(t) = \rho^{*}$.
\end{lemma}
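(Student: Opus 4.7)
The plan is to argue by contradiction, combined with a compactness and Fatou argument as $\lambda\to+\infty$ that exploits the distinctive $1$-Lipschitz bound $|u'|<1$ of the Minkowski-curvature operator. Suppose the conclusion fails; then, for an arbitrary candidate $\rho^*>0$, we can find sequences $\lambda_n\to+\infty$, $\alpha_n\geq 0$, $i_n\in\{1,\ldots,m\}$ and non-negative solutions $u_n$ on $I^+_{i_n}$ of the equation with $\max u_n=\rho^*$. Passing to a subsequence, $i_n=i$ is independent of $n$; set $I:=I^+_i=[\sigma_1,\sigma_2]$.

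First, I would extract the qualitative structure of $\{u_n\}$. Since $a\geq 0$ on $I$ and $g,\alpha_n\geq 0$, we have $(\varphi(u_n'))'\leq 0$; the strict monotonicity of $\varphi$ gives that $u_n'$ is non-increasing, hence $u_n$ is concave on $I$. Combined with $|u_n'|<1$ almost everywhere and $0\leq u_n\leq\rho^*$, the family is uniformly bounded and equicontinuous, so by Arzel\`a--Ascoli (along a subsequence) $u_n\to u^*$ uniformly on $I$, with $u^*$ concave, $1$-Lipschitz, non-negative and $\max_I u^*=\rho^*$, attained at some $\tau^*\in I$. Concavity together with $u^*(\sigma_1),u^*(\sigma_2)\geq 0$ forces $u^*(t)\geq \rho^*(t-\sigma_1)/(\tau^*-\sigma_1)$ on $[\sigma_1,\tau^*]$ and an analogous bound on $[\tau^*,\sigma_2]$; in particular $u^*>0$ on $(\sigma_1,\sigma_2)$, and $g(u^*)>0$ there.

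The core step is an asymptotic saturation estimate. Writing $v_n:=\varphi(u_n')$, so that $v_n'=-\lambda_n a g(u_n)-\alpha_n$ and $u_n'=v_n/\sqrt{1+v_n^2}$, and choosing a maximum point $\tau_n\to\tau^*$ of $u_n$, the constraint $u_n(\sigma_2)\geq 0$ yields
\begin{equation*}
\rho^* \;\geq\; \int_{\tau_n}^{\sigma_2}\frac{|v_n(s)|}{\sqrt{1+v_n(s)^2}}\,ds.
\end{equation*}
Since $|v_n|$ is monotone on $[\tau_n,\sigma_2]$ and $|v_n(t)|\to+\infty$ at every $t$ such that $\int_{\tau^*}^{t}a(s)g(u^*(s))\,ds>0$ (or such that $\alpha_n\to+\infty$), the integrand converges to $1$ on this ``saturated'' set; Fatou's lemma then gives $\rho^*\geq \sigma_2-t_*^+$, with $t_*^+:=\inf\bigl(\{a>0\}\cap[\tau^*,\sigma_2]\bigr)$. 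A symmetric integration on $[\sigma_1,\tau_n]$ yields $\rho^*\geq t_*^- - \sigma_1$, with $t_*^-:=\sup\bigl(\{a>0\}\cap[\sigma_1,\tau^*]\bigr)$. Since $a\not\equiv 0$ on $I$ by $(a_*)$, a short case analysis over the possible positions of $\tau^*$ produces a strictly positive threshold $\rho_i>0$ depending only on the essential support of $a$ inside $I^+_i$; choosing $\rho^*\in\bigl(0,\min_{i=1,\ldots,m}\rho_i\bigr)$ then contradicts the above bounds, thereby also identifying $\lambda^*$.

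The main obstacle I anticipate is the bookkeeping needed to make the estimate uniform in all the degenerate regimes: $\alpha_n$ bounded versus $\alpha_n\to+\infty$ (in the latter case the saturated region covers all of $I$ and yields the cleaner bound $\rho^*\geq |I^+_i|/2$); interior versus boundary maxima of $u_n$ (in the boundary case $v_n$ keeps a constant sign on $I$ and the Fatou argument applies on the whole interval, but with only a one-sided bound); and possible disconnectedness of the essential support of $a$ inside $I^+_i$. The concavity-based lower bound on $u^*$ derived in the second step is essential throughout, as it guarantees $g(u^*)>0$ in the interior of $I$ so that the support of $a$ is correctly detected by the saturation argument.
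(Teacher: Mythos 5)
Your route --- contradiction, compactness of $\{u_n\}$ as $\lambda_n\to+\infty$, and a Fatou-type ``saturation'' argument showing $u_n'\to\pm1$ wherever $a$ carries mass --- is genuinely different from the paper's and can in principle be completed, but as written it leaves unresolved the step that is actually the crux: the uniform positivity of the threshold $\rho_i$. The two bounds you extract, $\rho^*\geq\sigma_2-t_*^+$ and $\rho^*\geq t_*^--\sigma_1$, depend on the location $\tau^*$ of the limit maximum point, which is produced by the contradiction sequence and not under your control, and each of them individually can be vacuous or degenerate to zero (for instance $t_*^+\uparrow\sigma_2$ as $\tau^*$ approaches the right end of the support of $a$, and the left bound is empty when there is no mass of $a$ to the left of $\tau^*$). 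What must be proved is that $\inf_{\tau\in I^+_i}\max\{\,t_*^-(\tau)-\sigma_1,\ \sigma_2-t_*^+(\tau)\,\}>0$: this does hold, because $\tau\mapsto t_*^-(\tau)-\sigma_1$ is non-decreasing, $\tau\mapsto\sigma_2-t_*^+(\tau)$ is non-increasing, and if both tended to zero along a sequence $\tau_k\to\bar\tau$ then $a$ would have no mass in $(\sigma_1,\bar\tau)$ nor in $(\bar\tau,\sigma_2)$, contradicting $a\not\equiv0$ on $I^+_i$. But this monotonicity argument, together with the boundary-maximum and $\alpha_n\to+\infty$ subcases, is exactly the ``bookkeeping'' you defer to your last paragraph; until it is carried out the proof is not complete. (Also, $t_*^\pm$ should be defined via essential infima/suprema or via conditions of the form $\int_{\tau^*}^{t}a\,g(u^*)>0$, since $\{a>0\}$ is only determined up to null sets.)

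For comparison, the paper's proof is direct and quantitative and bypasses all of this. It fixes $\rho^*<|I^+_i|/4$ with $\int_{\sigma_i+2\rho^*}^{\tau_i-2\rho^*}a(t)\,dt>0$, uses concavity to get the pointwise lower bound $u(t)\geq\rho^*\min\{t-\sigma_i,\tau_i-t\}/|I^+_i|$ together with the derivative estimate $|u'(t)|\leq u(t)/(2\rho^*)\leq1/2$ on the shrunk interval, and then integrates the equation there to obtain $\lambda\,\min g\cdot\int_{\sigma_i+2\rho^*}^{\tau_i-2\rho^*}a\leq2\varphi(1/2)$, which fails for $\lambda$ larger than an explicitly defined $\lambda^*$. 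Besides being shorter, that argument yields the explicit inequality recorded in Remark~\ref{rem-rho}, which is used in an essential way in the asymptotic analysis of Section~\ref{section-4} (Theorems~\ref{th-conv-s1} and \ref{th-conv-s2}), and it extends to a general increasing homeomorphism $\varphi$ (Remark~\ref{rem-3.2}); your compactness argument relies on the bound $|u'|<1$ specific to the Minkowski operator and produces no explicit constants, so even once completed it would not support those later steps.
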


\begin{proof}
Let us fix $\rho^{*}>0$ such that
\begin{equation*}
\rho^{*}<\dfrac{ |I^{+}_{i} |}{4} \quad \text{ and } \quad \int_{\sigma_{i}+2\rho^{*}}^{\tau_{i}-2\rho^{*}} a(t)\,dt > 0, \quad \text{for every $i = 1,\ldots,m$,}
\end{equation*}
where we have set $I^{+}_{i}=\mathopen{[}\sigma_{i},\tau_{i}\mathclose{]}$, and, accordingly, let us define
\begin{equation*}
\lambda^{*} := \max_{i=1,\ldots,m}\dfrac{2 \varphi(1/2)}{ \min \bigl{\{} g(u) \colon u \in \mathopen{[}2(\rho^{*})^{2}/ |I^{+}_{i} |,\rho^{*} \mathclose{]} \bigr{\}} \displaystyle{\int_{\sigma_{i}+2\rho^{*}}^{\tau_{i}-2\rho^{*}} a(t)\,dt}}.
\end{equation*}

Let $\lambda > \lambda^{*}$, $\alpha \geq 0$ and $i \in \{1,\ldots,m\}$. We suppose by contradiction that there exists a non-negative solution $u(t)$ to \eqref{eq-lem-rho}, defined on $I^{+}_{i}$ and such that $\max_{t \in I^{+}_{i}} u(t) = \rho^{*}$. 
We first observe that, since $u(t)$ is concave on $I^{+}_{i}$, we have the estimate (cf.~\cite[p.~807]{BoFeZa-18})
\begin{equation*}
u(t) \geq \frac{\rho^{*}}{|I^{+}_{i}|}\min\{ t - \sigma_{i}, \tau_{i} - t\}, \quad \text{for all $t \in I^{+}_{i}$,}
\end{equation*}
and, consequently, 
\begin{equation*}
u(t) \geq \frac{2(\rho^{*})^{2}}{| I^{+}_{i} |}, \quad \text{for all $t \in \mathopen{[}\sigma_{i}+2\rho^{*},\tau_{i}-2\rho^{*}\mathclose{]}$.}
\end{equation*}
Moreover, we claim that
\begin{equation}\label{eq-3.2}
|u'(t)| \leq \dfrac{u(t)}{2\rho^{*}}, \quad \text{for all $t \in \mathopen{[}\sigma_{i}+2\rho^{*},\tau_{i}-2\rho^{*}\mathclose{]}$.}
\end{equation}
To prove this, let us fix $t\in \mathopen{[}\sigma_{i}+2\rho^{*},\tau_{i}-2\rho^{*}\mathclose{]}$. The result is trivially true if $u'(t) = 0$. If $u'(t) > 0$, again by the concavity of $u(t)$, we obtain
\begin{equation*}
u(t) \geq u(t) - u(\sigma_{i}) = \int_{\sigma_{i}}^{t} u'(\xi)\,d\xi \geq 2\rho^{*} u'(t), \quad \text{for all $t \in \mathopen{[}\sigma_{i}+2\rho^{*},\tau_{i}\mathclose{]}$.}
\end{equation*}
In the case $u'(t) < 0$, the argument is analogous and therefore \eqref{eq-3.2} follows. In particular, since $u(t) \leq \rho^{*}$ we immediately deduce
\begin{equation*}
|u'(t)| \leq \dfrac{1}{2}, \quad \text{for all $t \in \mathopen{[}\sigma_{i}+2\rho^{*},\tau_{i}-2\rho^{*}\mathclose{]}$.}
\end{equation*}
Integrating equation \eqref{eq-lem-rho} on $\mathopen{[}\sigma_{i}+2\rho^{*},\tau_{i}-2\rho^{*}\mathclose{]}$ and using the fact that 
$\varphi$ is odd, we thus obtain
\begin{equation*}
\lambda \min \biggl{\{} g(u) \colon u \in \biggl{[}\frac{2 (\rho^{*})^{2}}{| I^{+}_{i} |},\rho^{*} \biggr{]}\biggr{\}} \int_{\sigma_{i}+2\rho^{*}}^{\tau_{i}-2\rho^{*}} a(t)\,dt \leq 2 \varphi \biggl{(} \dfrac{1}{2}\biggr{)}, 
\end{equation*}
contradicting the fact that $\lambda > \lambda^{*}$.
\end{proof}

\begin{remark}\label{rem-rho}
Using the same argument in the proof of Lemma~\ref{lem-rho}, one can show the following: if $u(t)$ is a non-negative solution to $(\mathscr{E}_{\lambda})$ such that, for some $i\in\{1,\ldots,m\}$, 
\begin{equation*}
c_{1} \leq \max_{t \in I^{+}_{i}} u(t) \leq c_{2} \leq \rho^{*}, \quad \text{for some $c_{1},c_{2} > 0$,}
\end{equation*}
then the following inequality
\begin{equation*}
\lambda \min \biggl{\{} g(u) \colon u \in \biggl{[}\frac{2 c_{1} \rho^{*}}{| I^{+}_{i} |},c_{2} \biggr{]}\biggr{\}} \int_{\sigma_{i}+2\rho^{*}}^{\tau_{i}-2\rho^{*}} a(t)\,dt \leq 2 \varphi \biggl{(} \dfrac{c_{2}}{2\rho^{*}}\biggr{)}
\end{equation*}
holds. The above formula will have a crucial role in the proofs of Theorem~\ref{th-conv-s1} and Theorem~\ref{th-conv-s2}.
$\hfill\lhd$
\end{remark}

All the four forthcoming lemmas deal with (non-negative) $T$-periodic solutions to
\begin{equation}\label{eq-3-theta}
(\varphi(u'))' + \vartheta\lambda a(t)g(u) = 0, \quad \vartheta\in\mathopen{]}0,1\mathclose{]},
\end{equation}
and they will be used for the computation of the topological degree via Lemma~\ref{lem-deg1}. In particular, the first two look at assumptions $(g_{0})$ and $(g_{\infty})$, respectively.

\begin{lemma}\label{lem-r-ro}
Let $a \colon \mathbb{R} \to \mathbb{R}$ be a locally integrable $T$-periodic function satisfying $(a_{\#})$. 
Let $g \colon {\mathbb{R}}^{+} \to {\mathbb{R}}^{+}$ be a continuous function satisfying $(g_{*})$ and $(g_{0})$.
Let $\lambda>0$.
There exists $r_{0}>0$ such that, for every $\vartheta\in \mathopen{]}0,1\mathclose{]}$, every non-negative $T$-periodic solution $u(t)$ to \eqref{eq-3-theta} with $\|u\|_{\infty} \leq r_{0}$ is such that $u\equiv0$.
\end{lemma}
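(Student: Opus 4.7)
The plan is to argue by contradiction. Suppose no such $r_{0}$ exists: then there are sequences $\vartheta_{n}\in\mathopen{]}0,1\mathclose{]}$ and non-trivial non-negative $T$-periodic solutions $u_{n}$ of \eqref{eq-3-theta} (with $\vartheta=\vartheta_{n}$) such that $M_{n}:=\|u_{n}\|_{\infty}\to 0^{+}$; note that $M_{n}>0$ by non-triviality and, by $(g_{*})$, $g(M_{n})>0$. I would derive a contradiction in two main steps.

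\smallskip
\emph{Step 1: the normalized profiles collapse to the constant $1$.} I claim that $u_{n}/M_{n}\to 1$ uniformly on $\mathopen{[}0,T\mathclose{]}$. The first half of $(g_{0})$ implies $\varepsilon_{n}:=\sup_{u\in\mathopen{]}0,M_{n}\mathclose{]}} g(u)/u\to 0$. Let $\bar t_{n}$ be a maximum point of $u_{n}$, so that $\varphi(u_{n}'(\bar t_{n}))=0$; integrating \eqref{eq-3-theta} from $\bar t_{n}$ yields
\begin{equation*}
|\varphi(u_{n}'(t))|\le \vartheta_{n}\lambda\int_{0}^{T}|a(s)|\,g(u_{n}(s))\,ds\le \lambda\|a\|_{L^{1}_{T}}\,M_{n}\,\varepsilon_{n}.
\end{equation*}
Since $\varphi^{-1}(\xi)=\xi/\sqrt{1+\xi^{2}}$ satisfies $|\varphi^{-1}(\xi)|\le|\xi|$, this gives $\|u_{n}'\|_{\infty}\le \lambda\|a\|_{L^{1}_{T}}M_{n}\varepsilon_{n}$, and hence
\begin{equation*}
\Bigl{|}\frac{u_{n}(t)}{M_{n}}-1\Bigr{|}=\frac{1}{M_{n}}\Bigl{|}\int_{\bar t_{n}}^{t}u_{n}'(s)\,ds\Bigr{|}\le T\lambda\|a\|_{L^{1}_{T}}\varepsilon_{n}\longrightarrow 0,
\end{equation*}
uniformly in $t\in\mathopen{[}0,T\mathclose{]}$, proving the claim.

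\smallskip
\emph{Step 2: passage to the limit in the integrated equation.} Integrating \eqref{eq-3-theta} over $\mathopen{[}0,T\mathclose{]}$ and using the $T$-periodicity of $\varphi(u_{n}')$ yields $\int_{0}^{T}a(t)g(u_{n}(t))\,dt=0$. Dividing by $g(M_{n})>0$ gives
\begin{equation*}
\int_{0}^{T}a(t)\,\frac{g\bigl{(}(u_{n}(t)/M_{n})\,M_{n}\bigr{)}}{g(M_{n})}\,dt = 0.
\end{equation*}
By Step~1 and the second (regular oscillation) part of $(g_{0})$ applied with $\omega=u_{n}(t)/M_{n}\to 1$ and $M_{n}\to 0^{+}$, the integrand converges to $a(t)$ uniformly in $t$. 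Passing to the limit yields $\int_{0}^{T}a(t)\,dt=0$, contradicting $(a_{\#})$.

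\smallskip
The delicate point is Step~1: what one must control is the \emph{normalized} derivative $u_{n}'/M_{n}$, not simply $u_{n}'$, and this is precisely what forces the use of the superlinearity quotient $g(u)/u\to 0$ (rather than the weaker $g(u)\to 0$ that would merely give $u_{n}'\to 0$). Once the profiles $u_{n}/M_{n}$ are known to tend uniformly to $1$, the regular oscillation assumption in $(g_{0})$ is tailor-made to pass $g(u_{n})/g(M_{n})$ to its uniform limit $1$ without any differentiability or monotonicity of $g$, after which the integrated equation and $(a_{\#})$ close the argument at once.
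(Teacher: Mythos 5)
Your proof is correct, and it follows the same overall architecture as the paper's (contradiction; normalize $v_n = u_n/M_n$; show $v_n \to 1$ uniformly; integrate the equation over a period, divide by $g(M_n)$ and invoke the regular-oscillation half of $(g_0)$ to contradict $(a_\#)$), but your Step~1 takes a genuinely different route for the key uniform-convergence claim. The paper multiplies the equation for $v_n$ by $v_n$ and integrates by parts, uses $1/\sqrt{1-(u_n')^2}\ge 1$ to bound $\int_0^T (v_n')^2$ by $\vartheta_n\lambda\int_0^T a\,q(u_n)\,v_n^2$, and concludes $\|v_n'\|_{L^2}\to 0$ (hence $v_n\to 1$ uniformly via Cauchy--Schwarz). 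You instead work pointwise: integrating $(\varphi(u_n'))'$ from the maximum point of $u_n$ (where $\varphi(u_n')=0$), you get an $L^\infty$ bound $|\varphi(u_n')|\le\lambda\|a\|_{L^1_T}M_n\varepsilon_n$, and then you use the structural inequality $|\varphi^{-1}(\xi)|\le|\xi|$ to transfer this to $\|u_n'\|_\infty = O(M_n\varepsilon_n)$. Your version is more elementary (no integration by parts, no $L^2$ estimates), gives an $L^\infty$ rather than $L^2$ control on $u_n'/M_n$, and isolates cleanly which property of $\varphi$ is being used ($|\varphi(\xi)|\ge|\xi|$, versus the paper's $\varphi'(\xi)\ge 1$); both properties are the concrete manifestations here of the lower bound in condition \eqref{cond-phi} discussed in Remark~\ref{rem-3.2}. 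The two methods are equally valid, and Step~2 is essentially the paper's argument.
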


\begin{proof}
By contradiction, we assume that there exists a sequence $(u_{n}(t))_{n}$ of non-negative $T$-periodic solutions to \eqref{eq-3-theta} for $\vartheta=\vartheta_{n}$ satisfying $0<\|u_{n}\|_{\infty}\to0$.
Letting $t^{*}_{n}\in \mathopen{[}0,T\mathclose{]}$ be such that $u_{n}(t^{*}_{n}) = \|u_{n}\|_{\infty} =: r_{n}$, we define
\begin{equation*}
v_{n}(t): = \dfrac{u_{n}(t)}{r_{n}}, \quad t\in\mathbb{R},
\end{equation*}
and observe that $v_{n}(t)$ is a non-negative $T$-periodic solution to
\begin{equation*}
\Biggl{(} \dfrac{v_{n}'}{\sqrt{1-(u_{n}')^{2}}}\Biggr{)}' + \vartheta_{n} \lambda a(t) q(u_{n}(t)) v_{n} = 0,
\end{equation*}
where $q(u) := g(u)/u$ for $u > 0$ and $q(0) := 0$. Multiplying the above equation by $v_{n}$ and integrating by parts on $\mathopen{[}0,T\mathclose{]}$, we obtain
\begin{equation*}
\int_{0}^{T} v_{n}'(t)^{2} \,dt \leq \int_{0}^{T} \dfrac{v_{n}'(t)^{2}}{\sqrt{1-u_{n}'(t)^{2}}} = \vartheta_{n} \lambda \int_{0}^{T} a(t)q(u_{n}(t))v_{n}(t)^{2} \,dt.
\end{equation*}
Therefore, using the first condition in $(g_{0})$ and recalling that $\| v_{n} \|_{\infty} \leq 1$, we obtain $\int_{0}^{T} v_{n}'(t)^{2} \,dt \to 0$.
As a consequence, for any $t \in \mathopen{[}0,T\mathclose{]}$,
\begin{equation*}
|v_{n}(t) - 1 | = |v_{n}(t) - v_{n}(t^{*}_{n}) | \leq \int_{0}^{T} | v_{n}'(\xi) | \,d\xi \leq \sqrt{T} \biggl{(}\int_{0}^{T} v_{n}'(\xi)^{2} \,d\xi \biggr{)}^{\frac{1}{2}} \to 0,
\end{equation*}
namely $v_{n}(t) \to 1$ uniformly in $t \in \mathopen{[}0,T\mathclose{]}$. 

Integrating now the equation for $u_{n}$ on $\mathopen{[}0,T\mathclose{]}$, we obtain
\begin{equation*}
0 = \int_{0}^{T} a(t) g(u_{n}(t))~\!dt = \int_{0}^{T} a(t) g(r_{n})~\!dt + \int_{0}^{T} a(t)\bigl{(}g(r_{n} v_{n}(t)) - g(r_{n})\bigr{)}~\!dt
\end{equation*}
and hence, dividing by $g(r_{n}) > 0$,
\begin{equation*}
0 < - \int_{0}^{T} a(t)~\!dt \leq \|a\|_{L^{1}_{T}} \sup_{t\in \mathopen{[}0,T\mathclose{]}}\biggl{|}\dfrac{g(r_{n} v_{n}(t))}{g(r_{n})} - 1\biggr{|}.
\end{equation*}
Using the second condition in $(g_{0})$ and recalling that $v_{n}(t) \to 1$ uniformly, we find that the right-hand side of the above inequality tends to zero, a contradiction.
\end{proof}

\begin{lemma}\label{lem-R-ro}
Let $a \colon \mathbb{R} \to \mathbb{R}$ be a locally integrable $T$-periodic function satisfying $(a_{\#})$. 
Let $g \colon {\mathbb{R}}^{+} \to {\mathbb{R}}^{+}$ be a continuous function satisfying $(g_{*})$ and $(g_{\infty})$.
There exists $R_{0}>0$ such that, for every $\lambda>0$ and for every $\vartheta\in \mathopen{]}0,1\mathclose{]}$, every non-negative $T$-periodic solution $u(t)$ to \eqref{eq-3-theta} satisfies $\|u\|_{\infty}< R_{0}$.
\end{lemma}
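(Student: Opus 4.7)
The plan is to argue by contradiction, mirroring the structure of the proof of Lemma~\ref{lem-r-ro} but in the opposite asymptotic regime. The key new ingredient is the built-in $\mathcal{C}^{1}$-bound $|u'(t)| < 1$ forced by the Minkowski operator $\varphi$, which replaces (and indeed simplifies) the energy estimate used in Lemma~\ref{lem-r-ro}.

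First I would suppose for contradiction that there exists a sequence $(u_{n})_{n}$ of non-negative $T$-periodic solutions to \eqref{eq-3-theta} with $\vartheta = \vartheta_{n} \in \mathopen{]}0,1\mathclose{]}$ and $\lambda = \lambda_{n} > 0$, such that $r_{n} := \|u_{n}\|_{\infty} \to +\infty$. Since $u_{n}' = \varphi^{-1}(\varphi(u_{n}'))$ and $\varphi^{-1}$ takes values in $\mathopen{]}-1,1\mathclose{[}$, we have $|u_{n}'(t)| < 1$ pointwise. Combining this with the $T$-periodicity (picking the shorter arc joining a point where $u_{n}$ attains its minimum to a point where it attains its maximum, and integrating $u_{n}'$ along it) yields
\begin{equation*}
\max_{t \in \mathopen{[}0,T\mathclose{]}} u_{n}(t) - \min_{t \in \mathopen{[}0,T\mathclose{]}} u_{n}(t) \leq T/2,
\end{equation*}
so that the normalized functions $v_{n}(t) := u_{n}(t)/r_{n}$ satisfy $\|v_{n} - 1\|_{\infty} \leq T/(2 r_{n}) \to 0$ as $n \to +\infty$.

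Next I would integrate equation \eqref{eq-3-theta} on $\mathopen{[}0,T\mathclose{]}$. Since $\varphi(u_{n}')$ is $T$-periodic and $\vartheta_{n} \lambda_{n} > 0$, this gives $\int_{0}^{T} a(t) g(u_{n}(t)) \,dt = 0$. Splitting $g(u_{n}(t)) = g(r_{n}) + (g(r_{n} v_{n}(t)) - g(r_{n}))$ and dividing by $g(r_{n}) > 0$ (positive by $(g_{*})$ since $r_{n} > 0$), one obtains
\begin{equation*}
0 < -\int_{0}^{T} a(t)\,dt \leq \|a\|_{L^{1}_{T}} \sup_{t\in \mathopen{[}0,T\mathclose{]}} \biggl{|} \dfrac{g(r_{n} v_{n}(t))}{g(r_{n})} - 1 \biggr{|}.
\end{equation*}
Using $v_{n} \to 1$ uniformly together with the regularly-oscillating condition $(g_{\infty})$, the right-hand side tends to zero, contradicting $(a_{\#})$ and producing the desired uniform bound $R_{0}$ (which, as the statement emphasises, is independent of $\lambda$ and $\vartheta$).

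The main obstacle in the companion Lemma~\ref{lem-r-ro} was obtaining $v_{n} \to 1$ uniformly, which there required a Sobolev-type control derived by multiplying the equation by $v_{n}$ and integrating. Here, by contrast, this step becomes essentially trivial: the Minkowski constraint $|u_{n}'| < 1$ does all the work for free, and is moreover the reason the statement can be given uniformly in $\lambda > 0$.
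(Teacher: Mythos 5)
Your proof is correct and follows essentially the same strategy as the paper: exploit $|u_n'| < 1$ to show $v_n := u_n/\|u_n\|_\infty \to 1$ uniformly, integrate the equation, divide by $g(\|u_n\|_\infty)$, and invoke $(g_\infty)$ and $(a_\#)$ for the contradiction. The only cosmetic difference is that you obtain the oscillation bound $\max u_n - \min u_n \le T/2$ by the "shorter arc" argument, while the paper simply notes $\|v_n'\|_\infty \le 1/R_n \to 0$ and integrates; both give the uniform convergence $v_n \to 1$ with no essential difference.
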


\begin{proof}
By contradiction, we assume that there exists a sequence $(u_{n}(t))_{n}$ of non-negative $T$-periodic solutions to \eqref{eq-3-theta} for $\vartheta=\vartheta_{n}$ and $\lambda=\lambda_{n}$ satisfying $\|u_{n}\|_{\infty}\to+\infty$. Letting $t^{*}_{n}\in \mathopen{[}0,T\mathclose{]}$ be such that $u_{n}(t^{*}_{n}) = \|u_{n}\|_{\infty} =: R_{n}$, we define
\begin{equation*}
v_{n}(t): = \dfrac{u_{n}(t)}{R_{n}}, \quad t\in\mathbb{R}.
\end{equation*}
Since $\|u_{n}'\|_{\infty} \leq 1$, we easily find $\| v_{n}' \|_{\infty} \to 0$ and, consequently,
\begin{equation*}
|v_{n}(t) - 1 | = |v_{n}(t) - v_{n}(t^{*}_{n}) | \leq \int_{0}^{T} | v_{n}'(\xi) | \,d\xi \to 0,
\end{equation*}
namely $v_{n}(t) \to 1$ uniformly in $t \in \mathopen{[}0,T\mathclose{]}$. Integrating the equation for $u_{n}$ and dividing by $g(R_{n}) > 0$, we thus obtain
\begin{equation*}
0 < - \int_{0}^{T} a(t)~\!dt \leq \|a\|_{L^{1}_{T}} \sup_{t\in \mathopen{[}0,T\mathclose{]}}\biggl{|}\dfrac{g(R_{n} v_{n}(t))}{g(R_{n})} - 1\biggr{|}.
\end{equation*}
Using $(g_{\infty})$, a contradiction easily follows.
\end{proof}

The last two lemmas give the same conclusions of Lemma~\ref{lem-r-ro} and Lemma~\ref{lem-R-ro}, under the alternative assumptions at zero and at infinity $(g_{0}')$ and $(g_{\infty}')$, respectively. The common strategy for their proofs is based on the following change of variable
\begin{equation}\label{change-var}
z(t) := \dfrac{\varphi(u'(t))}{\vartheta \lambda g(u(t))}, \quad t\in\mathbb{R}.
\end{equation}
One can easily check that if $u(t)$ is a $T$-periodic positive solution to \eqref{eq-3-theta}, where $g(u)$ is continuously differentiable on the range of $u(t)$, then $z(t)$ is a $T$-periodic solution to the first order equation
\begin{equation*}
z' + a(t) + \dfrac{g'(u(t))\varphi(u'(t))u'(t)}{\vartheta\lambda g(u(t))^{2}} = 0.
\end{equation*}
Such an equation can be written either as
\begin{equation}\label{eq-z1}
z' + a(t) + \vartheta\lambda g'(u(t)) \sqrt{1-u'(t)^{2}} z^{2}= 0
\end{equation}
or as
\begin{equation}\label{eq-z2}
z' + a(t) + \dfrac{g'(u(t))u'(t)}{g(u(t))} z = 0.
\end{equation}
Precisely, equation \eqref{eq-z1} will be used in Lemma~\ref{lem-r-C1} while equation \eqref{eq-z2} will be used in Lemma~\ref{lem-R-C1}.
For further convenience, we also observe that $z(t)$ vanishes at least once on $\mathopen{[}0,T\mathclose{]}$ (actually, at least twice), due to the $T$-periodicity of $u(t)$.

\begin{lemma}\label{lem-r-C1}
Let $a \colon \mathbb{R} \to \mathbb{R}$ be a locally integrable $T$-periodic function satisfying $(a_{\#})$. 
Let $g \colon {\mathbb{R}}^{+} \to {\mathbb{R}}^{+}$ be a continuous function satisfying $(g_{*})$ and $(g_{0}')$. Let $\lambda>0$.
There exists $r_{0}>0$ such that, for every $\vartheta\in \mathopen{]}0,1\mathclose{]}$, every non-negative $T$-periodic solution $u(t)$ to \eqref{eq-3-theta} with $\|u\|_{\infty} \leq r_{0}$ is such that $u\equiv0$.
\end{lemma}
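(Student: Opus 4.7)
I would argue by contradiction, in the same spirit as Lemma~\ref{lem-r-ro}, but reducing the second-order problem via the Riccati-type transformation \eqref{change-var} rather than by normalising $u_n$. Suppose the statement is false: there exist sequences $\vartheta_n\in\mathopen{]}0,1\mathclose{]}$ and non-negative $T$-periodic solutions $u_n$ of \eqref{eq-3-theta} with $u_n\not\equiv 0$ and $r_n:=\|u_n\|_{\infty}\to 0^{+}$. Since $(g_{0}')$ in particular implies that $g(u)/u$ is bounded in a right neighbourhood of $0$, the strong maximum principle from Appendix~\ref{appendix-A} ensures that $u_n(t)>0$ for every $t$; moreover, $\delta_n:=\max_{s\in\mathopen{[}0,r_n\mathclose{]}}|g'(s)|$ is well defined for $n$ large and, by the continuity of $g'$ together with $g'(0)=0$, satisfies $\delta_n\to 0$.

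Next I would introduce $z_n(t):=\varphi(u_n'(t))/(\vartheta_n\lambda g(u_n(t)))$, a function of class $\mathcal{C}^{1}$ and $T$-periodic thanks to the positivity of $u_n$, which solves \eqref{eq-z1}. If $t_n^{*}\in\mathopen{[}0,T\mathclose{]}$ denotes a maximum point of $u_n$, then $u_n'(t_n^{*})=0$, hence $z_n(t_n^{*})=0$. The plan is to extract from \eqref{eq-z1} two estimates on $\int_{0}^{T} z_n^{2}\,dt$ that are mutually incompatible for $n$ large.

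For the lower bound, integrating \eqref{eq-z1} on $\mathopen{[}0,T\mathclose{]}$ and invoking $(a_\#)$ yields
\begin{equation*}
\vartheta_n\lambda\int_{0}^{T} g'(u_n(t))\sqrt{1-u_n'(t)^{2}}\,z_n(t)^{2}\,dt = -\int_{0}^{T} a(t)\,dt =: c>0,
\end{equation*}
so that, using $\vartheta_n\leq 1$, $|g'(u_n)|\leq\delta_n$, and $\sqrt{1-u_n'^{2}}\leq 1$,
\begin{equation*}
\int_{0}^{T} z_n(t)^{2}\,dt \,\geq\, \frac{c}{\lambda\,\delta_n} \,\longrightarrow\, +\infty.
\end{equation*}
For the upper bound, integrating \eqref{eq-z1} from $t_n^{*}$ to $t\in\mathopen{[}t_n^{*},t_n^{*}+T\mathclose{]}$ and using $z_n(t_n^{*})=0$ together with the $T$-periodicity of $a$, we obtain
\begin{equation*}
|z_n(t)| \,\leq\, \|a\|_{L^{1}_{T}} + \lambda\,\delta_n\,Y_n(t), \qquad Y_n(t):=\int_{t_n^{*}}^{t} z_n(s)^{2}\,ds.
\end{equation*}
The non-negative function $Y_n$ therefore satisfies the quadratic differential inequality $Y_n' = z_n^{2} \leq (\|a\|_{L^{1}_{T}} + \lambda\delta_n Y_n)^{2}$; separating variables in this Riccati-type inequality and integrating on $\mathopen{[}t_n^{*},t_n^{*}+T\mathclose{]}$ shows that, as soon as $\lambda\delta_n\|a\|_{L^{1}_{T}} T<1$ (which holds for $n$ large),
\begin{equation*}
\int_{0}^{T} z_n(t)^{2}\,dt \,=\, Y_n(t_n^{*}+T) \,\leq\, \frac{\|a\|_{L^{1}_{T}}^{2}\,T}{1-\lambda\,\delta_n\,\|a\|_{L^{1}_{T}}\,T},
\end{equation*}
which stays bounded as $n\to\infty$. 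Together with the previous lower bound, this is a contradiction and the lemma follows.

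The most delicate point is the balance between these two estimates: both are obtained from the single quadratic term $\vartheta\lambda g'(u)\sqrt{1-u'^{2}}z^{2}$ in \eqref{eq-z1}, and the argument works precisely because the smallness of $\delta_n$ forces the lower bound to blow up like $1/\delta_n$ while the Gronwall/separation-of-variables argument keeps the upper bound of order $O(1)$. A secondary technical point is the use of the strong maximum principle to pass from \emph{non-negative} to \emph{strictly positive} solutions, which is what makes the Riccati change of variable \eqref{change-var} well defined.
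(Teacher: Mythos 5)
Your proof is correct, and it follows the paper's overall strategy (the Riccati-type change of variable \eqref{change-var} leading to \eqref{eq-z1}, use of a zero of $z_n$, and a contradiction with $(a_{\#})$), but the key a priori estimate on $z_n$ is obtained by a genuinely different technical route. The paper fixes a constant $M > \|a\|_{L^{1}_{T}}$ in advance and proves $\|z_n\|_{\infty} \leq M$ for $n$ large by a maximal-interval bootstrap: assuming $\|z_n\|_{\infty} > M$, it locates a maximal subinterval $J_n$ on which $|z_n|\leq M$ starting from a zero of $z_n$, integrates \eqref{eq-z1} over $J_n$, and lets $\sup_t |g'(u_n(t))|\to 0$ do the work. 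It then integrates \eqref{eq-z1} over the full period to contradict $(a_\#)$. You instead control $\int_{0}^{T} z_n^{2}$: a lower bound $\geq c/(\lambda\delta_n) \to +\infty$ comes directly from integrating \eqref{eq-z1} over $[0,T]$ and $(a_\#)$, while an upper bound $O(1)$ comes from the scalar Riccati-type differential inequality $Y_n' \leq (\|a\|_{L^1_T}+\lambda\delta_n Y_n)^2$ with $Y_n(t_n^*)=0$, integrated by separation of variables. The two competing estimates then clash immediately. Your version is a bit more quantitative (it exhibits an explicit blow-up rate $1/\delta_n$ and an explicit uniform bound), while the paper's bootstrap on $\|z_n\|_\infty$ is closer in form to the arguments in Lemma~\ref{lem-R-C1} and Theorem~\ref{th-main-nex}, so it keeps the three proofs visibly parallel. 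One small thing you make explicit that the paper leaves implicit is the passage from non-negative to strictly positive solutions via the strong maximum principle (Theorem~\ref{strong-max-principle}), which is indeed what makes the change of variable well defined; it is worth noting that $(g_0')$ supplies the required boundedness of $g(u)/u$ near zero for that theorem to apply.
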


\begin{proof}
Let $M>\|a\|_{L^{1}_{T}}$.
By contradiction, we assume that there exists a sequence $(u_{n}(t))_{n}$ of non-negative $T$-periodic solutions to \eqref{eq-3-theta} for $\vartheta=\vartheta_{n}$ satisfying $0<\|u_{n}\|_{\infty}\to0$. We perform the change of variable as in \eqref{change-var} and we claim that
\begin{equation*}
\|z_{n}\|_{\infty} \leq M.
\end{equation*}
We suppose by contradiction that this is not true. Then, recalling the fact that $z_{n}(t)$ vanishes at some point $\tilde{t}_{n}\in\mathopen{[}0,T\mathclose{]}$, we can find a maximal interval $J_{n}\subseteq \mathopen{[}0,T\mathclose{]}$ either of the form $\mathopen{[}\tilde{t}_{n},\hat{t}_{n}\mathclose{]}$ or of the form $\mathopen{[}\hat{t}_{n},\tilde{t}_{n}\mathclose{]}$, such that $|z_{n}(t)| \leq M$ for all $t\in J_{n}$ and $|z_{n}(t)| > M$ for some $t\notin J_{n}$. By the maximality of the interval $J_{n}$, we also know that $|z_{n}(\hat{t}_{n})| = M$.
Then, integrating on $J_{n}$ and using equation \eqref{eq-z1}, we obtain
\begin{equation*}
M = |z_{n}(\hat{t}_{n})| \leq \int_{J_{n}} |z_{n}'(t)| \, dt \leq \|a\|_{L^{1}_{T}} + \lambda T \sup_{t \in \mathopen{[}0,T\mathclose{]}} |g '(u_{n}(t))| M^{2}.
\end{equation*}
Using $(g_{0}')$ and passing to the limit we thus obtain $M\leq\|a\|_{L^{1}_{T}}$, contradicting the choice of $M$.

Now, we integrate \eqref{eq-z1} in $\mathopen{[}0,T\mathclose{]}$ in order to obtain
\begin{equation*}
0 < -\int_{0}^{T} a(t)\,dt \leq \lambda T \sup_{t \in \mathopen{[}0,T\mathclose{]}} |g'(u_{n}(t))| M^{2}.
\end{equation*}
and a contradiction is reached using again $(g_{0}')$.
\end{proof}

\begin{lemma}\label{lem-R-C1}
Let $a \colon \mathbb{R} \to \mathbb{R}$ be a locally integrable $T$-periodic function satisfying $(a_{\#})$.
Let $g \colon {\mathbb{R}}^{+} \to {\mathbb{R}}^{+}$ be a continuous function satisfying $(g_{*})$ and $(g_{\infty}')$.
There exists $R_{0}>0$ such that, for every $\lambda>0$ and for every $\vartheta\in \mathopen{]}0,1\mathclose{]}$, every non-negative $T$-periodic solution $u(t)$ to \eqref{eq-3-theta} satisfies $\|u\|_{\infty} < R_{0}$.
\end{lemma}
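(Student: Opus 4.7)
The plan is to mimic Lemma~\ref{lem-R-ro}, but replacing the regular-oscillation argument based on $(g_{\infty})$ with one exploiting the change of variable \eqref{change-var} together with a Gronwall estimate, in the same spirit as Lemma~\ref{lem-r-C1}. Suppose by contradiction that there exist sequences $\lambda_{n}>0$, $\vartheta_{n}\in\mathopen{]}0,1\mathclose{]}$ and non-negative $T$-periodic solutions $u_{n}$ to \eqref{eq-3-theta} with $R_{n}:=\|u_{n}\|_{\infty}\to+\infty$. The a priori bound $\|u_{n}'\|_{\infty}<1$ built into the Minkowski operator implies that the rescalings $v_{n}(t):=u_{n}(t)/R_{n}$ satisfy $\|v_{n}'\|_{\infty}\leq 1/R_{n}\to 0$; combined with $v_{n}(t_{n}^{*})=1$ at a maximum point, this yields $v_{n}\to 1$ uniformly on $\mathopen{[}0,T\mathclose{]}$. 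In particular, for $n$ large, $u_{n}(t)\geq R_{n}/2>1/\varepsilon$ for every $t$, so $g$ is of class $\mathcal{C}^{1}$ and strictly positive on the range of $u_{n}$.

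This legitimises the change of variable \eqref{change-var}, producing $T$-periodic functions $z_{n}$ that solve equation \eqref{eq-z2}. Since $u_{n}$ attains a maximum, $u_{n}'$ and hence $z_{n}$ vanishes at some $t_{n}^{**}\in\mathopen{[}0,T\mathclose{]}$. Setting $\eta_{n}:=\sup_{t}|g'(u_{n}(t))/g(u_{n}(t))|$, assumption $(g_{\infty}')$ gives $\eta_{n}\to 0$. Integrating \eqref{eq-z2} on $\mathopen{[}t_{n}^{**},t\mathclose{]}$ for $t\in\mathopen{[}t_{n}^{**},t_{n}^{**}+T\mathclose{]}$ and using $|u_{n}'|<1$ produces
\begin{equation*}
|z_{n}(t)| \leq \|a\|_{L^{1}_{T}} + \eta_{n}\int_{t_{n}^{**}}^{t} |z_{n}(\xi)|\,d\xi,
\end{equation*}
and Gronwall's inequality, extended by $T$-periodicity, then gives the uniform bound $\|z_{n}\|_{\infty} \leq \|a\|_{L^{1}_{T}}\,e^{\eta_{n}T}$.

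Finally, integrating \eqref{eq-z2} over $\mathopen{[}0,T\mathclose{]}$ and invoking the $T$-periodicity of $z_{n}$,
\begin{equation*}
-\int_{0}^{T} a(t)\,dt = \int_{0}^{T} \frac{g'(u_{n}(t))u_{n}'(t)}{g(u_{n}(t))}\,z_{n}(t)\,dt,
\end{equation*}
whose right-hand side is bounded in absolute value by $\eta_{n} T \|z_{n}\|_{\infty}\leq \eta_{n} T \|a\|_{L^{1}_{T}}e^{\eta_{n}T}\to 0$, contradicting $(a_{\#})$. The main subtlety lies in the required uniformity of $R_{0}$ in both $\vartheta$ and $\lambda$: although the change of variable \eqref{change-var} depends on the product $\vartheta\lambda$, the Gronwall bound on $\|z_{n}\|_{\infty}$ involves only $\|a\|_{L^{1}_{T}}$ and $\eta_{n}$, which are both independent of $\vartheta\lambda$, and this is precisely what makes the argument yield a universal $R_{0}$.
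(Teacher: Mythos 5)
Your proof is correct and follows essentially the same approach as the paper: pass to the variable $z_{n}$ via \eqref{change-var}, exploit $(g_{\infty}')$ and $\|u_{n}'\|_{\infty}<1$ to show $\eta_{n}:=\sup_{t}|g'(u_{n}(t))/g(u_{n}(t))|\to 0$, bound $\|z_{n}\|_{\infty}$, and integrate \eqref{eq-z2} over a period to contradict $(a_{\#})$. The only variation is in the intermediate bound on $\|z_{n}\|_{\infty}$: you use Gronwall on $|z_{n}(t)|\leq \|a\|_{L^{1}_{T}}+\eta_{n}\int_{t_{n}^{**}}^{t}|z_{n}|$, whereas the paper bounds $\|z_{n}\|_{\infty}\leq\int_{0}^{T}|z_{n}'|\leq\|a\|_{L^{1}_{T}}+T\eta_{n}\|z_{n}\|_{\infty}$ and absorbs the last term on the left once $T\eta_{n}<1$; both yield a bound independent of $\vartheta\lambda$, which is the crucial uniformity. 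One small point in your favour: you make explicit that $u_{n}(t)>1/\varepsilon$ for $n$ large, which is what legitimises the change of variable under $(g_{\infty}')$ (since $g$ is only assumed $\mathcal{C}^{1}$ on $\mathopen{]}1/\varepsilon,+\infty\mathclose{[}$); the paper leaves this implicit.
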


\begin{proof}
By contradiction, we assume that there exists a sequence $(u_{n}(t))_{n}$ of non-negative $T$-periodic solutions to \eqref{eq-3-theta} for $\vartheta=\vartheta_{n}$ and $\lambda=\lambda_{n}$ satisfying $\|u_{n}\|_{\infty}\to+\infty$. Since $\|u_{n}'\|_{\infty}<1$ it follows that $u_{n}(t)\to+\infty$ uniformly in $t$, so that
\begin{equation}\label{eq-3unif}
\dfrac{g'(u_{n}(t))}{g(u_{n}(t))}\to0, \quad \text{uniformly in $t$,}
\end{equation}
by assumption $(g_{\infty}')$.
We perform the change of variable as in \eqref{change-var} and we claim that the sequence $(\|z_{n}\|_{\infty})_{n}$ is bounded.
Indeed, recalling that $z_{n}(t)$ vanishes at least once and using equation \eqref{eq-z2}, we obtain
\begin{equation*}
\|z_{n}\|_{\infty} \leq \int_{0}^{T} |z_{n}'(t)| \,dt \leq \|a\|_{L^{1}_{T}} + T \sup_{t \in \mathopen{[}0,T\mathclose{]}} \dfrac{|g'(u_{n}(t))|}{g(u_{n}(t))} \|z_{n}\|_{\infty}.
\end{equation*}
Using \eqref{eq-3unif}, the claim easily follows.

Now, we integrate \eqref{eq-z2} in $\mathopen{[}0,T\mathclose{]}$ in order to obtain
\begin{equation*}
0 < -\int_{0}^{T} a(t)\,dt \leq T \sup_{t \in \mathopen{[}0,T\mathclose{]}} \dfrac{|g'(u_{n}(t))|}{g(u_{n}(t))} \|z_{n}\|_{\infty}
\end{equation*}
and a contradiction is reached using again \eqref{eq-3unif}.
\end{proof}

\subsection{Proofs}\label{section-3.2}

In this section we give the proofs of our non-existence/existence theorems.

\begin{proof}[Proof of Theorem~\ref{th-main-ex}]
We first apply Lemma~\ref{lem-rho} so as to find the constants $\rho^{*} > 0$ and $\lambda^{*} > 0$, and fix $\lambda > \lambda^{*}$. 

We claim that Lemma~\ref{lem-deg0} applies to the set $\Omega_{\rho^{*}}$ (see the definition in \eqref{eq-def-omega}) taking as $v(t)$ the indicator function $\mathbbm{1}_{\bigcup_{i} I^{+}_{i}}(t)$ of the set $\bigcup_{i} I^{+}_{i}$. To verify that assumption $(H_{2})$ holds true, we first observe that, 
since $v(t) = 0$ for any $t \in (\mathbb{R}/T\mathbb{Z})\setminus \bigcup_{i} I^{+}_{i}$, any non-negative $T$-periodic solution $u(t)$ to \eqref{eq-lem-deg0} is convex therein; therefore, its maximum is attained on $\bigcup_{i} I^{+}_{i}$. Then, $(H_{2})$ plainly follows from Lemma~\ref{lem-rho}. As for assumption $(H_{3})$, we integrate equation \eqref{eq-lem-deg0} on $\mathopen{[}0,T\mathclose{]}$ and pass to the absolute value in order to obtain
\begin{equation*}
\alpha \| v \|_{L^{1}_{T}} \leq \lambda \| a \|_{L^{1}_{T}} \max_{u \in \mathopen{[}0,\rho^{*}\mathclose{]}} g(u),
\end{equation*}
whence a contradiction follows for $\alpha$ sufficiently large. From Lemma~\ref{lem-deg0} we thus obtain
\begin{equation*}
\mathrm{D}_{L}(L-N_{\lambda},\Omega_{\rho^{*}}) = 0.
\end{equation*}

Next, we use either Lemma~\ref{lem-r-ro} or Lemma~\ref{lem-r-C1} (depending on whether $(g_{0})$ or $(g_{0}')$ is satisfied) as well as either Lemma~\ref{lem-R-ro} or Lemma~\ref{lem-R-C1} (depending on whether $(g_{\infty})$ or $(g_{\infty}')$ is satisfied) so as to find $r_{0} \in \mathopen{]}0,\rho^{*} \mathclose{[}$ and $R_{0} > \rho^{*}$ such that the corresponding conclusions hold true. Then, Lemma~\ref{lem-deg1} applies both to $\Omega_{r_{0}}$ and to $\Omega_{R_{0}}$
(indeed, $(H_{1})$ is trivially satisfied) yielding
\begin{equation*}
\mathrm{D}_{L}(L-N_{\lambda},\Omega_{r_{0}}) = -1 \quad \text{ and } \quad 
\mathrm{D}_{L}(L-N_{\lambda},\Omega_{R_{0}}) = -1.
\end{equation*}

By the additivity property of the coincidence degree, we obtain that
\begin{equation}\label{eq-deg-small}
\mathrm{D}_{L}(L-N_{\lambda},\Omega_{\rho^{*}} \setminus \overline{\Omega_{r_{0}}}) = 1
\end{equation}
and
\begin{equation}\label{eq-deg-large}
\mathrm{D}_{L}(L-N_{\lambda},\Omega_{R_{0}} \setminus \overline{\Omega_{\rho^{*}}}) = -1.
\end{equation}
As a consequence, we get the existence of a $T$-periodic solution $x_{s}(t) = (x_{s,1}(t),x_{s,2}(t))$
to \eqref{system} in $\Omega_{\rho^{*}} \setminus \overline{\Omega_{r_{0}}}$ as well as the existence of a 
$T$-periodic solution $x_{\ell}(t) = (x_{\ell,1}(t),x_{\ell,2}(t))$ to \eqref{system}
in $\Omega_{R_{0}} \setminus \overline{\Omega_{\rho^{*}}}$. Then, $u_{s}(t) := x_{s,1}(t)$ and $u_{\ell}(t) := x_{\ell,1}(t)$ are $T$-periodic solutions to \eqref{eq-fl} and satisfy 
\begin{equation*}
r_{0} < \| u_{s} \|_{\infty} < \rho^{*} < \| u_{\ell} \|_{\infty} < R_{0}.
\end{equation*}
By the maximum principles in Corollary~\ref{cor-weak-max-principle} and Theorem~\ref{strong-max-principle}, they are actually positive $T$-periodic solutions to $(\mathscr{E}_{\lambda})$ and the proof is concluded.
\end{proof}

\begin{proof}[Proof of Theorem~\ref{th-main-nex}]
As a preliminary step, we define a continuous function $\alpha \colon \mathbb{R}^{+} \to \mathopen{[}2-\eta,2\mathclose{]} \subseteq \mathopen{]}1,2\mathclose{]}$, with $\alpha(u) = 2$ for $u \in \mathopen{[}0,1\mathclose{]}$ and $\alpha(u) = 2-\eta$ for $u \in \mathopen{[}2,+\infty\mathclose{[}$. Due to the fact that $g'(u)$ is continuously differentiable on $\mathbb{R}^{+}$ together with assumption $(g_{\infty}'')$, we have
\begin{equation}\label{eq-alfa}
D_{\alpha} := \sup_{u \in \mathbb{R}^{+}}\frac{|g'(u)|}{g(u)^{2-\alpha(u)}} < +\infty.
\end{equation}

Arguing by contradiction, we now assume that there exists a sequence of positive $T$-periodic solutions $u_{n}(t)$ to 
$(\mathscr{E}_{\lambda})$ with $\lambda = \lambda_{n} \to 0^{+}$. Using the change of variable \eqref{change-var} (notice that $\vartheta = 1$) we thus obtain the equation
\begin{equation*}
z_{n}' + a(t) + \dfrac{g'(u_{n}(t))\varphi(u_{n}'(t))u_{n}'(t)}{\lambda_{n} g(u_{n}(t))^{2}} = 0
\end{equation*}
and passing to the absolute value, for any $t \in \mathopen{[}0,T\mathclose{]}$, we deduce that
\begin{equation}\label{eq-ineq}
\begin{aligned}
&| z_{n}'(t) | \leq |a(t)| + \dfrac{|g'(u_{n}(t))|\varphi(u_{n}'(t))u_{n}'(t)}{\lambda_{n} g(u_{n}(t))^{2}} 
= |a(t)| +	\\
& + \lambda_{n}^{\alpha(u_{n}(t))-1}\dfrac{|g'(u_{n}(t))| |u_{n}'(t) |^{2-\alpha(u_{n}(t))} 
| 1 - u_{n}'(t)^{2}|^{\frac{\alpha(u_{n}(t))-1}{2}}}{ g(u_{n}(t))^{2-\alpha(u_{n}(t))}} | z_{n}(t) |^{\alpha(u_{n}(t))}.
\end{aligned}
\end{equation}

Similarly as in the proof of Lemma~\ref{lem-r-C1}, we claim that 
\begin{equation*}
\|z_{n}\|_{\infty} \leq M,
\end{equation*}
where $M>\|a\|_{L^{1}_{T}} + 1$. By contradiction, suppose that this is not true. Using the fact that $z_{n}(\tilde{t}_{n}) = 0$ for some point $\tilde{t}_{n}\in\mathopen{[}0,T\mathclose{]}$, we find a maximal interval $J_{n}\subseteq \mathopen{[}0,T\mathclose{]}$ either of the form $\mathopen{[}\tilde{t}_{n},\hat{t}_{n}\mathclose{]}$ or of the form $\mathopen{[}\hat{t}_{n},\tilde{t}_{n}\mathclose{]}$, such that $|z_{n}(t)| \leq M$ for all $t\in J_{n}$ and $|z_{n}(t)| > M$ for some $t\notin J_{n}$. By the maximality of the interval $J_{n}$, we also know that $|z_{n}(\hat{t}_{n})| = M$.
Then, integrating on $J_{n}$ and using \eqref{eq-alfa} and \eqref{eq-ineq}, we obtain
\begin{equation*}
M = |z_{n}(\hat{t}_{n})| \leq \int_{J_{n}} |z_{n}'(t)| \, dt \leq \|a\|_{L^{1}_{T}} + 
\lambda_{n}^{\alpha(u_{n}(t))-1} T D_{\alpha} M^{\alpha(u_{n}(t))}.
\end{equation*}
Since $\lambda_{n} < 1$ for $n$ large and $M > 1$, we finally obtain
\begin{equation*}
M \leq \|a\|_{L^{1}_{T}} + \lambda_{n} T D_{\alpha} M^{2},
\end{equation*}
a contradiction with the choice of $M$, for $n$ large enough.

Now, we integrate \eqref{eq-z1} in $\mathopen{[}0,T\mathclose{]}$ in order to obtain, arguing as above,
\begin{equation*}
0 < -\int_{0}^{T} a(t)\,dt \leq \lambda_{n} T D_{\alpha} M^{2}.
\end{equation*}
and a contradiction is reached as $n \to \infty$.
\end{proof}

\begin{proof}[Proof of Theorem~\ref{th-main-ex-small} and Theorem~\ref{th-main-ex-large}]
We use the very same arguments as in the proof of Theorem~\ref{th-main-ex} to prove that 
the degree formula \eqref{eq-deg-small} holds true under the assumptions of Theorem~\ref{th-main-ex-small} and that \eqref{eq-deg-large} holds true under the assumptions of Theorem~\ref{th-main-ex-large}. This gives rise to a positive $T$-periodic solution $u_{s}(t)$ with $r_{0} < \| u_{s} \|_{\infty} < \rho^{*}$ in the former case and to a 
non-negative $T$-periodic solution $u_{\ell}(t)$ with $\rho^{*} < \| u_{\ell} \|_{\infty} < R_{0}$ in the latter one; in this case we further use the boundedness of $g(u)/u$ for $u$ near zero to apply the strong maximum principle in Theorem~\ref{strong-max-principle}.
\end{proof}

\begin{remark}\label{rem-3.2}
We observe that all the results in this section are valid for the equation
\begin{equation*}
(\varphi(u'))' + \lambda a(t)g(u) = 0,
\end{equation*}
where $\varphi\colon I \to \mathbb{R}$ is an increasing homeomorphism defined on a bounded interval $I \subseteq \mathbb{R}$ containing the origin, with 
$\varphi(0) = 0$ and
\begin{equation}\label{cond-phi}
0 < \liminf_{\xi \to 0}\frac{\varphi(\xi)}{\xi} \leq \limsup_{\xi \to 0}\frac{\varphi(\xi)}{\xi} < +\infty.
\end{equation}
Indeed, a careful check of the proofs shows that:
\begin{itemize}
\item the abstract degree theoretical setting is still suitable (see Remark \ref{rem-2.1}) and Lemma~\ref{lem-rho} can be proved: for all this, condition \eqref{cond-phi} does not play a role and, actually, even the boundedness of $I$ is not necessary;
\item Lemma~\ref{lem-R-ro} and Lemma~\ref{lem-R-C1} can be established in the very same way, using the fact that $I$ is bounded;
\item Lemma~\ref{lem-r-ro} and Lemma~\ref{lem-r-C1} can be proved with minor modifications of the arguments, using in an essential way condition \eqref{cond-phi}.
\end{itemize}
Then, the conclusions of Theorem~\ref{th-main-ex}, Theorem~\ref{th-main-ex-small} and Theorem~\ref{th-main-ex-large} follow by observing that the maximum principles in Appendix \ref{appendix-A} are valid in this setting (as for the strong maximum principle, one has to use once more condition \eqref{cond-phi} for verifying both $(\varphi_{*})$ and $(ii)$ of Theorem~\ref{strong-max-principle}). Theorem~\ref{th-main-nex} can also be established with minor changes in the proof.
$\hfill\lhd$
\end{remark}

\section{Asymptotic analysis for $\lambda\to+\infty$}\label{section-4}

In this section we study the asymptotic behaviour of both ``small'' and ``large'' $T$-periodic solutions to $(\mathscr{E}_{\lambda})$
when $\lambda \to +\infty$. In what follows, assumption $(a_{*})$ is going to play a crucial role and, to simplify the notation, for $i=1,\ldots,m$, we set
\begin{equation*}
I^{+}_{i} = \mathopen{[}\sigma_{i},\tau_{i}\mathclose{]} \quad \text{ and } \quad I^{-}_{i} = \mathopen{[}\tau_{i},\sigma_{i+1}\mathclose{]},
\end{equation*}
where $\sigma_{1}<\tau_{1}<\sigma_{2}<\tau_{2}<\ldots<\sigma_{m}<\tau_{m}<\sigma_{m+1} = \sigma_{1} + T$.
 
We first state and prove a result about the convergence (to zero) of ``small'' $T$-periodic solutions.

\begin{theorem}\label{th-conv-s1}
Under the assumptions of Theorem~\ref{th-main-ex-small}, let $\{u_{s,\lambda}(t)\}_{\lambda>\lambda^{*}}$ be a family of positive $T$-periodic solutions to $(\mathscr{E}_{\lambda})$ with $\|u_{s,\lambda}\|_{\infty}<\rho^{*}$. Then, for every $p \in \mathopen{[}1,+\infty\mathclose{[}$, it holds
\begin{equation*}
u_{s,\lambda}\to0 \quad \text{in $W^{1,p}_{T}$, as $\lambda\to+\infty$.}
\end{equation*}
In particular, $u_{s,\lambda}(t)$ converges to zero uniformly in $t$, as $\lambda\to+\infty$.
\end{theorem}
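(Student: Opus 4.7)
The strategy is to first establish uniform convergence $u_{s,\lambda}\to 0$ and then upgrade it to $W^{1,p}$-convergence via an energy identity.

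\textbf{Step 1: $M_\lambda := \|u_{s,\lambda}\|_\infty \to 0$.} Since $g\geq 0$ and $(a_*)$ holds, the equation $(\varphi(u'))'=-\lambda a(t)g(u)$ makes $u_{s,\lambda}$ concave on each $I^+_i$ and convex on each $I^-_j$, so $M_\lambda$ is attained on some $I^+_{i(\lambda)}$. Arguing by contradiction, assume $M_{\lambda_n}\geq \delta>0$ along a sequence $\lambda_n\to +\infty$; by finiteness of $m$, a subsequence has constant index $i(\lambda_n)\equiv i$. Applying Remark~\ref{rem-rho} with $c_1=\delta$ and $c_2=\rho^*$ gives
\[
\lambda_n \min_{u\in[2\delta\rho^*/|I^+_i|,\rho^*]} g(u)\int_{\sigma_i+2\rho^*}^{\tau_i-2\rho^*} a(t)\,dt \leq 2\varphi(1/2),
\]
whose left-hand side diverges by $(g_*)$ (the min is a positive constant) and the choice of $\rho^*$ making the integral strictly positive, a contradiction. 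Consequently $u_{s,\lambda}\to 0$ uniformly, and in particular in $L^p_T$ for every $p$.

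\textbf{Step 2: $L^p$-convergence of $u'_{s,\lambda}$.} I would multiply $(\mathscr{E}_\lambda)$ by $u_{s,\lambda}$ and integrate by parts over $[0,T]$ to obtain the energy identity
\[
\int_0^T \frac{(u'_{s,\lambda})^2}{\sqrt{1-(u'_{s,\lambda})^2}}\,dt = \lambda\int_0^T a(t)\,g(u_{s,\lambda})\,u_{s,\lambda}\,dt.
\]
The left-hand side dominates $\|u'_{s,\lambda}\|_{L^2_T}^2$, while the right-hand side is at most $\lambda\|a\|_\infty T\, M_\lambda \max_{[0,M_\lambda]} g$. A second use of Remark~\ref{rem-rho}, now with $c_2=M_\lambda$ and $c_1$ chosen so that the minimum is taken over an interval $[\alpha M_\lambda, M_\lambda]$ with $\alpha\in(0,1)$ independent of $\lambda$, yields $\lambda\min_{[\alpha M_\lambda,M_\lambda]} g = O(M_\lambda)$. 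Combining this with either the regular oscillation in $(g_0)$ (via a chaining argument across small multiplicative windows, plus the bound $g(u)=o(u)$ to handle the portion $[0,\alpha M_\lambda]$) or with the $C^1$-condition $g'(0)=0$ in $(g_0')$ (via a mean-value argument), one controls $\max_{[0,M_\lambda]} g$ by a constant multiple of $\min_{[\alpha M_\lambda,M_\lambda]} g$; hence $\lambda\, M_\lambda\max_{[0,M_\lambda]} g = O(M_\lambda^2)\to 0$, giving $u'_{s,\lambda}\to 0$ in $L^2_T$. Since $\|u'_{s,\lambda}\|_\infty<1$, interpolation produces $L^p$-convergence for every $p\in [1,+\infty[$, completing the proof.

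\textbf{Main obstacle.} The delicate point is the comparison $\max_{[0,M_\lambda]} g \leq C\,\min_{[\alpha M_\lambda,M_\lambda]} g$ under the weakest assumption $(g_0)$: regular oscillation only provides multiplicative control of $g$ on windows of ratio close to $1$, rather than on the fixed window $[\alpha,1]$, and I expect a careful chaining argument over finitely many such small windows (with the superlinear bound $g(u)/u\to 0$ handling the near-zero portion) to close this gap. Under $(g_0')$ the comparison is considerably easier and is essentially an application of the mean value theorem combined with $g'(0)=0$.
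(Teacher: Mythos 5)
Your Step 1 is correct and is essentially the paper's argument (contradiction via Remark~\ref{rem-rho} with $c_{1}$ bounded below and $c_{2}=\rho^{*}$, using the concavity/convexity structure forced by $(a_{*})$ to locate the maximum on some $I^{+}_{i}$).

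Step 2, however, has a genuine gap precisely at the point you flag as the ``main obstacle'', and the resolutions you sketch do not close it. Your scheme needs $\lambda M_{\lambda}\max_{[0,M_{\lambda}]}g\to 0$, and the only upper bound on $\lambda$ available from Remark~\ref{rem-rho} is $\lambda\min_{[\alpha M_{\lambda},M_{\lambda}]}g=O(M_{\lambda})$, where the window necessarily reaches down to a \emph{fixed fraction} of $M_{\lambda}$. The comparison $\max_{[0,M_{\lambda}]}g\leq C\min_{[\alpha M_{\lambda},M_{\lambda}]}g$ is false in the generality of the theorem. Under $(g_{0}')$ it already fails for the monotone function $g(u)=e^{-1/u}$ (which satisfies $(g_{*})$ and $(g_{0}')$): here $\min_{[\alpha M,M]}g=e^{-1/(\alpha M)}$ while $\max_{[0,M]}g=e^{-1/M}$, so the ratio blows up like $e^{(1-\alpha)/(\alpha M)}$; the mean value theorem with $g'(0)=0$ only gives the \emph{upper} bound $g=o(u)$, never a lower bound on the minimum, and with this $g$ your chain of estimates gives $\lambda M_{\lambda}\max_{[0,M_{\lambda}]}g\lesssim M_{\lambda}^{2}e^{(1-\alpha)/(\alpha M_{\lambda})}\to+\infty$. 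Under $(g_{0})$ the chaining argument does control $g$ on the fixed-ratio window $[\alpha M_{\lambda},M_{\lambda}]$, but says nothing about $[0,\alpha M_{\lambda}]$; there the bound $g(v)=o(v)$ gives only $\max_{[0,\alpha M_{\lambda}]}g=o(M_{\lambda})$, and you would then need $\lambda M_{\lambda}^{2}$ to be bounded, which is not available under $(g_{0})$ alone (the paper obtains $\lambda^{1/p}\|u_{s,\lambda}\|_{\infty}$ bounded only under the stronger power-type hypothesis $(g_{0}'')$ in Theorem~\ref{th-conv-s2}). A minor additional point: you invoke $\|a\|_{\infty}$, but $a$ is only assumed locally integrable; this is fixable by using $\|a\|_{L^{1}_{T}}$.

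The paper sidesteps all quantitative estimates on the derivative. It extracts a weak$^{*}$ limit $u_{\infty}$ in $W^{1,\infty}_{T}$ via Banach--Alaoglu (the family is bounded by $\rho^{*}$ in $L^{\infty}$ and by $1$ in the derivative), identifies $u_{\infty}\equiv 0$ by your Step 1, and then observes that, by concavity on each $I^{+}_{i}$ and convexity on each $I^{-}_{i}$, $u_{n}'$ changes sign at most once on each nodal interval. Hence $\int_{I^{\odot}_{i}}|u_{n}'|$ equals a sum of at most two terms of the form $\bigl{|}\int u_{n}'\,\mathbbm{1}_{J}\bigr{|}$, each of which tends to $0$ by weak$^{*}$ convergence of $u_{n}'$ to $0$ (with a dominated-convergence step to handle the moving sign-change point). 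Since $\|u_{n}'\|_{\infty}<1$, the $L^{1}$-convergence of $u_{n}'$ upgrades to $L^{p}$ for all finite $p$, exactly as in your interpolation remark. If you want to salvage the energy-identity route, you would need to restrict to nonlinearities with a power-type lower bound near zero such as $(g_{0}'')$.
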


\begin{proof}
We first observe that the theorem is proved if we show that any sequence $\lambda_{n} \to +\infty$ admits a subsequence $\lambda_{n_{k}} \to +\infty$ such that the corresponding sequence $(u_{s,\lambda_{n_{k}}}(t))_{k}$ goes to $0$ in $W^{1,p}_{T}$.
Hence, let us take a sequence $\lambda_{n} \to +\infty$ and set 
$u_{n}(t) := u_{s,\lambda_{n}}(t)$. Since $\|u_{n}\|_{\infty}<\rho^{*}$ and $\|u'_{n}\|_{\infty}<1$, we obtain that 
$\|u_{n}\|_{W^{1,\infty}_{T}}$ is bounded. Regarding $L^{\infty}_{T}$ as the dual space of $L^{1}_{T}$, the sequential version of the Banach--Alaoglu theorem (see \cite[Corollary~3.30]{Br-11}) implies that, up to subsequences, there exists $u_{\infty}\in W^{1,\infty}_{T}$ such that $u_{n} \to u_{\infty}$ in the weak$^{*}$ topology of $W^{1,\infty}_{T}$,
namely $u_{n} \to u_{\infty}$ uniformly and 
\begin{equation}\label{eq-4.1-conv}
\int_{0}^{T} u'_{n}(t) \psi(t)\,dt \to \int_{0}^{T} u_{\infty}'(t) \psi(t)\,dt, \quad \text{for every } \psi \in L^{1}_{T}.
\end{equation}

We claim that the sequence $\rho_{n} := \| u_{n} \|_{\infty}$ goes to zero, implying, by uniform convergence, $u_{\infty} \equiv 0$. By contradiction, suppose that there exists $\rho_{*} > 0$ such that
\begin{equation}\label{eq-rho-n}
\rho_{n} \geq \rho_{*} > 0, \quad \text{for $n$ sufficiently large.}
\end{equation}
By the convexity of $u_{n}(t)$ on $\bigcup_{i} I^{-}_{i}$, there exists $i_{n} \in \{1,\ldots,m\}$ such that
$\rho_{n} = \max_{t \in I^{+}_{i_{n}}} u_{n}(t)$. By Remark~\ref{rem-rho} with $c_{1} = \rho_{n}$ and $c_{2} = \rho^{*}$, we obtain
\begin{equation*}
\lambda_{n} \min \biggl{\{} g(u) \colon u \in \biggl{[}\frac{2 \rho_{n} \rho^{*}}{| I^{+}_{i_{n}} |},\rho^{*} \biggr{]}\biggr{\}} \int_{\sigma_{i_{n}}+2\rho^{*}}^{\tau_{i_{n}}-2\rho^{*}} a(t)\,dt \leq 2 \varphi \biggl{(} \dfrac{1}{2}\biggr{)}.
\end{equation*}
Using \eqref{eq-rho-n}, a contradiction is then obtained for $n$ large enough.

We finally claim that 
\begin{equation*}
\int_{0}^{T} | u_{n}'(t) |^{p} \,dt \to 0,
\end{equation*}
implying that $u_{n} \to 0$ in the strong topology of $W^{1,p}_{T}$ and thus concluding the proof.
Let us first observe that, since $\|u_{n}'\|_{\infty}<1$, it is sufficient to prove that $\int_{0}^{T} |u_{n}'(t)| \,dt \to 0$; precisely, we are going to prove that for any $i\in\{1,\ldots,m\}$ and $\odot\in\{+,-\}$, it holds that
\begin{equation*}
\int_{I^{\odot}_{i}} |u_{n}'(t)| \,dt \to 0.
\end{equation*}
After passing to a subsequence, two cases may occur: either $u_{n}'(t)$ has constant sign on $I^{\odot}_{i}$ for any $n$, or $u_{n}'(t)$ is sign-changing on $I^{\odot}_{i}$ for any $n$.
In the first case, the conclusion is immediate, choosing $\psi$ in \eqref{eq-4.1-conv} to be the indicator function $\mathbbm{1}_{I^{\odot}_{i}}$ of the interval $I^{\odot}_{i}$. In the second case, by a convexity argument we observe that there exists $\hat{t}_{n}\in I^{\odot}_{i}$ such that $u_{n}'(t)\,\mathrm{sign}(t-\hat{t}_{n})$ has constant sign on $I^{\odot}_{i}$. Setting $I^{\odot}_{i}=\mathopen{[}t_{1},t_{2}\mathclose{]}$, we have
\begin{equation*}
\int_{I^{\odot}_{i}} |u_{n}'(t)| \,dt = \biggl{|} \int_{t_{1}}^{\hat{t}_{n}} u_{n}'(t) \,dt \biggr{|} + \biggl{|} \int_{\hat{t}_{n}}^{t_{2}} u_{n}'(t) \,dt \biggr{|}
\end{equation*}
We are going to show that both the integrals in the right-hand side go to zero. Up to a subsequence, we assume that $\hat{t}_{n}\to\hat{t}\in I^{\odot}_{i}$, so that
\begin{equation*}
\int_{t_{1}}^{\hat{t}_{n}} u_{n}'(t) \,dt = \int_{0}^{T} u_{n}'(t) \bigl{(}\mathbbm{1}_{\mathopen{[}t_{1},\hat{t}_{n}\mathclose{]}}(t)-\mathbbm{1}_{\mathopen{[}t_{1},\hat{t}\mathclose{]}}(t)\bigr{)} \,dt + \int_{0}^{T} u_{n}'(t) \mathbbm{1}_{\mathopen{[}t_{1},\hat{t}\mathclose{]}}(t) \,dt.
\end{equation*}
The first integral in the right-hand side goes to zero via the dominated convergence theorem, and the second one goes to zero using again \eqref{eq-4.1-conv}. One can proceed in a similar manner for the remaining term. The proof is thus completed.
\end{proof}

We notice that the case $p = \infty$ (corresponding to $\mathcal{C}_{T}^{1}$-convergence) is not considered in Theorem~\ref{th-conv-s1}.
We manage to obtain this stronger conclusion, which will be essential in the argument leading to the existence of subharmonic solutions (see the proof of Lemma~\ref{lem-mu}), under an additional assumption on the behavior of $g(u)$ near zero.

\begin{theorem}\label{th-conv-s2}
Under the assumptions of Theorem~\ref{th-main-ex-small}, let us further suppose that $g(u)$ is non-decreasing in a right neighborhood of $0$ and that $(g_{0}'')$ holds true. Let $\{u_{s,\lambda}(t)\}_{\lambda>\lambda^{*}}$ be a family of positive $T$-periodic solutions to $(\mathscr{E}_{\lambda})$ with $\|u_{s,\lambda}\|_{\infty}<\rho^{*}$. Then, it holds
\begin{equation*}
u_{s,\lambda}\to0 \quad \text{in $\mathcal{C}^{1}_{T}$, as $\lambda\to+\infty$,}
\end{equation*}
and, for a suitable constant $C > 0$, 
\begin{equation}\label{eq-us}
| u_{s,\lambda}''(t) | \leq C |a(t) |, \quad \text{for a.e.~$t \in \mathbb{R}$ and for all $\lambda > \lambda^{*}$.}
\end{equation}
\end{theorem}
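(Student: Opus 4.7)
The plan is to promote the conclusions of Theorem~\ref{th-conv-s1} from $W^{1,p}_T$ to $\mathcal{C}^1_T$ by first upgrading the qualitative smallness of $\rho_\lambda := \|u_{s,\lambda}\|_\infty$ to a quantitative decay rate, which will in turn give a uniform $L^\infty$ bound on the scalar $\lambda g(u_{s,\lambda}(t))$. My first step is to invoke Remark~\ref{rem-rho} with the tightest possible choice $c_1 = c_2 = \rho_\lambda$. This is legitimate because, on each negativity interval $I^-_i$, relation \eqref{eq-usecondo} gives $u_{s,\lambda}'' \geq 0$, so $u_{s,\lambda}$ is convex there; consequently the sup-norm of $u_{s,\lambda}$ is attained at some $t^{*}_\lambda \in I^{+}_{i_\lambda}$, and Remark~\ref{rem-rho} applies on $I^+_{i_\lambda}$ with $c_1=c_2=\rho_\lambda$. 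The constraint $\rho^{*} < |I^+_i|/4$ from the proof of Lemma~\ref{lem-rho} guarantees that the interval $[2\rho_\lambda\rho^*/|I^+_{i_\lambda}|, \rho_\lambda]$ is non-degenerate, and, since $g$ is non-decreasing near $0$, the minimum of $g$ on it is $g(2\rho_\lambda\rho^*/|I^+_{i_\lambda}|)$; now $(g_0'')$ bounds this minimum from below by a constant multiple of $\rho_\lambda^p$, while $\varphi(\rho_\lambda/(2\rho^*)) \leq \rho_\lambda/\rho^{*}$ for $\rho_\lambda$ small. Combining these estimates, and taking the worst among the finitely many indices $i$, yields
\begin{equation*}
\lambda \rho_\lambda^{p-1} \leq C, \qquad \text{for all $\lambda$ large enough.}
\end{equation*}

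Having this in hand, the rest of the argument is straightforward. Since $g$ is non-decreasing near $0$ and $u_{s,\lambda}(t) \leq \rho_\lambda$, hypothesis $(g_0'')$ gives $g(u_{s,\lambda}(t)) \leq 2c_p \rho_\lambda^p$ for $\lambda$ large, hence
\begin{equation*}
\lambda g(u_{s,\lambda}(t)) \leq 2c_p \lambda \rho_\lambda^{p} \leq 2c_p C\, \rho_\lambda \longrightarrow 0, \qquad \lambda \to +\infty,
\end{equation*}
uniformly in $t$; for $\lambda$ in a bounded range $(\lambda^{*},\lambda_0]$ the quantity $\lambda g(u_{s,\lambda}(t))$ is trivially bounded by $\lambda_0 g(\rho^{*})$. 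Plugging into \eqref{eq-usecondo} and using $(1-(u_{s,\lambda}')^2)^{3/2} \leq 1$ immediately gives the pointwise estimate \eqref{eq-us}. For the $\mathcal{C}^{1}$ convergence, pick any $t_\lambda$ with $u_{s,\lambda}'(t_\lambda)=0$ (exists by $T$-periodicity) and integrate $(\varphi(u_{s,\lambda}'))' = -\lambda a g(u_{s,\lambda})$ from $t_\lambda$ to $t$:
\begin{equation*}
|\varphi(u_{s,\lambda}'(t))| \leq \lambda g(\rho_\lambda) \|a\|_{L^1_T} \leq 2c_p C \rho_\lambda \|a\|_{L^1_T} \longrightarrow 0,
\end{equation*}
uniformly in $t$. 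Continuity of $\varphi^{-1}$ at $0$ then yields $\|u_{s,\lambda}'\|_\infty \to 0$, which combined with the uniform convergence from Theorem~\ref{th-conv-s1} gives $u_{s,\lambda}\to 0$ in $\mathcal{C}^{1}_T$.

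The only genuinely delicate point is the extraction of the decay rate $\rho_\lambda \lesssim \lambda^{-1/(p-1)}$: the naive bound $g(u_{s,\lambda}) \leq g(\rho^{*})$ used in the abstract existence proof is hopeless here, since it leaves a factor of $\lambda$ in $\lambda g(u_{s,\lambda})$ that blows up. What rescues the argument is that Remark~\ref{rem-rho} provides an estimate whose right-hand side is linear in $\rho_\lambda$ while its left-hand side, through $(g_0'')$ and the monotonicity of $g$, is \emph{coercively} $p$-th power in $\rho_\lambda$; the gap between these two exponents is exactly the source of the gain. Once this is set up correctly, steps 2--4 are essentially bookkeeping from the uniform smallness of $\lambda g(u_{s,\lambda})$.
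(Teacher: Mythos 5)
Your proof is correct, but it does not reproduce the paper's argument; in fact it is both simpler and quantitatively sharper. The paper establishes its preliminary estimate by contradiction, feeding the asymmetric bracket $c_1 = \lambda^{-1/p}$, $c_2 = \|u_{s,\lambda}\|_\infty$ into Remark~\ref{rem-rho} (after scaling one sees why this choice is natural there) and concludes only that $\limsup_{\lambda\to+\infty}\lambda^{1/p}\|u_{s,\lambda}\|_\infty < +\infty$, i.e.\ $\lambda g(u_{s,\lambda})$ is \emph{bounded} but not necessarily infinitesimal. You instead plug the tightest admissible bracket $c_1 = c_2 = \rho_\lambda := \|u_{s,\lambda}\|_\infty$ into Remark~\ref{rem-rho} and, after exploiting the monotonicity of $g$ and $(g_0'')$, obtain the direct (non-contradiction) bound $\lambda\rho_\lambda^{p-1} \leq C$, equivalently $\rho_\lambda \lesssim \lambda^{-1/(p-1)}$, a strictly faster decay than the paper's $\lambda^{-1/p}$ and the one matching the natural scaling for $g(u)=u^p$. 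As you observe, this yields the extra information $\lambda g(u_{s,\lambda}(t)) = O(\rho_\lambda) \to 0$ uniformly, which the paper's weaker rate does not give, and this buys you two simplifications: the estimate \eqref{eq-us} becomes immediate (indeed with a right-hand side that tends to zero), and the $\mathcal{C}^1_T$-convergence follows by simply integrating $(\varphi(u_{s,\lambda}'))' = -\lambda a\, g(u_{s,\lambda})$ from a critical point of $u_{s,\lambda}$ and invoking continuity of $\varphi^{-1}$ at $0$. The paper instead only obtains equiboundedness of $u_{s,\lambda}''$ and must appeal to Ascoli--Arzel\`{a} (plus the $W^{1,p}_T$-convergence of Theorem~\ref{th-conv-s1} to identify the limit) to extract the uniform convergence of $u_{s,\lambda}'$. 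Both routes rest on the same core tool (Remark~\ref{rem-rho}) and both are valid; yours simply squeezes more out of it.
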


\begin{proof}
We first show that
\begin{equation}\label{eq-stima}
s_{p} := \limsup_{\lambda\to+\infty} \lambda^{\frac{1}{p}} \|u_{s,\lambda}\|_{\infty} < +\infty,
\end{equation}
where $p > 1$ is given by $(g_{0}'')$. By contradiction, let us assume that, for a sequence $\lambda_{n} \to +\infty$ and setting $u_{n}(t) := u_{s,\lambda_{n}}(t)$, it holds that
\begin{equation*}
\lim_{n \to \infty} (\lambda_{n})^{\frac{1}{p}} \| u_{n} \|_{\infty} = +\infty.
\end{equation*}
Then, for $n$ large enough,
\begin{equation*}
\| u_{n} \|_{\infty} \geq \dfrac{1}{(\lambda_{n})^{\frac{1}{p}}}.
\end{equation*}
Letting $i_{n} \in \{1,\ldots,m\}$ be such that $\max_{t \in I^{+}_{i_{n}}} u_{n}(t) = \| u_{n} \|_{\infty}$ and 
using Remark~\ref{rem-rho} with $c_{1} = (\lambda_{n})^{-\frac{1}{p}}$ and $c_{2} = \| u_{n} \|_{\infty}$, we obtain 
\begin{equation*}
\lambda_{n} \min \biggl{\{} g(u) \colon u \in \biggl{[}\frac{2\rho^{*} }{(\lambda_{n})^{\frac{1}{p}} | I^{+}_{i_{n}} |},\| u_{n} \|_{\infty} \biggr{]}\biggr{\}} \int_{\sigma_{i_{n}}+2\rho^{*}}^{\tau_{i_{n}}-2\rho^{*}} a(t)\,dt \leq 2 \varphi \biggl{(} \dfrac{\| u_{n} \|_{\infty}}{2\rho^{*}}\biggr{)}.
\end{equation*} 
Using the fact that $g(u)$ is non-decreasing in a right neighborhood of zero, we find
\begin{equation*}
\lambda_{n} g\Biggl{(}\frac{2\rho^{*} }{(\lambda_{n})^{\frac{1}{p}} | I^{+}_{i_{n}} |} \Biggr{)} \int_{\sigma_{i_{n}}+2\rho^{*}}^{\tau_{i_{n}}-2\rho^{*}} a(t)\,dt \leq 2 \varphi \biggl{(} \dfrac{\| u_{n} \|_{\infty}}{2\rho^{*}}\biggr{)},
\end{equation*}
and, finally, 
\begin{equation*}
\lambda_{n} g\Biggl{(}\frac{2\rho^{*} }{(\lambda_{n})^{\frac{1}{p}} \max_{i} | I^{+}_{i} |} \Biggr{)} \min_{i} \int_{\sigma_{i}+2\rho^{*}}^{\tau_{i}-2\rho^{*}} a(t)\,dt \leq 2 \varphi \biggl{(} \dfrac{\| u_{n} \|_{\infty}}{2\rho^{*}}\biggr{)}.
\end{equation*} 
Since $\| u_{n} \|_{\infty} \to 0$ by Theorem~\ref{th-conv-s1}, the right-hand side tends to zero, contradicting assumption $(g_{0}'')$.

Now, recalling the expression for $u''_{s,\lambda}(t)$ given by \eqref{eq-usecondo}
and using again $(g_{0}'')$ together with \eqref{eq-stima}, for $\lambda$ large enough and for a.e.~$t \in \mathbb{R}$ we obtain
\begin{align*}
|u_{s,\lambda}''(t) | & \leq \lambda \max_{t \in \mathbb{R}} g(u_{s,\lambda}(t)) |a (t)| \leq (c_{p}+1) \lambda \| u_{s,\lambda} \|_{\infty}^{p} |a (t)| \\
& \leq (c_{p}+1) (s_{p}+1)^{p} |a (t)|.
\end{align*} 
On one hand, this immediately gives 
\eqref{eq-us}; on the other hand, it implies the equicontinuity of the family $\{ u'_{s,\lambda}(t) \}_{\lambda > \lambda^{*}}$. Since
$\| u'_{s,\lambda} \|_{\infty} < 1$, Ascoli--Arzel\`{a} theorem can be applied, finally giving $u_{s,\lambda} \to 0$ in $\mathcal{C}^{1}_{T}$, as $\lambda\to+\infty$.
\end{proof}

The next result provides the convergence of ``large'' $T$-periodic solutions to $(\mathscr{E}_{\lambda})$ (see Figure~\ref{fig-03} for a graphical example).

\begin{theorem}\label{th-conv-l}
Under the assumptions of Theorem~\ref{th-main-ex-large} and assuming further that $a(t)\not\equiv0$ in any interval of $\mathbb{R}$, let $\{u_{\ell,\lambda}(t)\}_{\lambda>\lambda^{*}}$ be a family of positive $T$-periodic solutions to $(\mathscr{E}_{\lambda})$ with $\|u_{\ell,\lambda}\|_{\infty}>\rho^{*}$. 
Then,
there exists a (not identically zero) Lipschitz continuous and $T$-periodic function $u_{\infty}(t)$, with 
\begin{equation*}
u_{\infty}'(t) \in \{-1,0,1\}, \quad \text{for a.e.~$t \in \mathbb{R}$,}
\end{equation*}
such that, up to subsequences, $u_{\ell,\lambda} \to u_{\infty}$ in the strong topology of $W^{1,p}_{T}$, for any $p \in \mathopen{[}1,+\infty\mathclose{[}$, as $\lambda \to +\infty$.
\end{theorem}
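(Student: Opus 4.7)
The plan is to combine a compactness argument for the family $\{u_{\ell,\lambda}\}$ with a rescaling trick applied to $\varphi(u'_{\ell,\lambda})/\lambda$. From the bounds $\|u_{\ell,\lambda}\|_{\infty}<R_{0}$ (furnished by the existence proof of Theorem~\ref{th-main-ex-large}) and $\|u'_{\ell,\lambda}\|_{\infty}<1$, the family is uniformly Lipschitz, so by Arzel\`a--Ascoli there is a sequence $\lambda_{n}\to+\infty$ along which $u_{\ell,\lambda_{n}}\to u_{\infty}$ uniformly on $\mathopen{[}0,T\mathclose{]}$, with $u_{\infty}$ Lipschitz and $\|u_{\infty}\|_{\infty}\geq\rho^{*}>0$; in particular $u_{\infty}\not\equiv 0$. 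To gain control on the derivatives, set $v_{\lambda}(t):=\varphi(u'_{\ell,\lambda}(t))$ and $w_{\lambda}:=v_{\lambda}/\lambda$, so that $w'_{\lambda}(t)=-a(t)g(u_{\ell,\lambda}(t))$ is uniformly bounded in $L^{1}$. Since $T$-periodicity of $u_{\ell,\lambda}$ forces $u'_{\ell,\lambda}$ to vanish somewhere, so does $w_{\lambda}$, giving a uniform $C^{0}$ bound on $w_{\lambda}$; passing to a further subsequence, $w_{\lambda_{n}}\to w_{\infty}$ uniformly, with $w_{\infty}$ Lipschitz and $w'_{\infty}=-a\,g(u_{\infty})$ a.e. Moreover, the sign structure $(a_{*})$ makes $v_{\lambda}$, and hence $w_{\lambda}$ and $u'_{\ell,\lambda}$, monotone on each $I_{i}^{+}$ and on each $I_{i}^{-}$; this monotonicity passes to $w_{\infty}$.

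Decompose $\mathopen{[}0,T\mathclose{]}$ into $N^{\pm}:=\{\pm w_{\infty}>0\}$ and $N^{0}:=\{w_{\infty}=0\}$. On compact subsets of the open sets $N^{\pm}$, $v_{\lambda_{n}}=\lambda_{n}w_{\lambda_{n}}\to\pm\infty$ uniformly, whence $u'_{\ell,\lambda_{n}}=\varphi^{-1}(v_{\lambda_{n}})\to\pm 1$ uniformly; together with the dominating bound $|u'_{\ell,\lambda_{n}}|<1$, this yields $u'_{\ell,\lambda_{n}}\to\pm 1$ in $L^{p}(N^{\pm})$ and identifies $u'_{\infty}=\pm 1$ a.e.\ on $N^{\pm}$. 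By the monotonicity of $w_{\infty}$ on each $I_{i}^{\pm}$, the level set $N^{0}$ is a union of at most $2m$ closed subintervals (together with finitely many isolated points, which are negligible). On any non-degenerate component $J$ of $N^{0}$, $w_{\infty}\equiv 0$ forces $w'_{\infty}=-a\,g(u_{\infty})=0$ for a.e.\ $t\in J$; if $u_{\infty}$ were strictly positive on some open subinterval $J'\subseteq J$, continuity and $(g_{*})$ would give $g(u_{\infty})>0$ pointwise on $J'$, which would force $a\equiv 0$ a.e.\ on $J'$, contradicting the hypothesis that $a\not\equiv 0$ on any interval. Hence $u_{\infty}\equiv 0$ on $J$, and the standard rigidity property of Lipschitz functions vanishing on a set yields $u'_{\infty}=0$ a.e.\ on $J$. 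Assembling the three regions gives $u'_{\infty}\in\{-1,0,1\}$ almost everywhere.

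For the strong $W^{1,p}_{T}$ convergence it remains to upgrade the weak limit to a strong one on each component $J$ of $N^{0}$, where $u_{\ell,\lambda_{n}}\to 0$ uniformly and $u'_{\ell,\lambda_{n}}$ is monotone with values in $(-1,1)$. Helly's selection theorem applied to any subsequence produces a further subsequence converging pointwise a.e.\ on $J$ to a monotone $h\colon J\to\mathopen{[}-1,1\mathclose{]}$; passing to the limit in $u_{\ell,\lambda_{n_{k}}}(t)-u_{\ell,\lambda_{n_{k}}}(t_{1})=\int_{t_{1}}^{t}u'_{\ell,\lambda_{n_{k}}}(s)\,ds$ via dominated convergence forces $\int_{t_{1}}^{t}h=0$ for every $t\in J$, so $h\equiv 0$ a.e. Because every subsequence admits a further subsequence converging to $0$ in $L^{p}(J)$, the whole subsequence does, and dominated convergence on $N^{\pm}$ together with uniform convergence of $u_{\ell,\lambda_{n}}$ concludes $u_{\ell,\lambda_{n}}\to u_{\infty}$ in $W^{1,p}_{T}$ for every $p\in\mathopen{[}1,+\infty\mathclose{[}$.

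The main obstacle is the treatment of $N^{0}$: both additional ingredients of the theorem enter there in an essential way, with $a\not\equiv 0$ on any interval needed to rule out positive plateaus of $u_{\infty}$, while the monotonicity of $u'_{\ell,\lambda}$ on each $I_{i}^{\pm}$---a structural by-product of $(a_{*})$---is what allows one to pass from mere weak-$*$ convergence to strong $L^{p}$ convergence of the derivatives.
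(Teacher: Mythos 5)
Your proof is correct, and it is organized rather differently from the paper's. The paper gets weak$^{*}$ compactness in $W^{1,\infty}_{T}$ via Banach--Alaoglu and then argues interval by interval on each $I^{+}_{i}$, $I^{-}_{i}$, distinguishing whether $u_{\infty}$ vanishes identically there or not and whether $u_{n}'$ changes sign, obtaining pointwise convergence $u_{n}'\to\pm1$ from the integrated equation $\varphi(u_{n}'(t))=\lambda_{n}\int_{t}^{\hat t_{n}}a\,g(u_{n})$ and concluding by dominated convergence; on the pieces where $u_{\infty}\equiv0$ it recycles the weak$^{*}$ duality argument of Theorem~\ref{th-conv-s1}. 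Your pivot is the rescaled momentum $w_{\lambda}=\varphi(u_{\ell,\lambda}')/\lambda$ and its uniform limit $w_{\infty}$ with $w_{\infty}'=-a\,g(u_{\infty})$; note that $\lambda_{n}w_{\lambda_{n}}(t)$ is exactly the paper's quantity $\lambda_{n}\int_{t}^{\hat t_{n}}a\,g(u_{n})$, so the blow-up mechanism is identical, but packaging it into a single limit object lets you replace the paper's case analysis by the clean decomposition $\{w_{\infty}>0\}\cup\{w_{\infty}<0\}\cup\{w_{\infty}=0\}$, makes the role of the hypothesis that $a\not\equiv0$ on intervals transparent (it kills positive plateaus of $u_{\infty}$ inside $\{w_{\infty}=0\}$), and your Helly argument on $N^{0}$ is a genuine alternative to the paper's weak$^{*}$ testing with indicator functions. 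Two harmless imprecisions: since $a$ is only assumed locally integrable, $w_{\infty}$ is absolutely continuous rather than Lipschitz (only $w_{\infty}'=-a\,g(u_{\infty})$ a.e.\ is used); and a non-degenerate component $J$ of $N^{0}$ may straddle a junction $\tau_{i}$ between $I^{+}_{i}$ and $I^{-}_{i}$, where $u_{n}'$ is only piecewise monotone --- either split $J$ at $\tau_{i}$ or invoke Helly for functions of uniformly bounded variation.
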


\begin{proof}
We first observe that, from Lemma~\ref{lem-R-ro} or Lemma~\ref{lem-R-C1}, it holds that
\begin{equation*}
\|u_{\ell,\lambda}\|_{\infty} < R_{0}, \quad \text{for all $\lambda > \lambda^{*}$.}
\end{equation*}
We can thus argue exactly as in the proof of Theorem~\ref{th-conv-s1} to infer (via the Banach--Alaoglu theorem) that, for a subsequences $\lambda_{n} \to +\infty$, $u_{n}(t) := u_{\ell,\lambda_{n}}(t)$ converges to a Lipschitz continuous and $T$-periodic function $u_{\infty}(t)$ in the weak$^{*}$ topology of $W^{1,\infty}_{T}$. Notice that $u_{n}(t) \not\equiv 0$, since $u_{n} \to u_{\infty}$ uniformly and $\|u_{n}\|_{\infty}>\rho^{*}$ for every $n$. The rest of the proof consists in defining 
a suitable $T$-periodic function $v_{\infty}(t)$, with $v_{\infty}(t) \in \{-1,0,1\}$ 
for almost every $t \in \mathbb{R}$, such that, for any $p \in \mathopen{[}1,+\infty\mathclose{[}$,
\begin{equation*}
\int_{0}^{T} | u_{n}'(t) - v_{\infty}(t) |^{p} \,dt \to 0.
\end{equation*}
This implies, by the uniqueness of the limit, that $v_{\infty}(t) = u_{\infty}'(t)$ for a.e.~$t$ and $u_{n} \to u_{\infty}$ in the strong topology of $W^{1,p}_{T}$, thus concluding the proof.

Actually, we are going to define the function $v_{\infty}(t)$ separately in each $I^{\odot}_{i}$, where $i\in\{1,\ldots,m\}$ and $\odot\in\{+,-\}$, and then prove that
\begin{equation}\label{eq-v}
\int_{I^{\odot}_{i}} | u_{n}'(t) - v_{\infty}(t) |^{p} \,dt \to 0.
\end{equation}

We begin with the case $\odot = +$. As a preliminary observation, we notice that, by uniform convergence, $u_{\infty}(t)$ is concave on $I^{+}_{i}$. Therefore, one and only one of the following situations occurs: either $u_{\infty}(t) \equiv0$ on $I^{+}_{i}$, or $u_{\infty}(t)\neq0$ for all $t\in\mathopen{]}\sigma_{i},\tau_{i}\mathclose{[}$. In the first case, the arguments used in the proof of Theorem~\ref{th-conv-s1} yield to
\begin{equation*}
\int_{I^{+}_{i}} | u_{n}'(t) |^{p} \,dt \to 0,
\end{equation*}
so that \eqref{eq-v} follows with the position $v_{\infty}(t) := 0$ for $t \in \mathopen{]}\sigma_{i},\tau_{i}\mathclose{[}$. To treat the other case, let us further distinguish two situations (up to subsequences, they are the only possible ones).
\begin{itemize}[leftmargin=*]
\item [$\bullet$] $u_{n}'(t)$ has constant sign on $I^{+}_{i}$, for any $n$. Let us first suppose that $u_{n}'(t)\geq0$; then, for all $t\in\mathopen{]}\sigma_{i},\tau_{i}\mathclose{[}$, it holds that
\begin{equation*}
-\varphi(u_{n} '(t))\leq \varphi(u_{n} '(\tau_{i}))-\varphi(u_{n} '(t)) = - \lambda_{n} \int_{t}^{\tau_{i}} a(\xi)g(u_{n} (\xi)) \,d\xi.
\end{equation*} 
Since $u_{\infty}(t)\neq0$ for every $t\in\mathopen{]}\sigma_{i},\tau_{i}\mathclose{[}$,
the right-hand side goes to $-\infty$. Then $\varphi(u_{n} '(t))\to+\infty$ and thus $u_{n} '(t)\to1$.
By the dominated convergence theorem, we deduce that
\begin{equation*}
\int_{I^{+}_{i}} | u_{n}'(t) - 1 |^{p} \,dt \to 0, 
\end{equation*}
so that \eqref{eq-v} holds true with the position $v_{\infty}(t) := 1$ for $t \in \mathopen{]}\sigma_{i},\tau_{i}\mathclose{[}$. If $u_{n}'(t)\leq0$ for all $t\in\mathopen{[}\sigma_{i},\tau_{i}\mathclose{]}$, we proceed in a similar manner, showing that $u_{n} '(t)\to-1$, so that \eqref{eq-v} holds true with 
$v_{\infty}(t) := -1$.
\item [$\bullet$] $u_{n}'(t)$ is sign-changing on $I^{+}_{i}$, for any $n$. Then, by the concavity of $u_{n}(t)$ on $I^{+}_{i}$, 
there exists $\hat{t}_{n}\in\mathopen{]}\sigma_{i},\tau_{i}\mathclose{[}$ such that $u_{n}'(t)\geq 0$ on $\mathopen{[}\sigma_{i},\hat{t}_{n}\mathclose{]}$ and $u_{n}'(t)\leq 0$ on $\mathopen{]}\hat{t}_{n},\tau_{i}\mathclose{]}$; moreover, up to a subsequence, $\hat{t}_{n}\to\hat{t}\in I^{+}_{i}$. 
For all $t\in\mathopen{[}\sigma_{i},\hat{t}\mathclose{[}$, taking $n$ so large that $t < \hat{t}_{n}$ and using the fact that $u'_{n}(\hat{t}_{n}) = 0$, it holds that
\begin{equation*}
\varphi(u_{n} '(t)) = \lambda_{n} \int_{t}^{\hat{t}_{n}} a(\xi)g(u_{n} (\xi)) \,d\xi.
\end{equation*}
Then, since $u_{\infty}(t)\neq0$ for all $t\in \mathopen{]}\sigma_{i},\tau_{i}\mathclose{[}$, we deduce $u_{n} '(t)\to1$. Similarly, one can obtain that for all $t\in\mathopen{]}\hat{t},\tau_{i}\mathclose{[}$ it holds that $u_{n}'(t)\to-1$. 
Then, defining
\begin{equation*}
v_{\infty}(t) :=
\begin{cases}
\, 1, & \text{if $t\in\mathopen{]}\sigma_{i},\hat{t}\mathclose{[}$,} \\
\, -1, & \text{if $t\in\mathopen{]}\hat{t},\tau_{i}\mathclose{[}$,}
\end{cases}
\end{equation*}
we obtain that \eqref{eq-v} is fulfilled, by the dominated convergence theorem. 
\end{itemize}

We now deal with the case $\odot = -$. By uniform convergence $u_{\infty}(t)$ is convex on $I^{-}_{i}$; therefore, one and only one of the following situations occurs: either $u_{\infty}(t)\neq0$ for all $t\in\mathopen{]}\tau_{i},\sigma_{i+1}\mathclose{[}$, or
there exist $\kappa_{i},\nu_{i}\in I^{-}_{i}$ with $\kappa_{i}\leq \nu_{i}$ and $\mathopen{[}\kappa_{i},\nu_{i}\mathclose{]}\cap \mathopen{]}\tau_{i},\sigma_{i+1}\mathclose{[}\neq\emptyset$ such that $u_{\infty}(t)=0$ for all $t\in \mathopen{[}\kappa_{i},\nu_{i}\mathclose{]}$ and $u_{\infty}(t)\neq0$ for all $t\in\mathopen{[}\tau_{i},\sigma_{i+1}\mathclose{]}\setminus \mathopen{[}\kappa_{i},\nu_{i}\mathclose{]}$.

In the case $u_{\infty}(t)\neq0$ for all $t\in\mathopen{]}\tau_{i},\sigma_{i+1}\mathclose{[}$, we argue similarly as in the case $\odot = +$, by distinguish two situations (we give some details, since they are slightly different with respect to the previous case).
\begin{itemize}[leftmargin=*]
\item [$\bullet$] $u_{n}'(t)$ has constant sign on $I^{-}_{i}$, for any $n$. Assuming, for instance, that $u_{n}'(t)\geq0$, then, for all 
$t\in\mathopen{]}\tau_{i},\sigma_{i+1}\mathclose{[}$, it holds that
\begin{equation*}
\varphi(u_{n} '(t))\geq \varphi(u_{n} '(t))-\varphi(u_{n} '(\tau_{i})) = \lambda_{n} \int_{\tau_{i}}^{t} a(\xi)g(u_{n} (\xi)) \,d\xi.
\end{equation*} 
Since $u_{\infty}(t)\neq0$ for every $t\in\mathopen{]}\tau_{i},\sigma_{i+1}\mathclose{[}$, the right-hand side goes to $+\infty$. Then $\varphi(u_{n} '(t))\to+\infty$ and thus $u_{n} '(t)\to1$.
Arguing similarly if $u_{n}'(t)\leq0$ and using once again the dominated convergence theorem, we finally find that \eqref{eq-v} holds true with 
$v_{\infty}(t) := 1$ for $t \in \mathopen{]}\tau_{i},\sigma_{i+1}\mathclose{[}$ in the former case and $v_{\infty}(t) := -1$ in the latter one.
\item [$\bullet$] $u_{n}'(t)$ is sign-changing on $I^{-}_{i}$, for any $n$. Then, by the convexity of $u_{n}(t)$ on $I^{-}_{i}$, 
there exists $\hat{t}_{n} \in \mathopen{]}\tau_{i},\sigma_{i+1}\mathclose{[}$ such that $u_{n}'(t)\leq 0$ on $\mathopen{[}\tau_{i},\hat{t}_{n}\mathclose{]}$ and $u_{n}'(t)\geq 0$ on $\mathopen{[}\hat{t}_{n},\sigma_{i+1}\mathclose{]}$; moreover, up to a subsequence, $\hat{t}_{n}\to\hat{t}\in I^{-}_{i}$.
For all $t\in\mathopen{]}\tau_{i},\hat{t}\mathclose{[}$, taking $n$ so large that $t < \hat{t}_{n}$ and using the fact that $u'_{n}(\hat{t}_{n}) = 0$, it holds that
\begin{equation*}
\varphi(u_{n} '(t)) = -\lambda_{n} \int_{t}^{\hat{t}_{n}} a(\xi)g(u_{n} (\xi)) \,d\xi \to -\infty,
\end{equation*}
and thus $u_{n} '(t)\to - 1$. Similarly, one can obtain that for all $t\in\mathopen{]}\hat{t},\sigma_{i+1}\mathclose{[}$ it holds that $u_{n}'(t)\to 1$. The conclusion then follows as in the case $\odot = +$, by defining 
$v_{\infty}(t) = - 1$ on $\mathopen{]}\tau_{i},\hat{t}\mathclose{[}$ and $v_{\infty}(t) := 1$ on $\mathopen{]}\hat{t},\sigma_{i+1}\mathclose{[}$.
\end{itemize}

Finally, in the case $u_{\infty}(t)=0$ for all $t\in \mathopen{[}\kappa_{i},\nu_{i}\mathclose{]}$ and $u_{\infty}(t)\neq0$ for all $t\in\mathopen{[}\tau_{i},\sigma_{i+1}\mathclose{]}\setminus \mathopen{[}\kappa_{i},\nu_{i}\mathclose{]}$ we simply argue exactly as before separately in the intervals $\mathopen{[}\tau_{i},\kappa_{i}\mathclose{]}$ and $\mathopen{[}\nu_{i},\sigma_{i+1}\mathclose{]}$ and use the arguments of the proof of Theorem~\ref{th-conv-s1} to show that $\int_{\kappa_{i}}^{\nu_{i}} | u_{n}'(t) |^{p} \,dt \to 0$.
\end{proof}

\begin{figure}[htb]
\centering
\begin{tikzpicture}[scale=1]
\begin{axis}[
  tick label style={font=\scriptsize},
          scale only axis,
  enlargelimits=false,
  xtick={0,1,2,3,4},
  xticklabels={$0$, $\frac{\pi}{2}$, , , $2\pi$},
  ytick={0,4},
  xlabel={\small $t$},
  ylabel={\small $u(t)$},
  max space between ticks=50,
                minor x tick num=1,
                minor y tick num=7,  
every axis x label/.style={
below,
at={(3.5cm,0cm)},
  yshift=-3pt
  },
every axis y label/.style={
below,
at={(0cm,2.5cm)},
  xshift=-3pt},
  y label style={rotate=90,anchor=south},
  width=7cm,
  height=5cm,  
  xmin=0,
  xmax=4,
  ymin=0,
  ymax=4]
\addplot graphics[xmin=0,xmax=4,ymin=0,ymax=4] {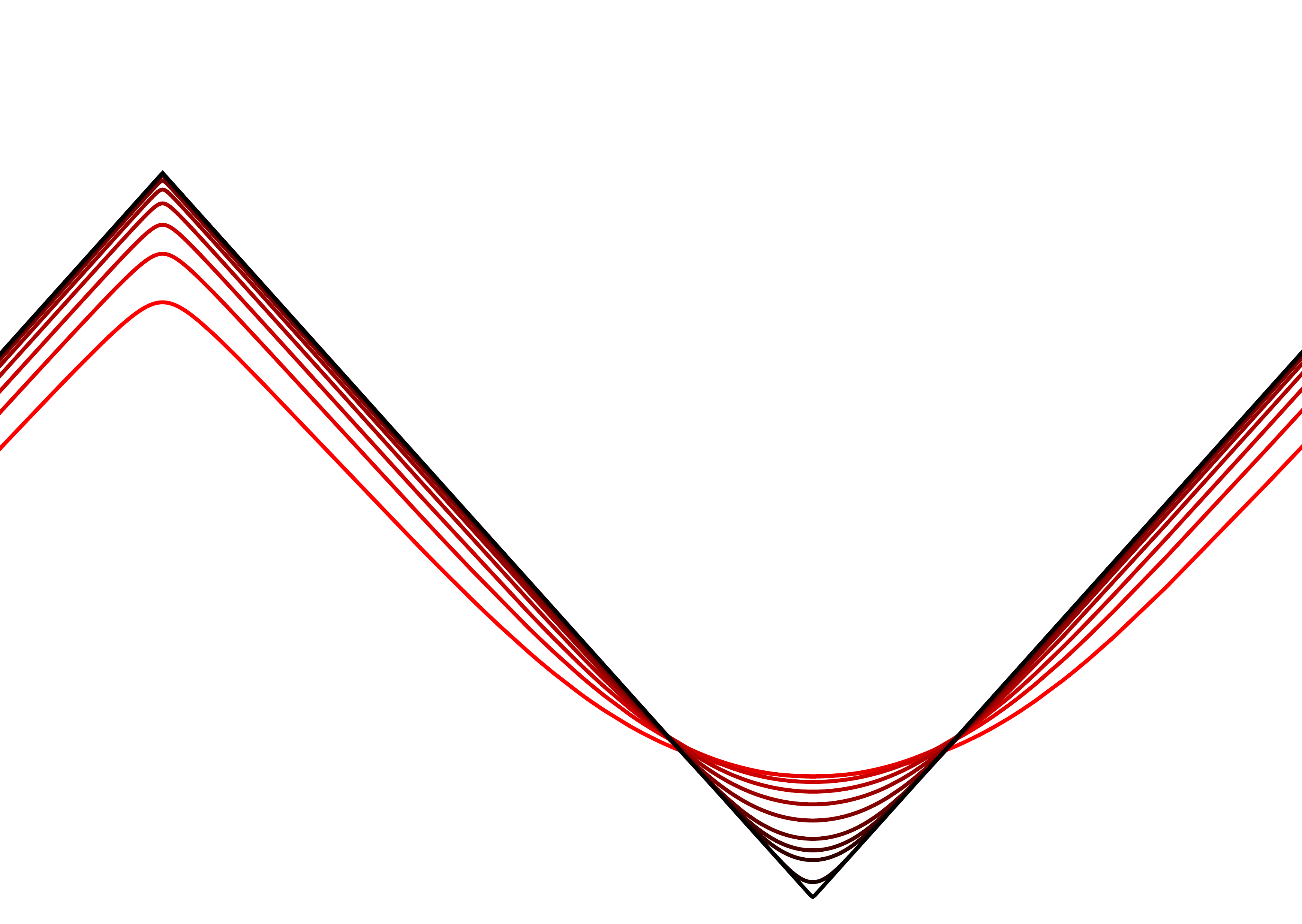};
\end{axis}
\end{tikzpicture} 
\caption{Graphs of the ``large'' $2\pi$-periodic solution $u_{\ell,\lambda}(t)$ to $(\mathscr{E}_{\lambda})$ with $g(u)=u^{3}$, $a(t)=\cos(t-\pi/4)-\sqrt{2}/2$ and $\lambda\in\{1.2,1.5,2,3,5,10,25,50,100,10^{3},10^{5}\}$.}         
\label{fig-03}
\end{figure}

\begin{remark}\label{rem-4.1}
As a by-product of the proof of Lemma~\ref{th-conv-l}, $u_{\infty}'(t)$ can vanish on a non-degenerate interval only if $u_{\infty} \equiv 0$ on that interval. This is a consequence of the assumption that $a(t)$ is not identically zero on any interval of the real line:
if such a condition is dropped, intervals on which $u_{\infty}(t)$ is a non-zero constant may appear (see Figure \ref{fig-04} for a numerical example).

We do not know if Lemma~\ref{th-conv-l} holds true under the more general assumption $(a_{*})$. A careful inspection of the proof shows that the $W^{1,p}_T$-convergence to a non-zero Lipschitz profile $u_{\infty}(t)$ is still guaranteed and that such a profile is a piecewise affine function. However, 
it seems delicate to prove that the derivative must be equal to $-1,0,1$ on intervals on which the weight function $a(t)$ vanishes. Probably, this would require a sharper analysis of the behavior of the solutions and, possibly, some ad hoc assumptions on the behavior of $a(t)$ ``near the zeros''.
$\hfill\lhd$
\end{remark}

\begin{figure}[htb]
\centering
\begin{tikzpicture}[scale=1]
\begin{axis}[
  tick label style={font=\scriptsize},
          scale only axis,
  enlargelimits=false,
  xtick={0,10},
  xticklabels={$0$, $10$},
  ytick={0,4},
  xlabel={\small $t$},
  ylabel={\small $u(t)$},
  max space between ticks=50,
                minor x tick num=9,
                minor y tick num=7,  
every axis x label/.style={
below,
at={(3.5cm,0cm)},
  yshift=-3pt
  },
every axis y label/.style={
below,
at={(0cm,2.5cm)},
  xshift=-3pt},
  y label style={rotate=90,anchor=south},
  width=7cm,
  height=5cm,  
  xmin=0,
  xmax=10,
  ymin=0,
  ymax=4]
\addplot graphics[xmin=0,xmax=10,ymin=0,ymax=4] {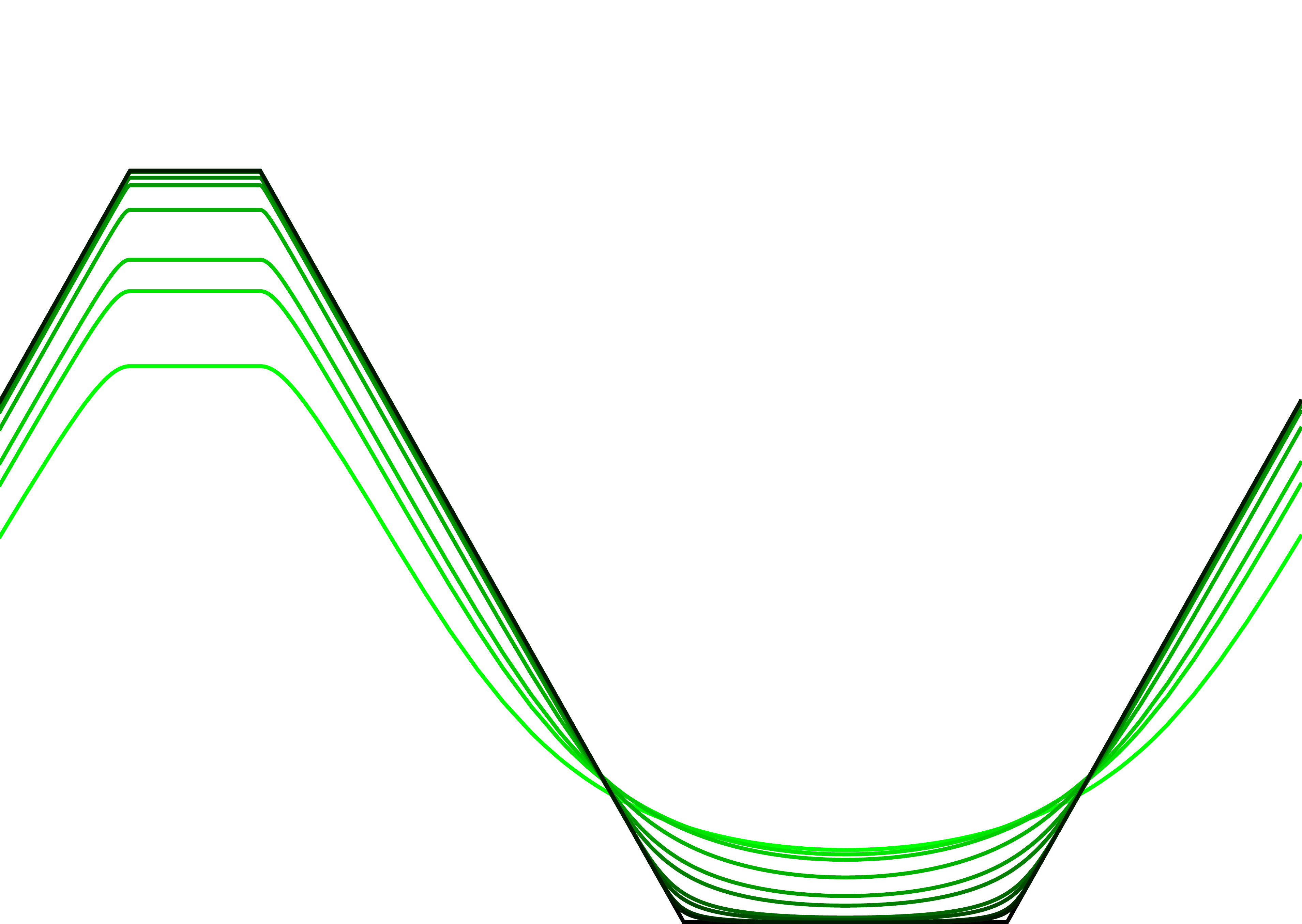};
\end{axis}
\end{tikzpicture} 
\caption{Graphs of the ``large'' $T$-periodic solution $u_{\ell,\lambda}(t)$ to $(\mathscr{E}_{\lambda})$ with $T=10$, $g(u)=u^{2}$, $a(t)=1$ on $\mathopen{[}0,1\mathclose{]}\cup\mathopen{[}2,3\mathclose{]}$, $a(t)=0$ on $\mathopen{[}1,2\mathclose{]}$ and $a(t)=-2$ on $\mathopen{[}3,10\mathclose{]}$, and $\lambda\in\{0.6,0.8,1,2,5,10,50,100,500,1000,10^{6}\}$.}      
\label{fig-04}
\end{figure}

\section{Subharmonic solutions}\label{section-5}

In this section, we present our result for positive subharmonic solutions to equation $(\mathscr{E}_{\lambda})$.

Before giving the statement, let us make the notion of subharmonic solutions precise. 
We say that a solution $u(t)$ to $(\mathscr{E}_{\lambda})$, defined on the whole real line, is a \textit{subharmonic solutions of order $k$} 
(with $k \geq 2$ an integer number) if $u(t)$ is $kT$-periodic, but not $lT$-periodic for any integer $l = 1,\ldots,k-1$, that is, 
$kT$ is the minimal period of $u(t)$ in the class of the integer multiples of $T$. If $T$ is the minimal period of $a(t)$ and $u(t)$ is a positive subharmonic solution of order $k$, by writing the equation as $\lambda a(t) = - (\varphi(u'(t)))'/g(u(t))$, it is easy to see that the minimal period of $u(t)$ is actually $kT$. Let us also observe
that if $u(t)$ is a subharmonic solution of order $k$ to $(\mathscr{E}_{\lambda})$, then the $k - 1$ functions
$u(\cdot + lT )$, for $l = 1, \ldots, k-1$, are subharmonic solutions of order $k$, as well; these solutions, though distinct, are considered equivalent from the point of view of the counting of subharmonics. Accordingly, given $u_{1}(t), u_{2}(t)$
subharmonic solutions of order $k$ to $(\mathscr{E}_{\lambda})$, we say that $u_{1}(t)$ and $u_{2}(t)$ are not in
the same periodicity class if $u_{1}(\cdot) \neq u_{2}(\cdot + lT )$ for any integer $l = 1, \ldots , k - 1$.

With this in mind, the following result holds true. Notice that we require $a \in L^{\infty}_T$, together with extra conditions on the behavior of $g(u)$ near zero, while no assumptions for $g(u)$ at infinity are imposed.

\begin{theorem}\label{th-sub}
Let $a \colon \mathbb{R} \to \mathbb{R}$ be a locally bounded $T$-periodic function satisfying $(a_{\#})$ and $(a_{*})$. 
Let $g \colon {\mathbb{R}}^{+} \to {\mathbb{R}}^{+}$ be a twice continuously differentiable function satisfying $(g_{*})$
and $g''(u) > 0$ for all $u \in \mathopen{]}0,\varepsilon\mathclose{[}$, for some $\varepsilon> 0$; moreover, assume that
$(g_{0}'')$ is fulfilled and that
\begin{equation*} 
\lim_{u \to 0^{+}} \dfrac{g'(u)}{g''(u)} = 0.
\end{equation*} 
Then, there exists $\Lambda^{*} \geq \lambda^{*}$ such that for every $\lambda>\Lambda^{*}$ equation $(\mathscr{E}_{\lambda})$ possesses positive subharmonic solutions of order $k$ for every integer $k$ large enough.

More precisely, there exists $\Lambda^{*} \geq \lambda^{*}$ such that for every family \linebreak[4] $\{u_{s,\lambda}(t)\}_{\lambda>\lambda^{*}}$ of positive $T$-periodic solutions to $(\mathscr{E}_{\lambda})$ with $\|u_{s,\lambda}\|_{\infty}<\rho^{*}$ as in Theorem~\ref{th-main-ex-small} and for every $\lambda > \Lambda^{*}$ the following holds true: there exist an integer $k^{*} \geq 1$ and a sequence of integers $(m_k)_{k \geq k^{*}}$ with $m_k \to +\infty$ such that for every integer $k \geq k^{*}$ and for every integer $j$ relatively prime with $k$ and such that $1 \leq j \leq m_{k}$, equation $(\mathscr{E}_{\lambda})$ has two positive subharmonic solutions $u_{s,\lambda,k,j}^{(i)}(t)$ $(i = 1, 2)$ of order $k$ (not belonging
to the same periodicity class) such that $u_{s,\lambda,k,j}^{(i)}(t)-u_{s,\lambda}(t)$ has exactly $2j$ zeros in the interval $\mathopen{[}0,kT\mathclose{[}$.
\end{theorem}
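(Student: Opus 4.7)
The plan follows the Poincar\'e--Birkhoff strategy sketched in the introduction and modelled on \cite{BoFe-18,BoOrZa-14,BoZa-13}. I would first recast $(\mathscr{E}_{\lambda})$ as the planar Hamiltonian system $u' = \varphi^{-1}(p)$, $p' = -\lambda a(t)g(u)$, so that the ``small'' $T$-periodic solution $u_{s,\lambda}(t)$ lifts to a $T$-periodic orbit $\gamma_{\lambda}(t) = (u_{s,\lambda}(t),\varphi(u'_{s,\lambda}(t)))$ in the phase plane $(u,p)$. After translating by $\gamma_{\lambda}$ and passing to polar-type coordinates around the origin, the analysis of rotation of trajectories near $\gamma_{\lambda}$ is governed by the variational (Sturm--Liouville) problem
\[
\bigl(\varphi'(u'_{s,\lambda}(t))w'\bigr)' + \bigl(\mu + \lambda a(t)g'(u_{s,\lambda}(t))\bigr)w = 0,
\]
whose principal $T$-periodic eigenvalue I denote by $\mu_{0}(\lambda)$.

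The crucial, and main, step is to prove that $\mu_{0}(\lambda) \to -\infty$ as $\lambda \to +\infty$; this is where I expect the real difficulty to lie. Following the algebraic trick of \cite{BrHe-90} used also in \cite{BoFe-18}, I would use $u_{s,\lambda}$ (or a suitable power thereof) as a test function in the Rayleigh quotient associated with $\mu_{0}$; integration by parts using $(\mathscr{E}_{\lambda})$ converts the resulting expression into an integral involving $(g'g)(u_{s,\lambda})$ weighted by $a(t)$, which, thanks to $(g_{0}'')$ and the assumption $g'(u)/g''(u) \to 0$, has a controllable sign. Here the refined asymptotic information obtained in Section~\ref{section-4} is essential: Theorem~\ref{th-conv-s2} provides $u_{s,\lambda}\to 0$ in $\mathcal{C}^{1}_{T}$ together with the uniform domination \eqref{eq-us} on $u''_{s,\lambda}$, which are exactly what is needed to handle the relativistic factor $\varphi'(u'_{s,\lambda})$ and to pass to the limit. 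The dynamical characterization of the principal eigenvalue developed in Appendix~\ref{appendix-C} would then turn the analytic estimate into the sign statement $\mu_{0}(\lambda) < 0$, with $|\mu_{0}(\lambda)| \to +\infty$.

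Once this is available, a standard polar coordinate computation in the phase plane $(u - u_{s,\lambda}, p - \varphi(u'_{s,\lambda}))$ shows that the rotation number of the $T$-Poincar\'e map of the Hamiltonian system on small circles around $\gamma_{\lambda}$ is comparable to $\sqrt{|\mu_{0}(\lambda)|}\,T$ and hence tends to $+\infty$ with $\lambda$; this fixes the sequence $m_{k}$ and the threshold $\Lambda^{*}$. For an outer bound, I would rely on the natural constraint $|u'| < 1$: the global a priori bound $\|u\|_{\infty} < R_{0}$ from Lemma~\ref{lem-R-ro} (or Lemma~\ref{lem-R-C1}), together with the fact that each trajectory in the strip $\{|p| < +\infty\}$ needs a time of order $\gtrsim 2R_{0}$ to traverse an interval of length $R_{0}$ in $u$, forces any sufficiently large closed curve surrounding $\gamma_{\lambda}$ inside the domain of existence to have $T$-rotation number smaller than $1/k$ (once $k$ is large enough).

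Finally, for each large $k$ and each $j\in\{1,\ldots,m_{k}\}$ coprime with $k$, the $k$-th iterate of the Poincar\'e map is an area-preserving (by the Hamiltonian structure) homeomorphism of an annulus on whose boundaries the rotation numbers straddle the integer $j$; a modern form of the Poincar\'e--Birkhoff theorem, as used in \cite{BoFe-18,BoZa-13}, then yields two geometrically distinct fixed points with rotation number exactly $j/k$. These correspond to two $kT$-periodic solutions $u^{(i)}_{s,\lambda,k,j}$ for which the difference $u^{(i)}_{s,\lambda,k,j} - u_{s,\lambda}$ winds $j$ times around the origin in the phase plane over $\mathopen{[}0,kT\mathclose{[}$, producing exactly $2j$ zeros on that interval; coprimality of $j$ and $k$ prevents $lT$-periodicity for $l < k$, securing the minimality of the period.
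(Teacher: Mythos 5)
Your overall architecture is the paper's: lift $(\mathscr{E}_{\lambda})$ to the planar Hamiltonian system, centre coordinates at the small solution $u_{s,\lambda}$, prove that the principal $T$-periodic eigenvalue $\mu_{0}$ of $\bigl(\varphi'(u'_{s,\lambda})w'\bigr)'+\bigl(\mu+\lambda a(t)g'(u_{s,\lambda})\bigr)w=0$ is negative by combining the Brezis--Hess-type algebraic trick with the $\mathcal{C}^{1}_{T}$-convergence and the bound $|u''_{s,\lambda}|\leq C|a|$ from Theorem~\ref{th-conv-s2} and the hypothesis $g'/g''\to 0$, and then apply the Poincar\'{e}--Birkhoff theorem with an outer twist coming from $|u'|<1$. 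All of this matches Lemma~\ref{lem-mu} and the proof of Theorem~\ref{th-sub}.

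There is, however, one genuine gap in the logic of the inner twist. You locate the source of the large rotation in the claim $\mu_{0}(\lambda)\to-\infty$, asserting that the rotation of the \emph{time-$T$} map near $\gamma_{\lambda}$ is of order $\sqrt{|\mu_{0}(\lambda)|}\,T$ and ``tends to $+\infty$ with $\lambda$''. But the statement to be proved fixes $\lambda>\Lambda^{*}$ once and for all and then requires subharmonics of \emph{every} large order $k$ with $m_{k}\to+\infty$ as $k\to\infty$; for a fixed $\lambda$ your mechanism yields only a fixed, finite amount of rotation per period, so the twist needed for the $k$-th iterate would force $\Lambda^{*}$ to depend on $k$ (or cap $m_{k}$), which is not the theorem. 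What is actually needed, and what the paper uses, is only the \emph{sign} $\mu_{0}<0$: by the dynamical characterization of $\mu_{0}$ in Appendix~\ref{appendix-C} (Corollary~\ref{cor-mu}), negativity of the principal eigenvalue forces the angular coordinate of the linearized flow to gain at least a fixed $\eta>0$ per period, hence at least $k\eta\to+\infty$ over $\mathopen{[}0,kT\mathclose{]}$; this, transferred to the nonlinear system on small circles via the continuous-dependence lemma of Margheri--Rebelo--Zanolin, is what produces $m_{k}\to+\infty$. You should replace the quantitative claim on $|\mu_{0}(\lambda)|$ by this iteration-in-$k$ argument. Two smaller points: the a priori bound $R_{0}$ of Lemma~\ref{lem-R-ro} concerns $T$-periodic solutions and plays no role in the outer estimate, which must control arbitrary (non-periodic) trajectories over $\mathopen{[}0,kT\mathclose{]}$ — the correct tool is the elastic property plus $|y_{1}'|<2$; and the Poincar\'{e}--Birkhoff theorem produces $kT$-periodic solutions of the \emph{extended} equation \eqref{eq-fl}, so you still need the maximum principles of Appendix~\ref{appendix-A} (with $T$ replaced by $kT$) to conclude that the subharmonics are positive.
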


\begin{remark}\label{rem-5.1}
We notice that, if $g(u)$ is twice continuously differentiable and $(g_{0}'')$ is satisfied, then
$g'(0) = 0$. Therefore, under the assumptions of Theorem~\ref{th-sub}, condition $(g_{0}')$ is satisfied and Theorem~\ref{th-main-ex-small} can be applied.
$\hfill\lhd$
\end{remark}

As already mentioned in the introduction, the proof of Theorem~\ref{th-sub} relies on the Poincar\'{e}--Birkhoff fixed point theorem, on the lines of \cite{BoFe-18,BoZa-13}. Before giving the proof, we need the following crucial lemma. 

\begin{lemma}\label{lem-mu}
Under the assumptions of Theorem~\ref{th-sub}, there exists $\Lambda^{*} \geq \lambda^{*}$ such that for every family $\{u_{s,\lambda}(t)\}_{\lambda>\lambda^{*}}$ of positive $T$-periodic solutions to $(\mathscr{E}_{\lambda})$ with $\|u_{s,\lambda}\|_{\infty}<\rho^{*}$ and for every $\lambda>\Lambda^{*}$ the principal eigenvalue $\mu_{0}$ of the linear problem
\begin{equation}\label{eq-mu}
\bigl{(} \varphi'(u_{s,\lambda}'(t))w'\bigr{)}' + \bigl{(} \mu + \lambda a(t)g'(u_{s,\lambda}(t)) \bigr{)} w = 0
\end{equation} 
is strictly negative (see Appendix \ref{appendix-C} for the definition of $\mu_{0}$; notice that the definition is well-posed since $\varphi'(\xi) \geq 1$ for any $\xi$).
\end{lemma}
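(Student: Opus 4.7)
The plan is to apply the Rayleigh quotient characterization of the principal eigenvalue (equivalent, via Appendix~\ref{appendix-C}, to the dynamical one): $\mu_0<0$ provided there exists $w\in H^1_T$, $w\not\equiv 0$, such that
\[
\mathcal{N}(w) := \int_0^T \varphi'(u_{s,\lambda}'(t))(w'(t))^2\,dt - \lambda\int_0^T a(t)\,g'(u_{s,\lambda}(t))\,w(t)^2\,dt < 0.
\]
Mimicking the algebraic trick of Brezis--Hess, I would test with $w=u_{s,\lambda}$. The key identity, obtained by differentiating $\varphi'(u_{s,\lambda}')\,u_{s,\lambda}'$ and substituting \eqref{eq-usecondo}, reads
\[
\bigl(\varphi'(u_{s,\lambda}')\,u_{s,\lambda}'\bigr)'(t) = -\lambda\,a(t)\,g(u_{s,\lambda}(t))\,\frac{1+2(u_{s,\lambda}'(t))^{2}}{1-(u_{s,\lambda}'(t))^{2}}.
\]
Multiplying by $u_{s,\lambda}$ and integrating by parts over $[0,T]$ (boundary terms vanish by $T$-periodicity) then gives
\[
\mathcal{N}(u_{s,\lambda}) = \lambda\int_0^T a(t)\,g(u_{s,\lambda})\,u_{s,\lambda}\,\biggl[\frac{1+2(u_{s,\lambda}')^{2}}{1-(u_{s,\lambda}')^{2}} - \frac{g'(u_{s,\lambda})\,u_{s,\lambda}}{g(u_{s,\lambda})}\biggr]dt.
\]

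The asymptotic analysis of Section~\ref{section-4} now enters in an essential way. Since the hypotheses of Theorem~\ref{th-sub} force $g'(0)=0$ together with $g''>0$ near zero, $g$ is non-decreasing near zero and Theorem~\ref{th-conv-s2} yields $u_{s,\lambda}\to 0$ in $\mathcal{C}^1_T$ as $\lambda\to +\infty$. Combined with $(g_0'')$ and the $\mathcal{C}^2$-regularity of $g$ (which jointly imply $g'(u)u/g(u)\to p$ as $u\to 0^+$), the square bracket converges uniformly in $t$ to $1-p<0$. Splitting
\[
\mathcal{N}(u_{s,\lambda}) = (1-p)\lambda\int_0^T a(t)\,g(u_{s,\lambda})\,u_{s,\lambda}\,dt \,+\, \lambda\int_0^T a(t)\,g(u_{s,\lambda})\,u_{s,\lambda}\,\delta_\lambda(t)\,dt,
\]
with $\|\delta_\lambda\|_\infty\to 0$, the main term is strictly negative: multiplying $(\mathscr{E}_\lambda)$ by $u_{s,\lambda}$ and integrating by parts gives
\[
\lambda\int_0^T a(t)\,g(u_{s,\lambda})\,u_{s,\lambda}\,dt = \int_0^T \varphi(u_{s,\lambda}')\,u_{s,\lambda}'\,dt > 0,
\]
since $u_{s,\lambda}$ is a non-constant $T$-periodic solution.

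The step I expect to be the main obstacle is to show that the remainder is effectively dominated by the principal term, i.e.\ that the quotient
\[
R_\lambda := \frac{\int_0^T |a(t)|\,g(u_{s,\lambda})\,u_{s,\lambda}\,dt}{\int_0^T a(t)\,g(u_{s,\lambda})\,u_{s,\lambda}\,dt}
\]
stays bounded as $\lambda\to +\infty$. To prove this I would combine (i) the convexity of $u_{s,\lambda}$ on each $I_i^-$ (which gives $u_{s,\lambda}\leq \rho_\lambda:=\|u_{s,\lambda}\|_\infty$ there, hence $\int|a|\,g(u_{s,\lambda})u_{s,\lambda}\,dt = O(\rho_\lambda^{p+1})$ via $(g_0'')$) with (ii) a matching lower bound $\int a(t)\,g(u_{s,\lambda})u_{s,\lambda}\,dt\gtrsim \rho_\lambda^{p+1}$, obtained through the natural rescaling $V_\lambda:=u_{s,\lambda}/\rho_\lambda$: this has unit sup-norm, and the rescaled equation singles out, along a subsequence, a non-trivial non-negative limit $V_\infty$ solving $V''+c_p\,a(t)\,V^p=0$. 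Multiplying by $V_\infty$ and integrating gives $\int a\,V_\infty^{p+1}\,dt = c_p^{-1}\int (V_\infty')^2\,dt>0$ (since $V_\infty$ is non-constant), and this transfers back to the required lower bound for the original integral. With $R_\lambda$ bounded and $\|\delta_\lambda\|_\infty\to 0$, the remainder is strictly smaller, in modulus, than the principal term for $\lambda$ large enough, so $\mathcal{N}(u_{s,\lambda})<0$ and thus $\mu_0<0$, as desired.
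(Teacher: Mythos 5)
Your approach is structurally different from the paper's and, as written, has a genuine gap in the final step. The paper does \emph{not} test the Rayleigh quotient with $w=u_{s,\lambda}$; instead, it multiplies $(\mathscr{E}_{\lambda})$ by $g'(u_{s,\lambda})w_{\lambda}$ and \eqref{eq-mu} by $g(u_{s,\lambda})$ (where $w_{\lambda}>0$ is the principal eigenfunction), subtracts, integrates by parts twice, and lands on
\begin{equation}
\mu_{0}\int_{0}^{T} w_{\lambda}\,g(u_{s,\lambda}) = -\int_{0}^{T}\frac{(u_{s,\lambda}')^{2}}{(1-(u_{s,\lambda}')^{2})^{3/2}}\,g''(u_{s,\lambda})\,w_{\lambda} - \int_{0}^{T}\frac{3(u_{s,\lambda}')^{2}}{(1-(u_{s,\lambda}')^{2})^{5/2}}\,u_{s,\lambda}''\,g'(u_{s,\lambda})\,w_{\lambda}.
\end{equation}
The sign of the right-hand side is then established \emph{pointwise}: using $u_{s,\lambda}\to 0$ in $\mathcal{C}^{1}_{T}$, the bound $|u_{s,\lambda}''(t)|\le C\,\|a\|_{\infty}$ from Theorem~\ref{th-conv-s2}, $g''>0$ near zero and $g'/g''\to 0$, the second integrand is dominated in absolute value by the first one at a.e.\ $t$, so the whole right-hand side is strictly negative and $\mu_{0}<0$ follows at once. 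No comparison between integrals over the $a>0$ and $a<0$ regions is needed.

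Your derivation up to the formula
\begin{equation}
\mathcal{N}(u_{s,\lambda}) = \lambda\int_{0}^{T} a\,g(u_{s,\lambda})\,u_{s,\lambda}\Bigl[\frac{1+2(u_{s,\lambda}')^{2}}{1-(u_{s,\lambda}')^{2}} - \frac{g'(u_{s,\lambda})\,u_{s,\lambda}}{g(u_{s,\lambda})}\Bigr]\,dt
\end{equation}
is correct, and using $\lambda\int a\,g(u_{s,\lambda})u_{s,\lambda}=\int\varphi(u_{s,\lambda}')u_{s,\lambda}'>0$ is a nice observation. But the split into ``main term plus remainder'' leaves you needing the boundedness of $R_{\lambda}=\int|a|g(u)u\,/\!\int a\,g(u)u$, and this is exactly where the argument breaks. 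Your blow-up argument with $V_{\lambda}=u_{s,\lambda}/\rho_{\lambda}$ does not close: the rescaled equation reads $V_{\lambda}''\approx -\lambda\rho_{\lambda}^{\,p-1}c_{p}\,a\,V_{\lambda}^{p}$, and the coefficient $\lambda\rho_{\lambda}^{\,p-1}=(\lambda\rho_{\lambda}^{p})/\rho_{\lambda}$ is \emph{not} known to converge (from \eqref{eq-stima} one only gets $\lambda\rho_{\lambda}^{p}$ bounded above; dividing by $\rho_{\lambda}\to 0$ gives no control). For the same reason $\|V_{\lambda}'\|_{\infty}=\|u_{s,\lambda}'\|_{\infty}/\rho_{\lambda}$ is not obviously bounded, so a compactness argument producing a limit profile $V_{\infty}$ solving $V''+c_{p}aV^{p}=0$ is not available. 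Since the whole conclusion rests on that lower bound, this is a genuine gap, not merely a missing detail. A secondary issue is the assertion that $(g_{0}'')$ together with $\mathcal{C}^{2}$-regularity ``jointly imply'' $g'(u)u/g(u)\to p$: the power-type asymptotics of $g$ do not automatically transfer to $g'$ (this is L'H\^{o}pital in the wrong direction); some extra argument invoking $g''>0$ and $g'/g''\to 0$ would be needed to justify even the uniform negativity of the bracket. The moral difference between your route and the paper's is precisely that the paper avoids \emph{both} of these obstacles by pairing the eigenfunction $w_{\lambda}$ with $g$ and $g'$ so that $g''$ appears with a definite sign pointwise.
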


\begin{proof}
First, we are going to establish a preliminary integral relationship.
Let $\mu_{0}$ be the principal eigenvalue of the $T$-periodic problem for \eqref{eq-mu} and let $w_{\lambda}(t)$ be an associated positive $T$-periodic solution. Without loss of generality, we also assume that $\| w_{\lambda} \|_{\infty} = 1$. By multiplying equation $(\mathscr{E}_{\lambda})$ by $g'(u_{s,\lambda}) w_{\lambda}$ and equation \eqref{eq-mu} (for $\mu = \mu_{0}$) by $g(u_{s,\lambda})$, we obtain
\begin{equation*} 
\bigl{(} \varphi(u_{s,\lambda}')\bigr{)}' g'(u_{s,\lambda}) w_{\lambda} + \lambda a(t)g(u_{s,\lambda}) g'(u_{s,\lambda}) w_{\lambda} = 0
\end{equation*}
and
\begin{equation*} 
\bigl{(} \varphi'(u_{s,\lambda}')w_{\lambda}'\bigr{)}' g(u_{s,\lambda})+ \bigl{(} \mu_{0} + \lambda a(t)g'(u_{s,\lambda}) \bigr{)} g(u_{s,\lambda})w_{\lambda}= 0, 
\end{equation*}
where, above and from now on, we omit the dependence on $t$ in all the considered functions.
Subtracting the first equation from the second one and integrating on $\mathopen{[}0,T\mathclose{]}$, we obtain
\begin{equation*}
\mu_{0} \int_{0}^{T} w_{\lambda} g(u_{s,\lambda}) = \int_{0}^{T} \Bigl{[} g'(u_{s,\lambda}) w_{\lambda} (\varphi(u_{s,\lambda}'))' - (\varphi'(u_{s,\lambda}')w_{\lambda}')' g(u_{s,\lambda})\Bigr{]}.
\end{equation*}
Integrating by parts, we write the right-hand side as
\begin{equation*}
\begin{aligned}
& \int_{0}^{T} \Bigl{[} - \varphi(u_{s,\lambda}') \bigl{(}g''(u_{s,\lambda}) u_{s,\lambda}'w_{\lambda} + g'(u_{s,\lambda})w_{\lambda}' \bigr{)}
+ \varphi'(u_{s,\lambda}')w_{\lambda}' g'(u_{s,\lambda})u_{s,\lambda}'\Bigr{]} 
\\ &= - \int_{0}^{T} \varphi(u_{s,\lambda}') u_{s,\lambda}' g''(u_{s,\lambda}) w_{\lambda} 
+ \int_{0}^{T} g'(u_{s,\lambda})w_{\lambda}' \bigl{(} \varphi'(u_{s,\lambda}')u_{s,\lambda}' - \varphi(u_{s,\lambda}')\bigr{)}. 
\end{aligned}
\end{equation*}
Integrating again by parts, we write the second integral in the right-hand side of the above equality as
\begin{equation*}
-\int_{0}^{T} w_{\lambda} \Bigl{[} g''(u_{s,\lambda})u_{s,\lambda}' \bigl{(} \varphi'(u_{s,\lambda}')u_{s,\lambda}' - \varphi(u_{s,\lambda}')\bigr{)} + g'(u_{s,\lambda}) \varphi''(u_{s,\lambda}')u_{s,\lambda}' u_{s,\lambda}'' \Bigr{]}. 
\end{equation*}
Using the explicit expression of $\varphi$, simple computations finally lead to
\begin{equation*}
\begin{aligned}
& \mu_{0} \int_{0}^{T} w_{\lambda} g(u_{s,\lambda}) = \\
& = -\int_{0}^{T} \dfrac{(u_{s,\lambda}')^{2}}{(1-(u_{s,\lambda}')^{2})^{\frac{3}{2}}} g''(u_{s,\lambda}) w_{\lambda}
- \int_{0}^{T} \dfrac{3(u_{s,\lambda}')^{2}}{(1-(u_{s,\lambda}')^{2})^{\frac{5}{2}}} u_{s,\lambda}'' g'(u_{s,\lambda}) w_{\lambda}.
\end{aligned}
\end{equation*}

We are now in a position to conclude. By contradiction, assume that there exist a sequence $\lambda_{n} \to +\infty$ and a sequence $u_{n}(t) := u_{s,\lambda_{n}}(t)$ of positive $T$-periodic solutions to $(\mathscr{E}_{\lambda})$ with $\|u_{n}\|_{\infty}<\rho^{*}$ such that the principal eigenvalue $\mu_{0,n}$ is non-negative. Let us also denote by $w_{n}(t) := w_{\lambda_{n}}(t)$ a corresponding positive eigenfunction with $\|w_{n}\|_{\infty} = 1$.
Since $g'(0) = 0$ and $g''(u) > 0$ for $u \in \mathopen{]}0,\varepsilon \mathclose{]}$, the function
$g(u)$ is strictly increasing on $\mathopen{[}0,\varepsilon \mathclose{]}$; accordingly, Theorem~\ref{th-conv-s2} can be applied yielding
$u_{n}\to0$ in $\mathcal{C}_{T}^{1}$ and $|u_{n}''(t)|\leq C \| a \|_{\infty}$ for a.e.~$t \in \mathbb{R}$ and $n$ large enough.
Using the fact that $g'(u)/g''(u) \to 0$ for $u \to 0^{+}$, we thus have, for every $t \in \mathbb{R}$ and $n$ large enough, 
\begin{equation*}
| g'(u_{n}(t)) | < \frac{g''(u_{n}(t))}{3C\| a \|_{\infty}}, \quad \text{for all $t \in \mathbb{R}$,}
\end{equation*}
so that, for a.e.~$t \in \mathbb{R}$ and $n$ large enough,
\begin{equation*}
\Biggl{|} \dfrac{3 u_{n}'(t)^{2}}{(1-u_{n}'(t)^{2})^{\frac{5}{2}}} u_{n}''(t) g'(u_{n}(t)) w_{n}(t) \Biggr{|}
< \dfrac{u_{n}'(t)^{2}}{(1-u_{n}'(t)^{2})^{\frac{3}{2}}} g''(u_{n}(t)) w_{n}(t).
\end{equation*}
Recalling the integral relationship obtained in the first part of the proof, we thus find
\begin{equation*}
\mu_{0,n} \int_{0}^{T} w_{n}(t) g(u_{n}(t)) \,dt > 0,
\end{equation*}
a contradiction.
\end{proof}

\begin{remark}\label{rem-5.2}
From a variational viewpoint, Lemma~\ref{lem-mu} implies that any ``small'' solution $u_{s,\lambda}(t)$, when regarded as a critical point of the action functional
\begin{equation*}
\mathcal{J}(u) = \int_{0}^{T} \Bigl{(} 1 - \sqrt{1-u'(t)^{2}} - \lambda a(t)G(u(t))\Bigr{)} \,dt, \quad u \in \mathcal{X},
\end{equation*}
is not a local minimum for $\lambda > \Lambda^{*}$ (here, $\mathcal{X}:=\{u\in W^{1,\infty}_{T} \colon \|u'\|_{\infty}<1\}$ and $G(u) := \int_{0}^{u} g(s^+)\,ds$). Indeed, it is easy to see that, if $w(t)$ is a positive $T$-periodic solution to \eqref{eq-mu} with $\mu = \mu_{0} < 0$, then $d^{2} \mathcal{J}(u_{s,\lambda})[w,w] < 0$. The variational characterization of ``large'' solutions, on the contrary, seems to be a delicate and challenging problem.
$\hfill\lhd$
\end{remark}

We are now in a position to give the proof of Theorem~\ref{th-sub}.

\begin{proof}[Proof of Theorem~\ref{th-sub}]
We split the proof in some steps.

\smallskip
\noindent
\textit{Definition of the Poincar\'{e} map.} Let us consider the planar system \eqref{system}
and observe that, given any integer $k \geq 1$, any $kT$-periodic solution $(x_{1}(t),x_{2}(t))$ gives rise to a positive $T$-periodic solution $u(t) := x_{1}(t)$ to $(\mathscr{E}_{\lambda})$, by the maximum principles in Appendix~\ref{appendix-A} (just replacing $T$ with $kT$). We also highlight that system \eqref{system} has Hamiltonian structure, namely 
\begin{equation*} 
x_{1}' = \partial_{x_{2}}H(t,x_{1},x_{2}), \qquad x_{2}' = - \partial_{x_{1}}H(t,x_{1},x_{2}),
\end{equation*} 
where $H(t,x_{1},x_{2}) = \int_{0}^{x_{2}} \varphi^{-1}(s)\,ds + \int_{0}^{x_{1}} f_{\lambda}(t,s)\,ds$, with $f_{\lambda}(t,u)$ given by \eqref{def-fl}.

We claim that, for any $t_{0} \in \mathbb{R}$ and any $(x_{0,1},x_{0,2}) \in \mathbb{R}^{2}$, the solution 
$x(\cdot;t_{0},(x_{0,1},x_{0,2}))$ of the Cauchy problem associated with \eqref{system} and with initial condition $(x_{1}(t_{0}),x_{2}(t_{0})) = (x_{0,1},x_{0,2})$ is unique and globally defined on the whole real line. Indeed, the uniqueness directly comes from the local Lipschitz continuity of the right-hand side of \eqref{system}. As for the global continuability, we observe that, due to $| x_{1}'(t) | < 1$, the estimates
\begin{equation*} 
|x_{1}(t)| \leq |x_{1}(t_{0})| + |t - t_{0} | =: m_{t_0}(t),
\end{equation*} 
and, consequently,
\begin{align*} 
|x_{2}(t)| 
&\leq |x_{2}(t_{0})| + \biggl{|} \int_{t_{0}}^{t}f_{\lambda}(\xi,x_{1}(\xi))\,d\xi \biggr{|} 
\\& \leq |x_{2}(t_{0})| + \max\biggl{\{} |t - t_{0} | m_{t_0}(t), \lambda \max_{0 \leq u \leq m_{t_0}(t)} g(u) \int_{t_{0}}^{t} a(\xi) \,d\xi \biggr{\}}
\end{align*} 
hold true for any $t$ in the maximal interval of definition of $(x_{1}(t),x_{2}(t))$, which therefore cannot explode in finite time.

As a consequence, for any integer $k \geq 1$ the Poincar\'{e} map
\begin{equation*} 
\Phi^{k} \colon \mathbb{R}^{2} \to \mathbb{R}^{2}, \quad (x_{0,1},x_{0,2}) \mapsto \bigl{(}x_{1}(kT;0,(x_{0,1},x_{0,2})),x_{2}(kT;0,(x_{0,1},x_{0,2}))\bigr{)}
\end{equation*} 
can be defined. The standard theory of initial value problems ensures that such a map is a global homeomorphism; moreover, by the Hamiltonian structure of system \eqref{system}, $\Phi^{k}$ is an area-preserving map. We also observe that the $T$-periodic solution $u_{s,\lambda}(t)$ to $(\mathscr{E}_{\lambda})$ gives rise to a fixed point $(\bar{x}_{1}, \bar{x}_{2}) := (u_{s,\lambda}(0),\varphi(u'_{s,\lambda}(0))$ of the map 
$\Phi^{k}$. 

For a more direct application of the Poincar\'{e}--Birkhoff fixed point theorem, we find convenient to move such a fixed point to the origin via a linear change of variable. Namely, we deal with the area-preserving homeomorphism 
\begin{equation*} 
\Psi^{k} \colon \mathbb{R}^{2} \to \mathbb{R}^{2}, \quad (y_{1,0},y_{2,0}) \mapsto \Phi^{k}(y_{1,0} + \bar{x}_{1},y_{2,0} + \bar{x}_{2}) - (\bar{x}_{1}, \bar{x}_{2}),
\end{equation*} 
which in turn can be meant as the Poincar\'{e} operator (at time $kT$) of the system
\begin{equation}\label{eq-hs2}
\begin{cases}
\, y_{1}' = \varphi^{-1}(y_{2} + \varphi(u'_{s,\lambda}(t))) - u_{s,\lambda}'(t) \\
\, y_{2}' = -f_{\lambda}(t,y_{1} + u_{s,\lambda}(t)) + f_{\lambda}(t,u_{s,\lambda}(t)),
\end{cases}
\end{equation}
arising from \eqref{system} after the change of variable
\begin{equation*} 
y_{1}(t) = x_{1}(t) - u_{s,\lambda}(t), \qquad y_{2}(t) = x_{2}(t) - \varphi(u'_{s,\lambda}(t)).
\end{equation*} 
We notice that $(0,0)$ is now a solution of the (Hamiltonian) system \eqref{eq-hs2}; as a consequence, any non-trivial solution $(y_{1}(t),y_{2}(t))$ satisfies $(y_{1}(t),y_{2}(t)) \neq 0$ for every $t \in \mathbb{R}$ and thus has an associated winding number around the origin. As shown in \cite{Bo-11,MaReZa-02}, via the Poincar\'{e}--Birkhoff theorem, in this setting $kT$-periodic solutions can be provided whenever a suitable twist condition between the winding numbers of solutions to \eqref{eq-hs2} departing from a small and a large circle centered at the origin is satisfied.

\smallskip
\noindent
\textit{Proof of the twist condition.} We pass to (clockwise) polar coordinates
\begin{equation*} 
(y_{1}(t),y_{2}(t)) = \ell(t) (\cos \theta(t), - \sin \theta(t)), \quad \ell(t) > 0,
\end{equation*} 
and we claim that
\begin{itemize}
\item [$(i)$] there exist an integer $k^{*} \geq 1$ and a sequence of integers $(m_k)_{k \geq k^{*}}$ with $m_k \to +\infty$ such that for any integer $k \geq k^{*}$ there exists a number $\ell_{*} > 0$ such that any solution to \eqref{eq-hs2} with $\ell(0) = \ell_{*}$ satisfies
\begin{equation*}
\theta(kT) - \theta(0) > 2 m_{k} \pi;
\end{equation*}
\item [$(ii)$] for every integer $k \geq k^{*}$ there exists a number $\ell^{*} > 0$ such that every solution to \eqref{eq-hs2} with $\ell(0) = \ell^{*}$ satisfies
\begin{equation}\label{eq-twist-l}
\theta(kT) - \theta(0) < 2\pi.
\end{equation}
\end{itemize} 
To prove $(i)$, we first recall that from \cite[Lemma~3]{MaReZa-02}\footnote{Some care is needed in applying this result to our setting, since the discussion in \cite{MaReZa-02} is developed in the context of Hamiltonian systems with Hamiltonian function continuous in the time variable $t$. However, the continuous dependence arguments used in the proof of \cite[Lemma~3]{MaReZa-02} still hold true, with minor changes, in a Carath\'{e}odory setting. This has been already observed and used, dealing with a semilinear second order equation, in \cite{BoFe-18} (see, precisely, the proof of Claim~(A1) of Proposition~2.1).} it is enough to show that the same property holds for the solutions of the (linear) system obtained by linearizing \eqref{eq-hs2} around the constant solution $(0,0)$, that is,
\begin{equation*} 
\begin{cases}
\, z_{1}' = (\varphi^{-1})'(\varphi(u'_{s,\lambda}(t)))z_{2} \\
\, z_{2}' = -\lambda a(t) g'(u_{s,\lambda}(t)) z_{1}. 
\end{cases}
\end{equation*} 
Now, we observe that this system corresponds to the linear Sturm--Liouville equation 
\begin{equation*}
(\varphi'(u_{s,\lambda}'(t))w')' + \lambda a(t)g'(u_{s,\lambda}(t)) w = 0.
\end{equation*} 
By Lemma~\ref{lem-mu}, the principal eigenvalue associated with the above equation is strictly negative and the thesis thus follows from Corollary~\ref{cor-weak-max-principle} in Appendix~\ref{appendix-A}.

To prove $(ii)$, we first observe that, due to the global continuability of the solutions to \eqref{eq-hs2}, for any $k \geq k^{*}$ there exists $\ell^{*}> 0$ such that any solution to \eqref{eq-hs2} with $\ell(0) = \ell^{*}$ satisfies 
\begin{equation}\label{eq-elastic}
\ell(t) \geq kT, \quad \text{for all $t \in \mathopen{[}0,kT\mathclose{]}$}
\end{equation}
(see the ``elastic property'' in \cite[p.~88]{Bo-11}). Assuming by contradiction that \eqref{eq-twist-l} is violated, we deduce the existence of $t_{1},t_{2} \in \mathopen{[}0,kT\mathclose{]}$ such that
\begin{equation*}
(y_{1}(t_{1}),y_{2}(t_{1})) \in \mathopen{]}-\infty,0\mathclose{[} \times\{0\} \quad \text{ and } \quad (y_{1}(t_{2}),y_{2}(t_{2})) \in \mathopen{]}0,+\infty\mathclose{[} \times\{0\} .
\end{equation*}
Then, from \eqref{eq-elastic} we obtain $\ell(t_{i}) = | y_{1}(t_{i}) | \geq kT$, for $i=1,2$, so that
\begin{equation*}
y_{1}(t_{2}) - y_{1}(t_{1}) \geq 2kT.
\end{equation*}
However, since from the first equation in \eqref{eq-hs2} we have $| y_{1}'(t) | < 2$ for any $t$, we find
\begin{equation*}
y_{1}(t_{2}) - y_{1}(t_{1}) \leq \biggl{|} \int_{t_{1}}^{t_{2}} |y_{1}'(t) | \,dt \biggr{|} < 2kT
\end{equation*}
and a contradiction is reached.

\smallskip
\noindent
\textit{Conclusion.} From the Poincar\'{e}--Birkhoff theorem (in the generalized
version for non-invariant annuli, see \cite{Bo-11,BoZa-13,FoSaZa-12,FoUr-17,MaReZa-02} and the references therein for a detailed description of this technique), it follows that, for any $k \geq k^{*}$ and any $1 \leq j \leq m_{k}$, there exist two $kT$-periodic solutions 
$(y_{1,\lambda,k,j}^{(i)}(t),y_{2,\lambda,k,j}^{(i)}(t))$ to \eqref{eq-hs2} ($i=1,2$), not belonging to the same periodicity class and such that the corresponding angular coordinate satisfies 
\begin{equation*}
\theta(kT) - \theta(0) = 2j\pi.
\end{equation*}
On one hand, this easily implies that, when $j$ and $k$ are relatively prime, these solutions cannot be $lT$-periodic for any $l = 1,\ldots,k-1$. On the other hand, since the angular coordinate $\theta(t)$ is strictly increasing at any $t_{0}$ such that $\theta(t_{0}) \in \tfrac{\pi}{2} + \mathbb{Z}$ (following from the facts that $y_{1}' > 0$ if and only if $y_{2} > 0$, and that $y_{1}' < 0$ if and only if $y_{2} < 0$), it gives that $y_{1,\lambda,k,j}^{(i)}(t)$
has exactly $2j$ zeros on $\mathopen{[}0,kT\mathclose{[}$. Going back to $x_{1,\lambda,k,j}^{(i)}(t) = y_{1,\lambda,k,j}^{(i)}(t) + u_{s,\lambda}(t)$ and defining $u_{s,\lambda,k,j}^{(i)}(t) := x_{1,\lambda,k,j}^{(i)}(t)$, the proof is thus concluded.
\end{proof}

\appendix
\section{Maximum principles}\label{appendix-A}

This appendix is devoted to maximum principles for the boundary value problem 
\begin{equation}\label{pb-appB}
\begin{cases}
\,(\varphi(u'))'+f(t,u)=0 \\
\,\mathfrak{B}(u,u')=(0,0),
\end{cases}
\end{equation}
where 
\begin{itemize}
\item $\varphi\colon I \to J$ is an increasing homeomorphism between two open intervals $I,J\subseteq\mathbb{R}$ both containing $0$, with $\varphi(0)=0$;
\item $f\colon \mathopen{[}0,T\mathclose{]}\times \mathbb{R}\to \mathbb{R}$ is an $L^{1}$-Ca\-ra\-th\'{e}o\-dory function, that is, $f(t,u)$ is measurable in $t$ for every $u$, continuous in $u$ for a.e.~$t$ and, for every $r > 0$, there exists $\zeta_{r} \in L^{1}(\mathopen{[}0,T\mathclose{]},\mathbb{R}^{+})$ such that $| f(t,u) | \leq \zeta_{r}(t)$ for a.e.~$t$ and $|u| \leq r$;
\item $\mathfrak{B}(u,u')$ defines the boundary conditions, precisely
\begin{equation}\label{BC}
\begin{aligned}
\mathfrak{B}(u,u')\in\Bigl{\{}&\bigl{(}u(0),u(T)\bigr{)}, \bigl{(}u(0),u'(T)\bigr{)}, \bigl{(}u'(0),u(T)\bigr{)},
\\ &\bigl{(}u'(0),u'(T)\bigr{)},\bigl{(}u(0)-u(T),u'(0)-u'(T)\bigr{)}\Bigr{\}}.
\end{aligned}
\end{equation}
\end{itemize}
We notice that we cover the Dirichlet, Neumann and periodic boundary value problems.

The first result is a weak maximum principle that ensures the non-negativity of non-constant solutions to \eqref{pb-appB}. 

\begin{theorem}[Weak maximum principle]\label{weak-max-principle}
Let $f\colon \mathopen{[}0,T\mathclose{]}\times \mathbb{R}\to \mathbb{R}$ be an $L^{1}$-Ca\-ra\-th\'{e}o\-dory function such that
\begin{equation*}
f(t,u)\geq0, \quad \text{a.e.~$t\in\mathopen{[}0,T\mathclose{]}$, for all $u<0$.}
\end{equation*}
Then, if $u(t)$ is a solution to \eqref{pb-appB}, either $u(t)$ is non-negative on $\mathopen{[}0,T\mathclose{]}$ (namely $u(t)\geq0$ for all $t\in\mathopen{[}0,T\mathclose{]}$) or $u\equiv u_{0}$ with $u_{0}<0$.
\end{theorem}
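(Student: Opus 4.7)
The plan is to argue by contradiction: suppose $u$ is a solution with $u(t_{1}) < 0$ at some $t_{1} \in \mathopen{[}0,T\mathclose{]}$, set $m := \min_{\mathopen{[}0,T\mathclose{]}} u < 0$, and let $t^{*} \in \mathopen{[}0,T\mathclose{]}$ be a minimizer. I aim to show $u \equiv m$ on $\mathopen{[}0,T\mathclose{]}$, which together with the boundary data will force the conclusion. Note that $u \in \mathcal{C}^{1}(\mathopen{[}0,T\mathclose{]})$, since $\varphi(u') \in W^{1,1}$ and $\varphi^{-1}$ is continuous.

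The first step is to verify $u'(t^{*}) = 0$, via a case analysis on the position of $t^{*}$ and the boundary conditions in \eqref{BC}. If $t^{*} \in \mathopen{]}0,T\mathclose{[}$, this is the usual interior-minimum condition. If $t^{*} \in \{0,T\}$, then the endpoint condition at $t^{*}$ appearing in \eqref{BC} cannot be of Dirichlet type (it would impose $u(t^{*})=0$, contradicting $m<0$); hence either $u'(t^{*})=0$ is directly imposed, or the boundary conditions are periodic. In the periodic case $u(0)=u(T)=m$, so both endpoints are minimizers; this yields $u'(0)\geq 0$ and $u'(T)\leq 0$, which combined with periodicity $u'(0)=u'(T)$ forces $u'(t^{*})=0$.

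The heart of the argument is a concavity step on the connected component $N\subseteq\mathopen{[}0,T\mathclose{]}$ of the open set $\Omega_{-}:=\{t\in\mathopen{[}0,T\mathclose{]}: u(t)<0\}$ containing $t^{*}$. On $N$, the sign hypothesis on $f$ gives $(\varphi(u'))'\leq 0$ a.e., so $\varphi(u')$ is non-increasing; since $\varphi^{-1}$ is strictly increasing, $u'$ is non-increasing on $N$. Integrating $u'$ from $t^{*}$ to any $t\in N$ and using $u'(t^{*})=0$, I get $u(t)\leq u(t^{*})=m$, and since $u\geq m$ trivially, $u\equiv m$ on $N$.

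To pass from $N$ to all of $\mathopen{[}0,T\mathclose{]}$, I exploit the maximality of $N$ as a component of the open set $\Omega_{-}$: if $N\neq\mathopen{[}0,T\mathclose{]}$, any point $\tau\in\mathopen{[}0,T\mathclose{]}\setminus N$ that is a limit of $N$ satisfies $u(\tau)=m<0$ by continuity, hence $\tau\in\Omega_{-}$, and the component of $\Omega_{-}$ containing $\tau$ must then intersect $N$, forcing $\tau\in N$, a contradiction. Thus $N=\mathopen{[}0,T\mathclose{]}$ and $u\equiv m$. Finally, any Dirichlet-type endpoint condition in \eqref{BC} would give the contradiction $m=0$, so \eqref{BC} is Neumann or periodic, both of which the constant function $u\equiv m$ satisfies; this yields the stated conclusion with $u_{0}=m<0$. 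The only real obstacle is the bookkeeping across the five boundary conditions, with the periodic case at a boundary minimizer requiring the one-sided comparison $u'(0)\geq 0$, $u'(T)\leq 0$; past this step, the concavity mechanism is robust and uniform.
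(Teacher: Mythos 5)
Your proof is correct and follows essentially the same route as the paper's: both establish $u'=0$ at a minimizer via the boundary conditions, use the sign of $f$ to get $\varphi(u')$ (hence $u'$) non-increasing on the maximal interval where $u<0$, conclude $u$ is constant there, and extend to all of $\mathopen{[}0,T\mathclose{]}$ by maximality. The only differences are organizational (the paper frames it as ``a non-constant solution cannot have a negative minimum'' rather than deducing $u\equiv m$ directly), not mathematical.
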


We remark that the second alternative occurs if and only if the boundary conditions are of Neumann and periodic type along with $f(t,u_{0})\equiv 0$.

\begin{proof}
Let $u(t)$ be a non-constant solution of \eqref{pb-appB} and let $\hat{t}\in\mathopen{[}0,T\mathclose{]}$ be a minimum point of $u(t)$. By contradiction, we suppose that $u(\hat{t}) < 0$. We first claim that $u'(\hat{t}) = 0$: indeed, this is straightforward if $\hat{t} \in \mathopen{]}0,T\mathclose{[}$, while it follows from the boundary conditions if $\hat{t} \in \{0,T\}$ (in the periodic case, one observes that
if $u'(0) = u'(T) \neq 0$, then $u(0) = u(T)$ cannot be an extreme value of $u(t)$). 
Let $\mathopen{]}t_{1},t_{2}\mathclose{[}\subseteq\mathopen{]}0,T\mathclose{[}$ be the maximal open interval containing $\hat{t}$ with $u(t)<0$, for all $t\in\mathopen{]}t_{1},t_{2}\mathclose{[}$.
From
\begin{equation*}
\varphi(u'(\hat{t}_{2}))-\varphi(u'(\hat{t}_{1})) = -\int_{\hat{t}_{1}}^{\hat{t}_{2}} f(\xi,u(\xi)) \,d\xi \leq 0,
\end{equation*}
for all $\hat{t}_{1},\hat{t}_{2}$ with $t_{1} \leq \hat{t}_{1} < \hat{t}_{2} \leq t_{2}$, and the fact that $\varphi$ is an increasing homeomorphism, we immediately deduce that the map $t\mapsto u'(t)$ is non-increasing on $\mathopen{]}t_{1},t_{2}\mathclose{[}$. Thus,
$u'(t) \geq 0$ for $t \in \mathopen{]}t_{1},\hat{t}\mathclose{]}$ and $u'(t) \leq 0$ for $t \in \mathopen{[}\hat{t},t_{2}\mathclose{[}$.
Therefore, since $\hat{t}$ is a minimum point of $u(t)$, we have $u(t) \equiv u(\hat{t})$ for every $t \in \mathopen{]}t_{1},t_{2}\mathclose{[}$ and so $ \mathopen{[}t_{1},t_{2}\mathclose{]}= \mathopen{[}0,T\mathclose{]}$.
Summing up, $u(t)$ is constant on the whole $\mathopen{[}0,T\mathclose{]}$, a contradiction.
\end{proof}

A straightforward corollary of Theorem~\ref{weak-max-principle} is the following.

\begin{corollary}\label{cor-weak-max-principle}
Let $f\colon \mathopen{[}0,T\mathclose{]}\times \mathbb{R}\to \mathbb{R}$ be an $L^{1}$-Ca\-ra\-th\'{e}o\-dory function such that $f(t,u)>0$, for a.e.~$t\in\mathopen{[}0,T\mathclose{]}$, for all $u<0$. Then, every solution $u(t)$ to \eqref{pb-appB} is non-negative on $\mathopen{[}0,T\mathclose{]}$.
\end{corollary}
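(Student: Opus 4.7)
The plan is to deduce Corollary~\ref{cor-weak-max-principle} directly from Theorem~\ref{weak-max-principle} by ruling out the degenerate alternative in its conclusion. Since the hypothesis $f(t,u)>0$ for a.e.~$t$ and all $u<0$ is strictly stronger than the hypothesis $f(t,u)\geq 0$ required by Theorem~\ref{weak-max-principle}, that theorem applies and yields, for any solution $u(t)$ of \eqref{pb-appB}, the dichotomy: either $u(t)\geq 0$ on $\mathopen{[}0,T\mathclose{]}$, or $u\equiv u_{0}$ for some constant $u_{0}<0$.

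The first alternative is the desired conclusion, so it suffices to exclude the second. Assume for contradiction that $u\equiv u_{0}$ with $u_{0}<0$. Then $u'(t)\equiv 0$ and hence $(\varphi(u'(t)))' = (\varphi(0))' = 0$ in the sense of distributions (since $\varphi(0)=0$). Substituting back into the differential equation $(\varphi(u'))'+f(t,u)=0$ we obtain $f(t,u_{0})=0$ for a.e.~$t\in\mathopen{[}0,T\mathclose{]}$. This contradicts the strict sign assumption $f(t,u_{0})>0$ for a.e.~$t$, since $u_{0}<0$. Hence the constant-solution alternative cannot occur, and every solution must be non-negative on $\mathopen{[}0,T\mathclose{]}$.

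There is essentially no obstacle here: the whole work is done by Theorem~\ref{weak-max-principle}, and the corollary merely observes that the strict inequality excludes the pathological case of a negative constant solution (which, as remarked after the theorem, can only appear under Neumann or periodic boundary conditions when $f(t,u_{0})$ vanishes identically in $t$). The argument is independent of the particular choice of boundary conditions in \eqref{BC} and of the interval $I$ on which $\varphi$ is defined, so no additional hypotheses need to be invoked.
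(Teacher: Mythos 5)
Your proposal is correct and is exactly the argument the paper intends: the corollary is stated without proof as a ``straightforward'' consequence of Theorem~\ref{weak-max-principle}, and the intended reasoning is precisely to invoke the dichotomy of that theorem and rule out the constant alternative $u\equiv u_{0}<0$, since substituting it into the equation forces $f(t,u_{0})=0$ for a.e.~$t$, contradicting the strict positivity hypothesis. Nothing further is needed.
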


The second result is a strong maximum principle that ensures the positivity of non-trivial non-negative $T$-periodic solutions to \eqref{pb-appB}.
To state it, we need to introduce the following condition: 
\begin{itemize}
\item[$(\varphi_{*})$] for all $\sigma>1$ it holds that
\begin{equation*}
\limsup_{\xi\to0^{+}}\dfrac{\varphi^{-1}(\sigma \xi)}{\varphi^{-1}(\xi)} < + \infty.
\end{equation*}
\end{itemize}
According to \cite{GaMaZa-93}, such an assumption can be meant as an upper $\sigma$-condition for $\varphi^{-1}$ as $\xi \to 0^+$; notice that
it is fulfilled by $\varphi$ defined in \eqref{phi-L} (corresponding to the Minkowksi-curvature operator). With this in mind, the following result (to be applied to equation \eqref{eq-fl}, with $\varphi$ defined as in \eqref{phi-L}, $a \in L^{1}(\mathopen{[}0,T\mathclose{]})$ and $g(u)/u$ bounded in a right neighborhood of zero) holds true.

\begin{theorem}[Strong maximum principle]\label{strong-max-principle}
Assume that $\varphi$ satisfies $(\varphi_{*})$ and let $f\colon \mathopen{[}0,T\mathclose{]}\times \mathbb{R}\to \mathbb{R}$ be an $L^{1}$-Ca\-ra\-th\'{e}o\-dory function such that 
\begin{itemize}
\item [$(i)$] $f(t,0)=0$, for a.e.~$t\in\mathopen{[}0,T\mathclose{]}$;
\item [$(ii)$] there exists $\gamma\in L^{1}(\mathopen{[}0,T\mathclose{]},{\mathbb{R}}^{+})$ such that
\begin{equation*}
\limsup_{u\to0^{+}}\dfrac{|f(t,u)|}{\varphi(u)} \leq \gamma(t), \quad \text{uniformly a.e.~$t\in\mathopen{[}0,T\mathclose{]}$.}
\end{equation*}
\end{itemize}
Then, every non-trivial non-negative solution $u(t)$ to \eqref{pb-appB} satisfies:
\begin{itemize} 
\item $u(t)>0$ for all $t\in\mathopen{]}0,T\mathclose{[}$, if $\mathfrak{B}(u,u') = (u(0),u(T))$;
\item $u(t)>0$ for all $t\in\mathopen{]}0,T\mathclose{]}$, if $\mathfrak{B}(u,u') = (u(0),u'(T))$;
\item $u(t)>0$ for all $t\in\mathopen{[}0,T\mathclose{[}$, if $\mathfrak{B}(u,u') = (u'(0),u(T))$;
\item $u(t)>0$ for all $t\in\mathopen{[}0,T\mathclose{]}$, if $\mathfrak{B}(u,u') = (u'(0),u'(T))$
or $\mathfrak{B}(u,u') = (u(0)-u(T),u'(0)-u'(T))$.
\end{itemize}
\end{theorem}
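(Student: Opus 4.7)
My plan is to argue by contradiction via a contraction estimate near a vanishing point, following the standard strategy for strong maximum principles for $\varphi$-Laplacian equations.

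I would first assume, toward a contradiction, that $u \not\equiv 0$ is a non-negative solution with $u(t_0) = 0$ at some $t_0$ in the interval where strict positivity is claimed. Letting $Z := \{t \in \mathopen{[}0,T\mathclose{]} : u(t)=0\}$, the non-triviality of $u$ together with the presence of $t_0 \in Z$ yields a boundary point $\hat t$ of $Z$ such that $u \not\equiv 0$ in every one-sided neighborhood of $\hat t$. The first task is then to upgrade the vanishing to also $u'(\hat t) = 0$: when $\hat t \in \mathopen{]}0,T\mathclose{[}$, $\hat t$ is an interior minimum of the $\mathcal{C}^1$ function $u$; when $\hat t \in \{0,T\}$, the boundary conditions give this directly if $\mathfrak{B}(u,u')$ contains $u'(0)$ or $u'(T)$, and in the periodic case one observes that $u(0)=u(T)=0$ are both minima, so $u'(0) \geq 0 \geq u'(T)$ together with $u'(0) = u'(T)$ forces $u'(0)=u'(T)=0$.

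Next, without loss of generality there is $\eta_0 > 0$ with $u \not\equiv 0$ on every interval $\mathopen{[}\hat t, \hat t + \eta\mathclose{]}$, $\eta \leq \eta_0$. Fix $\epsilon>0$ and, by hypothesis $(ii)$, choose $\delta > 0$ so that $|f(t,u)| \leq (\gamma(t)+\epsilon)\varphi(u)$ for $0 \leq u \leq \delta$ and a.e.~$t$; shrink $\eta_0$ so that $u \leq \delta$ on $\mathopen{[}\hat t, \hat t + \eta_0\mathclose{]}$. Setting $M(\eta) := \max_{\mathopen{[}\hat t, \hat t + \eta\mathclose{]}} u$ and $\Gamma(\eta) := \int_{\hat t}^{\hat t + \eta} (\gamma(s)+\epsilon)\,ds$, integrating the ODE from $\hat t$ together with $u'(\hat t)=0$ yields
\[
|\varphi(u'(t))| = \biggl| \int_{\hat t}^{t} f(s,u(s))\,ds \biggr| \leq \varphi(M(\eta))\,\Gamma(\eta), \qquad t \in \mathopen{[}\hat t, \hat t + \eta\mathclose{]}.
\]
Defining $\psi(s) := \max\{\varphi^{-1}(s), -\varphi^{-1}(-s)\}$, which is increasing with $\psi(0)=0$, we deduce $|u'(t)| \leq \psi(\varphi(M(\eta))\Gamma(\eta))$. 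Because $u(\hat t)=0$, integrating this bound from $\hat t$ and taking the supremum over $\mathopen{[}\hat t, \hat t + \eta\mathclose{]}$ gives the central estimate
\[
M(\eta) \leq \eta\,\psi\bigl(\varphi(M(\eta))\,\Gamma(\eta)\bigr).
\]

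The upper $\sigma$-condition $(\varphi_*)$ is exactly what is needed to close the argument: iterated over a fixed $\sigma > 1$, it yields a constant $C \geq 1$ such that $\psi(\varphi(m))/m \leq C$ for all sufficiently small $m > 0$. Since $\Gamma(\eta) \leq 1$ for small $\eta$ and $\psi$ is monotone, one has $\psi(\varphi(M(\eta))\Gamma(\eta)) \leq \psi(\varphi(M(\eta))) \leq C\,M(\eta)$, hence $M(\eta) \leq C\eta\,M(\eta)$. For $\eta < 1/C$ this forces $M(\eta) = 0$, contradicting the choice of $\eta_0$. I expect the main obstacle to be the clean extraction of this uniform constant $C$ from $(\varphi_*)$: for the Minkowski choice $\varphi(\xi) = \xi/\sqrt{1-\xi^2}$ one can take $C = 1$ by oddness, but in the general setting one has to iterate $(\varphi_*)$ (and implicitly exploit the analogous behavior of $\varphi^{-1}$ on both sides of $0$) to control the asymmetry of $\varphi$ near the origin. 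The list of cases in the statement then follows uniformly, by choosing $\hat t$ according to the permitted boundary vanishing allowed by $\mathfrak{B}$.
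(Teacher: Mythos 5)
Your argument is correct and is essentially the paper's own proof: localize at a point $\hat t$ with $u(\hat t)=u'(\hat t)=0$, use $(ii)$ to get $|f(t,u)|\leq \gamma_{1}(t)\varphi(M)$ on a small interval, integrate the equation twice to obtain $M(\eta)\leq C\eta\,M(\eta)$, conclude $M(\eta)=0$ for $\eta$ small (the paper fixes $\varepsilon<1/K_{1}$ with $K_{1}=C_{\sigma}$, $\sigma=\|\gamma_{1}\|_{L^{1}}$, rather than shrinking $\eta$ so that $\Gamma(\eta)\leq 1$, but this is the same mechanism), and propagate finitely many times. The one rough edge, which you yourself flag, is the two-sided bound via $\psi(s)=\max\{\varphi^{-1}(s),-\varphi^{-1}(-s)\}$, whose second branch is not controlled by $(\varphi_{*})$ for asymmetric $\varphi$; it is also unnecessary, since $u\geq 0$ and $u(\hat t)=0$ mean you only need the one-sided estimate $u'\leq\varphi^{-1}(\varphi(M)\Gamma)$ on the forward interval to bound $M$ from above.
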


\begin{proof}
First of all, we remark the following two facts. From hypothesis $(\varphi_{*})$, we deduce that for every $\sigma>1$ there exist $\eta>0$ and $C_{\sigma}>0$ such that
\begin{equation*}
\varphi^{-1}(\sigma \xi) \leq C_{\sigma} \varphi^{-1}(\xi), \quad \text{for all $\xi$ with $\sigma \xi\in\mathopen{[}0,\eta\mathclose{]}\cap J$.}
\end{equation*}
From hypotheses $(i)$ and $(ii)$, we deduce that there exists $\delta>0$ such that
\begin{equation*}
|f(t,u)|\leq \gamma_{1}(t)\varphi(u), \quad \text{a.e.~$t\in\mathopen{[}0,T\mathclose{]}$, for all $u\in\mathopen{[}0,\delta\mathclose{]}\cap I$.}
\end{equation*}
Without loss of generality we can assume $\delta\leq\eta$. Let us set $K_{1}:=C_{\sigma}$ with $\sigma=\|\gamma_{1}\|_{L^{1}}$.

Let $u(t)$ be a non-trivial non-negative solution $u(t)$ to \eqref{pb-appB}. We are going to show that if $u(t_0) = u'(t_0) = 0$ for some $t_0 \in \mathopen{[}0,T\mathclose{]}$, then $u\equiv0$ on $\mathopen{[}0,T\mathclose{]}$. It is easy to see that, arguing by contradiction, this implies the thesis for any boundary condition.

We proceed similarly as in \cite[Section~2]{MaNjZa-95}, where a maximum principle for a Dirichlet problem involving a $p$-Laplacian operator is proposed, by proving that there exists $\varepsilon>0$, independent of $u(t)$ and $t_{0}$, such that $u(t)=0$, for all $t\in\mathopen{[}t_{0}-\varepsilon,t_{0}+\varepsilon\mathclose{]}$ (or for all $t\in\mathopen{[}t_{0},t_{0}+\varepsilon\mathclose{]}$, or for all $t\in\mathopen{[}t_{0}-\varepsilon,t_{0}\mathclose{]}$, if $t_{0}=0$ or $t_{0}=T$, respectively). By repeating the argument a finite number of times, we conclude.

For the sake of simplicity in the exposition, we assume that $t_{0}\in\mathopen{]}0,T\mathclose{[}$.
We fix $\varepsilon\in\mathopen{]}0, 1/K_{1} \mathclose{[}$ and we assume by contradiction that $u\not\equiv 0$ on 
$\mathopen{[}t_{0}-\varepsilon,t_{0}+\varepsilon\mathclose{]}$. Then, there exists $\varepsilon_{1}\in\mathopen{]}0, \varepsilon\mathclose{]}$ such that 
\begin{equation*}
0 < M:= \max_{t \in\mathopen{[}t_{0}-\varepsilon_{1},t_{0}+\varepsilon_{1}\mathclose{]}}u(t) \leq\delta, \quad \text{for all $t\in\mathopen{[}t_{0}-\varepsilon_{1},t_{0}+\varepsilon_{1}\mathclose{]}$.}
\end{equation*}
Then, for all $t\in\mathopen{[}t_{0}-\varepsilon_{1},t_{0}+\varepsilon_{1}\mathclose{]}$, the following holds
\begin{equation*}
\begin{aligned}
u(t) &= \int_{t_{0}}^{t} \varphi^{-1} \biggl{(} \int_{t_{0}}^{\xi} -f(s,u(s)) \,ds \biggr{)} \,d\xi 
\leq \int_{t_{0}}^{t} \varphi^{-1} \biggl{(} \int_{t_{0}}^{\xi} \gamma_{1}(s)\varphi(M) \,ds \biggr{)} \,d\xi \\
& \leq \int_{t_{0}}^{t} \varphi^{-1} \bigl{(} \|\gamma_{1}\|_{L^{1}} \varphi(M) \bigr{)} \,d\xi
\leq \varepsilon_{1} K_{1} M.
\end{aligned}
\end{equation*}
Then $M \leq \varepsilon_{1} K_{1}M$
and so $1\leq \varepsilon_{1} K_{1} \leq \varepsilon K_{1} < 1$, a contradiction.
Hence the proof is completed.
\end{proof}

\section{A continuation theorem}\label{appendix-B}

In this second appendix we present a continuation theorem for the first order system
\begin{equation}\label{B-system}
\begin{cases}
\, x_{1}' = \varphi^{-1}(x_{2}) \\
\, x_{2}' = - \vartheta f_{\lambda}(t,x_{1}),
\end{cases}
\end{equation}
where $\vartheta\in\mathopen{[}0,1\mathclose{]}$ and 
\begin{itemize}
\item $\varphi\colon I \to \mathbb{R}$ is an increasing homeomorphism defined on an open (possibly bounded) interval $I\subseteq\mathbb{R}$ containing $0$, with $\varphi(0)=0$;
\item $f\colon \mathopen{[}0,T\mathclose{]}\times \mathbb{R}\to \mathbb{R}$ is an $L^{1}$-Ca\-ra\-th\'{e}o\-dory function.
\end{itemize}
We refer to Section~\ref{section-2} for the abstract coincidence degree setting and in particular for the definitions of the Banach spaces $X$, $Z$ and of the linear operators $L$, $J$, $Q$ and of the nonlinear Nemytskii operator $N$ induced by $\varphi^{-1}$ and $-f$ (actually, in 
Section~\ref{section-2} the case $f = f_{\lambda}$, with $f_{\lambda}$ defined in \eqref{def-fl}, was considered, but the arguments work the same for a general $L^{1}$-Ca\-ra\-th\'{e}o\-dory function $f$).

Our purpose is to provide a generalization of the classical Mawhin's theorem (cf.~\cite[Th\'{e}or\`{e}me~2]{Ma-69} or \cite[Theorem~4.1]{Ma-93}) in the framework of $T$-periodic problem associated with \eqref{B-system}. More precisely, the result we are going to illustrate allows us to reduce the computation of the coincidence degree $\mathrm{D}_{L}(L-N,\Omega)$ on an open (possibly unbounded) set of the form $\Omega:=\Omega_{1}\times\Omega_{2}$ to the computation of the finite-dimensional Brouwer degree (henceforth denoted by $\mathrm{deg}_{\mathrm{B}}$) of the average map $f^{\#} \colon \mathbb{R} \to \mathbb{R}$ defined by
\begin{equation}\label{def-fsharp}
f^{\#}(s) := \dfrac{1}{T} \int_{0}^{T}f(t,s) \,dt.
\end{equation}

In \cite[Section~3]{FeZa-17tmna} the authors have provided analogous results for the $T$-periodic problem associated with a cyclic feedback system of the form
\begin{equation*}
\begin{cases}
\, x_{1}' = g_{1}(x_{2}) \\
\, x_{2}' = g_{2}(x_{3}) \\
\qquad \vdots \\
\, x_{n-1}' = g_{n-1}(x_{n}) \\
\, x_{n}' = -\vartheta h(t,x_{1},\ldots,x_{n}),
\end{cases}
\end{equation*}
which clearly includes \eqref{B-system} as a special case. Our continuation theorem can be meant as a variant of \cite[Theorem~3.10]{FeZa-17tmna}, which however deals with a bounded set $\Omega$ and a homeomorphism $\varphi$ of the whole real line.
Due to these differences, and since \cite[Theorem~3.10]{FeZa-17tmna} has been obtained as a corollary of a series of more abstract results, we give here the precise statement together with a self-contained proof.

\begin{theorem}\label{th-B.1}
Let $\Omega:=\Omega_{1}\times\Omega_{2} \subseteq X$ be an open set such that $0\in\Omega_{2}$.
Suppose that the following conditions hold.
\begin{itemize}
\item[$(i)$] There exists a bounded set $B \subseteq X$ such that, for every $\vartheta\in\mathopen{]}0,1\mathclose{]}$, if $x\in\overline{\Omega}$ is a $T$-periodic solution to \eqref{B-system}, then $x \in B \cap \Omega$.
\item [$(ii)$] The set $(f^{\#})^{-1}(0)\cap\Omega_{1}$ is compact, where $f^{\#}$ is defined in \eqref{def-fsharp}.
\end{itemize}
Then
\begin{equation*}
\mathrm{D}_{L}(L-N,\Omega) = \mathrm{deg}_{\mathrm{B}}(f^{\#},\Omega_{1}\cap\mathbb{R},0).
\end{equation*}
\end{theorem}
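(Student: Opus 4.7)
The plan is to compute $\mathrm{D}_{L}(L-N,\Omega)$ through a two-stage Leray--Schauder homotopy of the fixed-point operator $\Phi(x) = Px + JQN(x) + K(\mathrm{Id}-Q)N(x)$ (see Section~\ref{section-2}), and then to reduce the resulting expression to a Brouwer degree on $\ker L \cong \mathbb{R}^{2}$. For the first stage I would introduce
\begin{equation*}
\Phi^{\vartheta}(x) := Px + JQN(x) + K(\mathrm{Id}-Q)N^{\vartheta}(x), \quad \vartheta \in [0,1],
\end{equation*}
with $N^{\vartheta}(x)(t) := (\varphi^{-1}(x_{2}(t)), -\vartheta f(t,x_{1}(t)))$, so that $\Phi^{1} = \Phi$. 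Projecting the equation $x = \Phi^{\vartheta}(x)$ by $P$ forces $QN(x) = 0$ (i.e.\ $\int_{0}^{T}\varphi^{-1}(x_{2})\,dt = 0$ and $\int_{0}^{T}f(t,x_{1})\,dt = 0$), and then applying $L$ gives $Lx = N^{\vartheta}(x)$, which is exactly system~\eqref{B-system}. Hence the fixed points of $\Phi^{\vartheta}$ in $\Omega$ coincide, for $\vartheta \in \mathopen{]}0,1\mathclose{]}$, with the $T$-periodic solutions of~\eqref{B-system}, which by~$(i)$ lie in $B \cap \Omega$; while at $\vartheta = 0$ the system forces $x_{2} \equiv 0$ and $x_{1} \equiv c_{1}$ subject to $f^{\#}(c_{1}) = 0$, so the fixed-point set reduces to $\{(c_{1},0) : c_{1} \in (f^{\#})^{-1}(0) \cap \Omega_{1}\}$, which is compact by~$(ii)$. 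Thus the full family $\bigcup_{\vartheta \in [0,1]}\mathrm{Fix}(\Phi^{\vartheta},\Omega)$ is compact and contained in $\Omega$, and the Leray--Schauder homotopy invariance yields $\mathrm{D}_{L}(L-N,\Omega) = \mathrm{deg}_{\mathrm{LS}}(\mathrm{Id} - \Phi^{0}, \Omega, 0)$.

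For the second stage I would deform $\Phi^{0}$ to the finite-rank operator $\tilde{\Phi}^{0}(x) := Px + JQN(x)$ via
\begin{equation*}
\tilde{\Phi}^{\sigma}(x) := Px + JQN(x) + \sigma K(\mathrm{Id}-Q)N^{0}(x), \quad \sigma \in [0,1],
\end{equation*}
using the key remark that the first component of $(\mathrm{Id}-Q)N^{0}(x)$ vanishes identically whenever $x_{2}$ is constant, so that the same projection analysis shows that the fixed-point set of $\tilde{\Phi}^{\sigma}$ remains the same compact set $\{(c_{1},0) : c_{1} \in (f^{\#})^{-1}(0) \cap \Omega_{1}\}$ throughout $\sigma \in [0,1]$. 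A second application of the Leray--Schauder homotopy invariance combined with the standard finite-rank reduction of the Leray--Schauder degree to $\ker L \cong \mathbb{R}^{2}$ then gives
\begin{equation*}
\mathrm{deg}_{\mathrm{LS}}(\mathrm{Id} - \tilde{\Phi}^{0}, \Omega, 0) = \mathrm{deg}_{\mathrm{B}}\bigl(\Psi, (\Omega_{1}\cap\mathbb{R}) \times (\Omega_{2}\cap\mathbb{R}), 0\bigr),
\end{equation*}
where $\Psi(s_{1},s_{2}) := (-\varphi^{-1}(s_{2}), f^{\#}(s_{1}))$ is the restriction of $\mathrm{Id} - \tilde{\Phi}^{0}$ to $\ker L$.

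To conclude, I would homotope $\varphi^{-1}$ to the identity through the monotone family $(1-t)s_{2} + t\varphi^{-1}(s_{2})$---whose only zero in $s_{2}$ remains $s_{2} = 0$---reducing $\Psi$ to $\Psi_{0}(s_{1},s_{2}) = (-s_{2}, f^{\#}(s_{1}))$. At a regular zero $(s_{1}^{*},0)$ of $f^{\#}$, a direct Jacobian computation gives $\mathrm{sign}(\det D\Psi_{0}(s_{1}^{*},0)) = \mathrm{sign}\bigl((f^{\#})'(s_{1}^{*})\bigr)$, whence summing over the regular zeros (with a density argument for the degenerate case) yields $\mathrm{deg}_{\mathrm{B}}(\Psi,\cdot,0) = \mathrm{deg}_{\mathrm{B}}(f^{\#}, \Omega_{1}\cap\mathbb{R}, 0)$, as required. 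The main obstacle is ensuring compactness of the fixed-point sets along both homotopies on the possibly unbounded set $\Omega$: hypothesis~$(i)$ controls the ``dynamical'' fixed points arising from non-trivial solutions of~\eqref{B-system} for $\vartheta > 0$, while hypothesis~$(ii)$ controls the ``static'' fixed points produced at $\vartheta = 0$, and the whole argument rests on the interplay between these two.
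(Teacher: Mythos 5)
Your proposal is correct and follows essentially the same route as the paper: your homotopy $\Phi^{\vartheta}$ is exactly the fixed-point operator associated with the paper's homotopy $\mathcal{N}(x,\vartheta)=(N_{1}x_{2},\vartheta N_{2}x_{1}+(1-\vartheta)Q_{2}N_{2}x_{1})$, and both arguments reduce to computing the planar Brouwer degree of $(s_{1},s_{2})\mapsto(-\varphi^{-1}(s_{2}),f^{\#}(s_{1}))$ on $\Omega\cap\mathbb{R}^{2}$. The only (harmless) differences are that you re-derive the reduction formula via the auxiliary homotopy $\tilde{\Phi}^{\sigma}$ instead of citing it, and you evaluate the final two-dimensional degree by a linear homotopy plus a Jacobian-sign count, whereas the paper uses the permutation-matrix composition rule together with the product property and the fact that $\mathrm{deg}_{\mathrm{B}}(-\varphi^{-1},\Omega_{2}\cap\mathbb{R},0)=-1$.
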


\begin{proof}
Let $\eta\colon \mathbb{R}^{2} \to \mathbb{R}^{2}$ be defined as
\begin{equation*}
\eta(s):=\bigl{(}-\varphi^{-1}(s_{2}),f^{\#}(s_{1})\bigr{)}, \quad s=(s_{1},s_{2})\in\mathbb{R}^{2}.
\end{equation*}
We first claim that
\begin{equation}\label{eq-step1}
\mathrm{D}_{L}(L-N,\Omega) = \mathrm{deg}_{\mathrm{B}}(\eta,\Omega \cap \mathbb{R}^{2},0).
\end{equation}
To show this, we introduce the homotopy $\mathcal{N}\colon X\times\mathopen{[}0,1\mathclose{]}\to Z$ defined as
\begin{equation*}
\mathcal{N}(x,\vartheta) := (N_{1}x_{2},\vartheta N_{2}x_{1}+(1-\vartheta) Q_{2}N_{2}x_{1}), \quad x=(x_{1},x_{2})\in X.
\end{equation*}
Letting
\begin{equation*}
\mathcal{S}_{\vartheta}:=\bigl{\{}x\in\Omega\cap \mathrm{dom}\,L \colon Lx=\mathcal{N}(x,\vartheta)\bigr{\}},
\end{equation*}
we prove that the set $\Sigma := \bigcup_{\vartheta\in\mathopen{[}0,1\mathclose{]}} \mathcal{S}_{\vartheta}$ is a compact subset of $\Omega$.

Let us first consider the case $\vartheta \in \mathopen{]}0,1\mathclose{]}$. Then, $x = (x_{1},x_{2}) \in \mathcal{S}_{\theta}$ solves the system 
\begin{equation*}
\begin{cases}
\, L_{1} x_{1} = N_{1} x_{2} \\
\, L_{2} x_{2} = \vartheta N_{2} x_{1} + (1-\vartheta) Q_{2}N_{2}x_{1}.
\end{cases}
\end{equation*}
By applying the operator $Q_{2}$ to the second equation, we obtain $Q_{2} N_{2}x_{1} = 0$, so that $x$ is a $T$-periodic solution to \eqref{B-system}. By assumption $(i)$, we deduce $\mathcal{S}_{\theta} \subseteq B$ for every $\vartheta \in \mathopen{]}0,1\mathclose{]}$.

On the other hand, let $\vartheta = 0$. Then, $x = (x_{1},x_{2}) \in \mathcal{S}_0$ satisfies 
\begin{equation*}
\begin{cases}
\, L_{1} x_{1} = N_{1} x_{2} \\
\, L_{2} x_{2} = Q_{2}N_{2}x_{1}.
\end{cases}
\end{equation*}
Since $Q_{2} N_{2}x_{1} \in \mathrm{coker}\,L_{2}$, from the second equation we get $Q_{2}N_{2}x_{1}=0$ and $x_{2} \in \ker L_{2}$.
From the first equation, we have $Q_{1}N_{1}x_{2}=0$ and, moreover, since 
\begin{equation*}
\mathrm{Im}\,L_{1}\cap N_{1}(\Omega_{2} \cap \ker L_{2}) = \{0\},
\end{equation*}
we immediately obtain $x_{1}\in\ker L_{1}$. Summing up,
\begin{equation*}
\mathcal{S}_{0} = \bigl{\{}x\in\overline{\Omega}\cap \ker L \colon Q\mathcal{N}(x,0)=0\bigr{\}}.
\end{equation*}
Remarking that
\begin{equation}\label{eq-JQN}
\begin{aligned}
&Q_{1}N_{1} (s_{2})= \varphi^{-1}(s_{2}), &\text{for all $s_{2}\in\Omega_{2}\cap \ker L_{2} \cong \Omega_{2}\cap \mathbb{R}$,} \\
&Q_{2}N_{2} (s_{1})= -f^{\#}(s_{1}), &\text{for all $s_{1}\in\Omega_{1}\cap \ker L_{1} \cong \Omega_{1}\cap \mathbb{R}$,}
\end{aligned}
\end{equation}
and using the fact that $\varphi$ is a homeomorphism with $\varphi(0)=0$, we have
\begin{equation*}
\mathcal{S}_{0} = \bigl{\{}(\omega,0) \in\Omega\cap \mathbb{R}^{2}\colon \omega\in(f^{\#})^{-1}(0)\bigr{\}} = ((f^{\#})^{-1}(0)\cap\Omega_{1})\times\{0\}.
\end{equation*}
By hypothesis $(ii)$, we then deduce that $\mathcal{S}_{0}$ is compact.

By the above discussion, the set $\Sigma$ is bounded. From the $L$-complete continuity of $\mathcal{N}(\cdot,\vartheta)$, we deduce that $\Sigma$ is compact and, using again assumptions $(i)$ and $(ii)$, we finally obtain $\Sigma \subseteq \Omega$.

Via the homotopic invariance property of the degree, we then find
\begin{equation*}
\mathrm{D}_{L}(L-N,\Omega) = \mathrm{D}_{L}(L-\mathcal{N}(\cdot,1),\Omega) = \mathrm{D}_{L}(L-\mathcal{N}(\cdot,0),\Omega).
\end{equation*}
Next, using the reduction formula of the degree and the previous considerations about the solution set for $\vartheta = 0$, we obtain
\begin{equation*}
\mathrm{D}_{L}(L-N,\Omega) = \mathrm{deg}_{\mathrm{B}}(-JQN|_{\Omega\cap\ker L},\Omega \cap \ker L,0),
\end{equation*}
thus proving \eqref{eq-step1} in view of \eqref{eq-JQN}.

We now claim that
\begin{equation}\label{eq-step2}
\mathrm{deg}_{\mathrm{B}}(\eta,\Omega \cap \mathbb{R}^{2},0) = - \mathrm{deg}_{\mathrm{B}}(-\varphi^{-1},\Omega_{2}\cap\mathbb{R},0) \, \mathrm{deg}_{\mathrm{B}}(f^{\#},\Omega_{1}\cap\mathbb{R},0).
\end{equation}
To show this, let $\tilde{\eta}\colon \mathbb{R}^{2} \to \mathbb{R}^{2}$ be defined as
\begin{equation*}
\tilde{\eta}(s):=\bigl{(}f^{\#}(s_{1}),-\varphi^{-1}(s_{2})\bigr{)}, \quad s=(s_{1},s_{2})\in\mathbb{R}^{2}.
\end{equation*}
We notice that $\tilde{\eta}(s) = (P \eta) (s)$ for all $s \in \mathbb{R}^{2}$,
where
\begin{equation*}
P =
\begin{pmatrix}
0 & 1 \\
1 & 0
\end{pmatrix}
\in \mathbb{R}^{2\times 2}
\end{equation*}
is a permutation matrix.
Therefore, by a standard property of the Brouwer degree of a composition of maps, we have
\begin{align*}
\mathrm{deg}_{\mathrm{B}}(\tilde{\eta},\Omega \cap \mathbb{R}^{2},0) & = \mathrm{deg}_{\mathrm{B}}(P \eta,\Omega \cap \mathbb{R}^{2},0) \\
& = \mathrm{sign}(\det (P)) \, \mathrm{deg}_{\mathrm{B}}(\eta,\Omega \cap \mathbb{R}^{2},0) \\
& = - \mathrm{deg}_{\mathrm{B}}(\eta,\Omega \cap \mathbb{R}^{2},0)
\end{align*}
and the claim follows from the product property (cf.~\cite[Theorem~9.7]{Br-14}) of the Brouwer degree.

From \eqref{eq-step1} and \eqref{eq-step2}, we then have
\begin{equation*}
\mathrm{D}_{L}(L-N,\Omega) = - \mathrm{deg}_{\mathrm{B}}(-\varphi^{-1},\Omega_{2}\cap\mathbb{R},0) \, \mathrm{deg}_{\mathrm{B}}(f^{\#},\Omega_{1}\cap\mathbb{R},0).
\end{equation*}
Observing that $\mathrm{deg}_{\mathrm{B}}(-\varphi^{-1},\Omega_{2}\cap\mathbb{R},0)=-1$, coming from the fact that $-\varphi^{-1}$ is a decreasing homeomorphisms and $0\in\Omega_{2}$, we conclude.
\end{proof}

\section{The principal eigenvalue of Sturm--Liouville operators}\label{appendix-C}

In this section we provide a dynamical characterization of the principal eigenvalue of the $T$-periodic problem associated with the linear equation
\begin{equation}\label{eq-sturm}
(p(t)w')' + (\mu + q(t)) w = 0,
\end{equation}
where $p \in L^{\infty}_{T}$ with $p_{*} := \mathrm{ess}\inf p(t) > 0$ and $q \in L^{1}_{T}$. As a consequence, we will be able to prove that, whenever such a principal eigenvalue is strictly negative, solutions to the equation $(p(t)w')' + q(t)w = 0$ perform more and more rotations around the origin (of the phase-plane) as the time interval becomes larger and larger (see Corollary~\ref{cor-mu} below). 

In the particular case $p(t) \equiv 1$, in \cite[p.~6]{BoFe-18} such a result has been explained to follow as a consequence of the oscillation theory for Hill's equation (precisely, by exploiting the relationship between the disconjugacy of the equation and its Moser rotation number). 
The fact that such a statement is still valid in the more general case of equation \eqref{eq-sturm} is highly expected; however, it is not easy to find an appropriate reference in the literature. For this reason, we have decided to prove it in a 
self-contained way, by following the so-called rotation number approach to the periodic spectrum 
(cf.~\cite{GaZh-00,Za-03}) and thus giving an alternative proof even for the case $p(t) \equiv 1$.

In order to state our results, we rewrite equation \eqref{eq-sturm} (of course, solutions are meant as locally absolutely continuous functions 
$w(t)$ such that $p(t)w'(t)$ is locally absolutely continuous and the differential equation is satisfied almost everywhere) as the equivalent first order planar system 
\begin{equation}\label{sys-sturm}
\begin{cases}
\, z_{1}' = \dfrac{z_{2}}{p(t)}\\
\, z_{2}' = -(\mu+q(t))z_{1},
\end{cases}
\end{equation}
we pass to (clockwise) polar coordinates 
\begin{equation}\label{modpol}
(z_{1}(t),z_{2}(t)) = \ell(t) (\cos \theta(t), -\sin \theta(t)), \quad \ell(t) > 0,
\end{equation}
and we denote by $\theta_{\mu}(t;\theta_{0})$ the angular coordinate of the solution to \eqref{sys-sturm} with initial condition $(z_{1}(0),z_{2}(0)) = (\cos\theta_{0},-\sin\theta_{0})$. For further convenience we also observe that $\theta_{\mu}(t;\theta_{0})$ solves the differential equation
\begin{equation}\label{eq-theta}
\theta_{\mu}'(t;\theta_{0}) = \frac{\sin^{2} \theta_{\mu}(t;\theta_{0})}{p(t)} + (\mu + q(t)) \cos^{2} \theta_{\mu}(t;\theta_{0})
=: \Theta(t,\theta_{\mu}(t;\theta_{0});\mu).
\end{equation} 
With this notation, the following preliminary result can be stated.

\begin{lemma}\label{lem-f}
The function $f\colon\mathbb{R} \to \mathbb{R}$ defined by 
\begin{equation*}
f(\mu) := \min_{\theta_{0} \in \mathopen{[}0,2\pi\mathclose{[}} \bigl{(}\theta_{\mu}(T;\theta_{0}) - \theta_{0}\bigr{)}
\end{equation*}
is continuous, strictly increasing and such that
\begin{equation}\label{eq-f-}
\lim_{\mu \to - \infty} f(\mu) < 0
\end{equation}
and
\begin{equation}\label{eq-f+}
\lim_{\mu \to + \infty} f(\mu) = +\infty.
\end{equation}
\end{lemma}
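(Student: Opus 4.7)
The plan is to establish the four assertions (continuity, strict monotonicity, the limit at $+\infty$, the limit at $-\infty$) in this order.

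Continuity and strict monotonicity form a routine first step. Continuity follows because the joint continuous dependence of solutions of the Carath\'{e}odory ODE \eqref{eq-theta} on both the initial datum $\theta_{0}$ and the parameter $\mu$ makes $(\mu, \theta_{0}) \mapsto \theta_{\mu}(T;\theta_{0}) - \theta_{0}$ continuous, and because this quantity is $2\pi$-periodic in $\theta_{0}$ (since $\Theta$ is $\pi$-periodic in $\theta$), so the minimum is effectively taken over the compact interval $\mathopen{[}0, 2\pi\mathclose{]}$. For strict monotonicity, observe that $\Theta(t,\theta;\mu)$ is non-decreasing in $\mu$, and strictly increasing whenever $\cos\theta \neq 0$. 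A standard ODE comparison gives $\theta_{\mu_{1}}(t;\theta_{0}) \leq \theta_{\mu_{2}}(t;\theta_{0})$ for all $t \geq 0$ and all $\theta_{0}$; the inequality is strict at $t = T$ because no solution can remain on the discrete set $\{\cos\theta = 0\}$ for a positive-measure set of times (at these points $\theta' = 1/p(t) > 0$, so the orbit instantly leaves). Choosing a minimizer $\bar{\theta}_{0}$ for $f(\mu_{2})$, one then obtains $f(\mu_{1}) \leq \theta_{\mu_{1}}(T;\bar{\theta}_{0}) - \bar{\theta}_{0} < \theta_{\mu_{2}}(T;\bar{\theta}_{0}) - \bar{\theta}_{0} = f(\mu_{2})$.

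For the limit at $+\infty$, I would reason through the rotation number $\rho(\mu)$ of the lifted circle homeomorphism $F_{\mu} \colon \theta_{0} \mapsto \theta_{\mu}(T;\theta_{0})$: from its definition one always has $F_{\mu}(\theta_{0}) - \theta_{0} \geq 2\pi\rho(\mu) - 2\pi$ and hence $f(\mu) \geq 2\pi\rho(\mu) - 2\pi$. By classical Sturm--Liouville oscillation theory (applied, for instance, after decomposing $q = q_{1} + q_{2}$ with $q_{1} \in L^{\infty}$ and $\|q_{2}\|_{L^{1}}$ arbitrarily small, and comparing with the constant-coefficient equation $\|p\|_{\infty} u'' + \nu u = 0$), one shows that any solution has arbitrarily many zeros in $\mathopen{[}0,T\mathclose{]}$ for $\mu$ large, i.e.\ $\rho(\mu) \to +\infty$.

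The limit at $-\infty$ is the delicate step. By monotonicity of $f$, it suffices to produce $(\mu^{*}, \theta_{0}^{*})$ with $\theta_{\mu^{*}}(T;\theta_{0}^{*}) < \theta_{0}^{*}$; I choose $\theta_{0}^{*} = 0$ and $\mu^{*} \to -\infty$, aiming at $\theta_{\mu^{*}}(T; 0) \to -\pi/2$ (hence eventually negative). The mechanism is a nullcline argument: for $\mu$ sufficiently negative, the locus $\{\Theta(t,\theta;\mu) = 0\}$ inside the strip $\mathopen{]}-\pi/2, 0\mathclose{[} \times \mathopen{[}0,T\mathclose{]}$ is the graph $\theta_{\star}(t;\mu) := -\arctan\sqrt{p(t)|\mu + q(t)|}$, well-defined on $\{t : \mu + q(t) < 0\}$ (whose complement $B_{\mu}$ has measure $\leq \|q^{+}\|_{L^{1}}/|\mu|$ by Chebyshev's inequality); this nullcline is attracting (since $\Theta < 0$ above and $\Theta > 0$ below) and tends uniformly to $-\pi/2$ as $\mu \to -\infty$. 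Starting at $\theta_{0} = 0$, where $\theta'(0) = \mu + q(0) \ll 0$, the solution is pushed into any fixed neighborhood of $-\pi/2$ in time $O(1/|\mu|)$ and is then trapped there. The main obstacle is to make this heuristic quantitative in the merely $L^{1}$ setting (without pointwise bounds on $q$): I plan to handle the ``bad'' set $B_{\mu}$ via the universal bound $|\theta'(t)| \leq 1/p_{*} + |\mu + q(t)|$, whose integral over $B_{\mu}$ is bounded by $\int_{B_{\mu}}\bigl(1/p_{*} + (\mu+q)^{+}\bigr)\,dt \to 0$ as $\mu \to -\infty$ (by $|B_{\mu}|\to 0$ and dominated convergence applied to $(\mu+q)^{+} \leq q^{+} \in L^{1}$). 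On the ``good'' complement, the attraction toward $\theta_{\star}$ provides the remaining control, yielding $\theta_{\mu}(T;0) \leq -\pi/2 + o(1)$ as $\mu \to -\infty$ and so $\lim_{\mu \to -\infty} f(\mu) \leq -\pi/2 < 0$.
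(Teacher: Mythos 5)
Your treatment of continuity is identical to the paper's, and your monotonicity argument is a comparison-principle rephrasing of the same fact: the paper computes $\theta_{\mu_1}(T;\theta_0)-\theta_{\mu_2}(T;\theta_0)=\int_0^T\alpha(s)e^{\int_s^T\beta}ds$ with $\alpha=(\mu_1-\mu_2)\cos^2\theta_{\mu_1}\geq0$ not a.e.\ zero, which is the quantitative version of your observation that the flow cannot sit on $\{\cos\theta=0\}$. For \eqref{eq-f+} you take a genuinely different route: the inequality $f(\mu)>2\pi\rho(\mu)-2\pi$ followed by Sturm comparison (via a $q=q_1+q_2$ decomposition) to force $\rho(\mu)\to+\infty$, whereas the paper introduces $\sqrt{\mu}$-scaled polar coordinates $(z_1,z_2)=\ell_\mu(\cos\vartheta_\mu,-\sqrt{\mu}\sin\vartheta_\mu)$ and reads the divergence of $\vartheta_\mu(T)-\vartheta_0$ directly from a one-line integral estimate. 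Your route invokes more machinery (rotation number, Sturm comparison, the $L^1$ decomposition) but is certainly workable; the paper's scaled Prüfer trick is more self-contained and handles the $L^1$ potential without any decomposition.

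The gap is in \eqref{eq-f-}. The paper's argument never mentions a nullcline: it proves $z_1(t)>0$ on $[0,T]$ by an integration-by-parts plus Cauchy--Schwarz estimate yielding $p_*\leq T\int_0^T(\mu+q)^+$, and then bounds $\max z_1/\min z_1$ to show $z_2(T)>0$; this gives $\theta_\mu(T;0)\in\mathopen{]}-\pi/2,0\mathclose{[}$ with no pointwise reasoning about $q$. Your nullcline plan has two concrete problems. First, ``$\theta'(0)=\mu+q(0)\ll0$'' is not meaningful for $q\in L^1_T$; what you actually need is $\int_0^\tau(\mu+q)\,dt\approx\mu\tau$ for $\tau=O(1/|\mu|)$, which does hold but requires rephrasing. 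Second, and more seriously, the statement ``and is then trapped there'' is asserted, not proved: the attracting nullcline $\theta_\star(t)$ is time-dependent and can jump arbitrarily close to $0$ on the set $\{|\mu+q|\text{ small}\}$, and on the good set the upward drift when $\theta<\theta_\star$ is only bounded by $1/p_*$, so its total contribution over $[0,T]$ is a priori $O(T/p_*)$, not $o(1)$. To close this you would need an extra decomposition (e.g.\ into $\{|\mu+q|\geq M\}$, where you can show $\theta'<0$ for $\theta$ in a fixed subinterval of $\mathopen{]}-\pi/2,0\mathclose{[}$, and its complement, whose measure $\to0$ as $\mu\to-\infty$ for each fixed $M$) --- precisely the quantitative step your ``I plan to handle ... the remaining control'' phrasing signals is still missing. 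Until that step is carried out, the proof of \eqref{eq-f-} is incomplete; the paper's energy estimate sidesteps the whole issue.
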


\begin{proof}
We split the proof in some steps.

\smallskip
\noindent
\textit{Continuity.}
This follows from the facts that the map $(\mu,\theta_{0}) \mapsto \theta_{\mu}(T;\theta_{0})$ is continuous (as a consequence of the continuous dependence of solutions to Cauchy problems on parameters and initial values) and that the minimum with respect to $\theta_{0}$ is taken on the compact set $\mathbb{R}/2\pi\mathbb{Z}$. 

\smallskip
\noindent
\textit{Monotonicity.}
We are going to show that, for any $\theta_{0} \in \mathopen{[}0,2\pi\mathclose{[}$ and for any $\mu_{1}, \mu_{2} \in \mathbb{R}$ with $\mu_{1} > \mu_{2}$, it holds that 
\begin{equation}\label{eq-theta-mu}
\theta_{\mu_{1}}(T;\theta_{0}) > \theta_{\mu_{2}}(T;\theta_{0}).
\end{equation}
To prove this, we observe that, using \eqref{eq-theta}, 
the function $\theta(t) := \theta_{\mu_{1}}(t;\theta_{0}) - \theta_{\mu_{2}}(t;\theta_{0})$ solves
\begin{align*}
\theta'(t) & = \Theta(t,\theta_{\mu_{1}}(t;\theta_{0});\mu_{1}) - \Theta(t,\theta_{\mu_{2}}(t;\theta_{0});\mu_{2}) \\
& = \Theta(t,\theta_{\mu_{1}}(t;\theta_{0});\mu_{1}) - \Theta(t,\theta_{\mu_{1}}(t;\theta_{0});\mu_{2}) \\
& \quad + \Theta(t,\theta_{\mu_{1}}(t;\theta_{0});\mu_{2}) - \Theta(t,\theta_{\mu_{2}}(t;\theta_{0});\mu_{2}) \\
& = \alpha(t) + \beta(t) \theta(t), 
\end{align*}
where 
\begin{equation*}
\alpha(t) = (\mu_{1} - \mu_{2}) \cos^{2} \theta_{\mu_{1}}(t;\theta_{0})
\end{equation*}
and 
\begin{equation*}
\beta(t) = \frac{\partial\Theta}{\partial \theta}(t,\xi(t);\mu_{2}), \quad \text{for some $\xi(t) \in \mathopen{[} \theta_{\mu_{1}}(t;\theta_{0}),\theta_{\mu_{2}}(t;\theta_{0}) \mathclose{]}$.}
\end{equation*}
Therefore, for every $t \in \mathbb{R}$,
\begin{equation*}
\theta(t) = \int_{0}^{t} \alpha(s) e^{\int_s^{t} \beta(\tau)\,d\tau} \,ds.
\end{equation*}
Observing that, by \eqref{eq-theta}, the function $t \mapsto \cos^{2} \theta_{\mu_{1}}(t;\theta_{0})$ cannot vanish on any subinterval of the real line, we obtain $\alpha(t) \geq 0$ for any $t \in \mathopen{[}0,T\mathclose{]}$ with $\int_{0}^{T} \alpha(t)\,dt > 0$. As a consequence, $\theta(T) > 0$ and 
\eqref{eq-theta-mu} is thus proved.

\smallskip
\noindent
\textit{Verification of \eqref{eq-f-}.}
We are going to show that, if $\mu$ is negative and large enough, then the solution 
$(z_{1}(t),z_{2}(t))$ of \eqref{sys-sturm} with $(z_{1}(0),z_{2}(0)) = (1,0)$ satisfies
\begin{equation*}
\min_{t \in \mathopen{[}0,T\mathclose{]}}z_{1}(t) > 0 \quad \text{ and } \quad z_{2}(T) > 0.
\end{equation*}
This easily implies $\theta_{\mu}(T;0) \in \mathopen{]}-\tfrac{\pi}{4},0\mathclose{[}$ and, finally, $f(\mu) < 0$ for $\mu$ negative and large enough. Below, we follow closely the arguments used in the proof of \cite[Lemma~2]{Za-03}.

We start by proving that $z_{1}(t) > 0$ for all $t \in \mathopen{[}0,T\mathclose{]}$. By contradiction, assume that this is not true and let $t_{1} \in \mathopen{]}0,T\mathclose{]}$ be the first 
zero of $z_{1}(t)$. Then, for any $t \in\mathopen{[}0,t_{1}\mathclose{]}$,
\begin{align*}
0 \leq z_{1}(t) & \leq \int_{t}^{t_{1}} | z_{1}'(s) | \,ds \leq \sqrt{t_{1}-t} \, \biggl{(}\int_{t}^{t_{1}} z_{1}'(s)^{2} \,ds \biggr{)}^{\frac{1}{2}} 
\\ & \leq \sqrt{T} \biggl{(}\int_{0}^{t_{1}} z_{1}'(s)^{2} \,ds \biggr{)}^{\frac{1}{2}}. 
\end{align*}
Integrating by parts on $\mathopen{[}0,t_{1}\mathclose{]}$ the right-hand side of the equality $- z_{1} z_{2}' = (\mu + q(t))z_{1}^{2}$ and using the above inequality, we find
\begin{equation*}
\int_{0}^{t_{1}} p(t)z_{1}'(t)^{2} \,dt = \int_{0}^{t_{1}} (\mu + q(t)) z_{1}(t)^{2} \,dt \leq T \int_{0}^{t_{1}} (\mu + q(t))\,dt \int_{0}^{t_{1}} z_{1}'(t)^{2} \,dt.
\end{equation*}
As a consequence
\begin{equation*}
0 < p_{*} \leq T \int_{0}^{t_{1}} (\mu + q(t))\,dt \leq T \int_{0}^{T} (\mu + q(t))^{+} \,dt,
\end{equation*}
where $(\mu + q(t))^{+}$ is the positive part of $\mu + q(t)$. Since, by the monotone convergence theorem, $\int_{0}^{T} (\mu + q(t))^{+} \,dt \to 0$ as $\mu \to -\infty$, a contradiction is reached. 

We now prove that $z_{2}(T) > 0$. We have
\begin{align*}
z_{2}(T) & = \int_{0}^{T} z_{2}'(t)\,dt = - \int_{0}^{T} (\mu + q(t)) z_{1}(t) \,dt \\
& \geq \min_{t \in \mathopen{[}0,T\mathclose{]}}z_{1}(t) \int_{0}^{T} (\mu + q(t))^{-} \,dt - \max_{t \in \mathopen{[}0,T\mathclose{]}}z_{1}(t) \int_{0}^{T} (\mu + q(t))^{+} \,dt,
\end{align*}
where $(\mu + q(t))^{-}$ is the negative part of $\mu + q(t)$. On the other hand,
\begin{equation*}
 \max_{t \in \mathopen{[}0,T\mathclose{]}}z_{1}(t) - \min_{t \in \mathopen{[}0,T\mathclose{]}}z_{1}(t) \leq \int_{0}^{T} | z_{1}'(t) | \,dt \leq \sqrt{T} \biggl{(}\int_{0}^{T} z_{1}'(t)^{2} \,dt \biggr{)}^{\frac{1}{2}}
\end{equation*}
and using the equation we can further estimate
\begin{align*}
\int_{0}^{T} z_{1}'(t)^{2} \,dt & \leq \frac{1}{p_{*}} \int_{0}^{T} p(t)z_{1}'(t)^{2} \,dt = 
\frac{1}{p_{*}} \int_{0}^{T} (\mu + q(t)) z_{1}(t)^{2} \,dt \\ & \leq \frac{1}{p_{*}} \biggl{(}\max_{t \in \mathopen{[}0,T\mathclose{]}}z_{1}(t) \biggr{)}^{2}
\int_{0}^{T} (\mu + q(t))^{+} \,dt.
\end{align*}
By combining the above two inequalities, we find
\begin{equation*}
\left(1 - \sqrt{\frac{T}{p_{*}} \int_{0}^{T} (\mu + q(t))^{+} \,dt} \right) \max_{t \in \mathopen{[}0,T\mathclose{]}}z_{1}(t) \leq \min_{t \in \mathopen{[}0,T\mathclose{]}}z_{1}(t). 
\end{equation*}
In conclusion,
\begin{align*}
z_{2}(T) & \geq \min_{t \in \mathopen{[}0,T\mathclose{]}}z_{1}(t) \left[\rule{0cm}{25pt}\right.\int_{0}^{T} (\mu + q(t))^{-} \,dt \\ 
& \quad \left.- \int_{0}^{T} (\mu + q(t))^{+} \,dt \left(1 - \sqrt{\frac{T}{p_{*}} \int_{0}^{T} (\mu + q(t))^{+} \,dt} \right)^{-1}\rule{0cm}{25pt}\right].
\end{align*}
Since, by the monotone convergence theorem, $\int_{0}^{T} (\mu + q(t))^{-} \,dt \to +\infty$ and $\int_{0}^{T} (\mu + q(t))^{+} \,dt \to 0$ as $\mu \to -\infty$, the term in the square bracket is strictly positive and the conclusion follows.

\smallskip
\noindent
\textit{Verification of \eqref{eq-f+}.}
For this part of the proof, we use a trick based on the introduction of a system of ``deformed'' polar coordinates in the phase-plane. More precisely, instead of \eqref{modpol}, for $\mu \geq 1$ we now write 
\begin{equation*}
(z_{1}(t),z_{2}(t)) = \ell_{\mu}(t)(\cos \vartheta_{\mu}(t),- \sqrt{\mu}\sin \vartheta_{\mu}(t)), \quad \ell_{\mu}(t) > 0,
\end{equation*}
The angular coordinates $\vartheta_{\mu}$ and $\theta$ are different in general, but they fulfill the property
\begin{equation*}
\vartheta_{\mu} = k \pi \; \Longleftrightarrow \; \theta = k \pi, \quad \text{for all $k \in \mathbb{Z}$}
\end{equation*}
(see \cite[Section~2]{Bo-11} and the references therein for further details). As a consequence of this observation, if we denote by $\vartheta_{\mu}(t;\vartheta_{0})$ the angular coordinate of the solution to system \eqref{sys-sturm} with initial condition $(z_{1}(0),z_{2}(0)) = (\cos\vartheta_{0},-\sqrt{\mu}\sin\vartheta_{0})$, \eqref{eq-f+} will be proved if we show that
\begin{equation*}
\lim_{\mu \to +\infty} \bigl{(}\vartheta_{\mu}(T;\vartheta_{0}) - \vartheta_{0} \bigr{)}= +\infty, \quad \text{uniformly in $\vartheta_{0} \in \mathopen{[}0,2\pi\mathclose{[}$.}
\end{equation*}
To this end, we first observe that $\vartheta_{\mu}(t;\vartheta_{0})$ solves the differential equation
\begin{equation*}
\vartheta_{\mu}'(t;\vartheta_{0}) = \sqrt{\mu} \biggl{[} \frac{\sin^{2} \vartheta_{\mu}(t;\vartheta_{0})}{p(t)} + \biggl{(}1 + \frac{q(t)}{\mu}\biggr{)} \cos^{2} \vartheta_{\mu}(t;\vartheta_{0}) \biggr{]}.
\end{equation*} 
Then, by integrating on $\mathopen{[}0,T\mathclose{]}$, we have
\begin{align*}
\vartheta_{\mu}(T;\vartheta_{0}) - \vartheta_{0} & \geq \sqrt{\mu} \int_{0}^{T} \biggl{(} \frac{\sin^{2} \vartheta_{\mu}(t;\vartheta_{0})}{p(t)} + \cos^{2} \vartheta_{\mu}(t;\vartheta_{0})\biggr{)} \,dt \\
& \quad - \frac{1}{\sqrt{\mu}} \int_{0}^{T} | q(t) | \cos^{2} \vartheta_{\mu}(t;\vartheta_{0})\,dt \\
& \geq \sqrt{\mu} \, T \min\biggl{\{} \frac{1}{\| p \|_{\infty}},1\biggr{\}} - \frac{\| q \|_{L^{1}_{T}}}{\sqrt{\mu}}.
\end{align*}
Then, taking the limit for $\mu \to +\infty$, the conclusion follows.
\end{proof}

We can now state and prove our result about the existence and dynamical characterization of the principal eigenvalue.

\begin{theorem}\label{th-mu}
There exists a unique value $\mu_{0} \in \mathbb{R}$ such that equation \eqref{eq-sturm} has a positive $T$-periodic solution; $\mu_{0}$ can be characterized as the unique zero of the function $f$ defined in Lemma~\ref{lem-f}.
\end{theorem}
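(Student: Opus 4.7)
\textbf{Plan}. The approach combines the monotonicity properties of $f$ from Lemma~\ref{lem-f} with a dynamical analysis of the angular Poincar\'{e} map of \eqref{sys-sturm}, crucially exploiting that $\Theta(t,\pi/2+k\pi;\mu)=1/p(t)>0$.

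By Lemma~\ref{lem-f}, $f$ is continuous, strictly increasing, and passes from strictly negative values to $+\infty$, so by the intermediate value theorem there exists a unique $\mu_{0}\in\mathbb{R}$ with $f(\mu_{0})=0$. It remains to show that this $\mu_{0}$ is precisely the value of $\mu$ for which \eqref{eq-sturm} admits a positive $T$-periodic solution.

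For the existence part, set $g_{\mu}(\theta_{0}):=\theta_{\mu}(T;\theta_{0})-\theta_{0}$, and observe that since $g_{\mu_{0}}$ is continuous and $2\pi$-periodic, its minimum value $0=f(\mu_{0})$ is attained at some $\theta_{0}^{*}$. I will take the solution $(z_{1},z_{2})$ of \eqref{sys-sturm} issued from the initial condition $(\cos\theta_{0}^{*},-\sin\theta_{0}^{*})$ and argue that $w:=z_{1}$ is the desired positive $T$-periodic solution. For the $T$-periodicity, the key observation is that Fermat's theorem at the minimum $\theta_{0}^{*}$ forces $\partial_{\theta_{0}}\theta_{\mu_{0}}(T;\theta_{0}^{*})=1$. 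Since \eqref{sys-sturm} is traceless, Liouville's formula gives $\det M=1$ for the monodromy matrix $M$; the identity $\theta_{\mu_{0}}(T;\theta_{0}^{*})=\theta_{0}^{*}$ means that the direction $(\cos\theta_{0}^{*},-\sin\theta_{0}^{*})$ is $M$-invariant with some eigenvalue $\rho>0$, and a direct computation in coordinates shows that $\partial_{\theta_{0}}\theta_{\mu_{0}}(T;\theta_{0}^{*})$ equals $\rho^{\pm 2}$ in the hyperbolic case $\rho\neq 1$ and equals $1$ in the parabolic case $\rho=1$. Hence the derivative condition forces $\rho=1$, so $(\cos\theta_{0}^{*},-\sin\theta_{0}^{*})$ is actually a fixed vector of $M$ and the corresponding solution is $T$-periodic.

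For the positivity of $w$, the pivotal observation is that $\Theta(t,\pi/2+k\pi;\mu_{0})=1/p(t)>0$, so any trajectory of \eqref{eq-theta} crosses the levels $\pi/2+k\pi$ only from below to above. Since $\theta_{\mu_{0}}(t;\theta_{0}^{*})$ begins and ends at $\theta_{0}^{*}$, any such crossing during $[0,T]$ would need to be compensated by a reverse crossing, which is forbidden by the sign of $\Theta$. Therefore $\theta_{\mu_{0}}(t;\theta_{0}^{*})$ remains confined to an open strip of the form $(-\pi/2+2k'\pi,\pi/2+2k'\pi)$, where $\cos\theta$ has constant sign; after a suitable sign normalization this yields $w(t)>0$ for every $t$. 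For the uniqueness: if $w_{1}$ and $w_{2}$ are positive $T$-periodic solutions corresponding to $\mu_{1}$ and $\mu_{2}$, then multiplying the respective equations by $w_{2}$ and $w_{1}$, subtracting, integrating on $[0,T]$, and using integration by parts with the $T$-periodicity of $w_{1},w_{2}$ gives
\[
(\mu_{1}-\mu_{2})\int_{0}^{T}w_{1}(t)w_{2}(t)\,dt=0;
\]
since $w_{1}w_{2}>0$, this forces $\mu_{1}=\mu_{2}$, hence equality with the unique zero $\mu_{0}$ of $f$.

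I expect the main technical obstacle to be the bridge between the infinitesimal derivative condition $\partial_{\theta_{0}}\theta_{\mu_{0}}(T;\theta_{0}^{*})=1$ and the algebraic eigenvalue condition $\rho=1$, i.e.\ the step that promotes a fixed point of the angular Poincar\'{e} map to a genuine periodic solution of the linear ODE. The positivity step, once the periodicity is secured, reduces cleanly to the one-directional crossing of $\theta$ at the vertical levels $\pi/2+k\pi$, and the uniqueness is a classical self-adjointness argument.
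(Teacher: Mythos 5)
Your proof is correct and follows essentially the same route as the paper: locate a minimum point $\theta_{0}^{*}$ of the angular displacement, use the vanishing of its derivative together with area preservation to force the radial coordinate to return to $1$ (hence $T$-periodicity), deduce positivity from the one-directional crossing of the levels $\tfrac{\pi}{2}+k\pi$, and prove uniqueness by the standard Green's-identity argument (which the paper only sketches as ``standard''). The single genuine difference is that you obtain the key identity $\partial_{\theta_{0}}\theta_{\mu_{0}}(T;\theta_{0}^{*})=\ell_{\mu_{0}}(T;\theta_{0}^{*})^{-2}$ from an eigenvalue computation for the monodromy matrix at the invariant direction (using $\det M=1$ via Liouville), whereas the paper derives the same formula for every $\theta_{0}$ by integrating the variational equation for $\partial\theta_{\mu_{0}}/\partial\theta_{0}$; both computations are sound.
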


\begin{proof}
With standard arguments it can be proved that, if $\hat{\mu}, \check{\mu} \in \mathbb{R}$ are such that equation \eqref{eq-sturm} has a positive $T$-periodic solution for $\mu = \hat{\mu}$ and $\mu = \check{\mu}$, then $\hat{\mu} = \check{\mu}$. Namely, the principal eigenvalue (if it exists) is unique. 

Let $\mu_{0} \in \mathbb{R}$ be the unique value such that $f(\mu_{0}) = 0$ (by Lemma~\ref{lem-f}).
To conclude the proof, we need to show that there exists an associated positive $T$-periodic solution.

To this end, we define the function $\eta(\theta_{0}) := \theta_{\mu_{0}}(T;\theta_{0}) - \theta_{0}$ and we first claim that
\begin{equation}\label{eq-eta}
\eta'(\theta_{0}) = \frac{1}{\ell_{\mu_{0}}(T;\theta_{0})^{2}} - 1,
\end{equation}
where, with an obvious notation, $\ell_{\mu_{0}}(t;\theta_{0})$ denotes the radial coordinate of the solution $(z_{1}(t;\theta_{0}),z_{2}(t;\theta_{0})$ to \eqref{sys-sturm} (for $\mu = \mu_{0}$) with initial condition $(z_{1}(0),z_{2}(0)) = (\cos\theta_{0},-\sin\theta_{0})$. To prove \eqref{eq-eta}, we first observe that
$\xi(t) := \tfrac{\partial \theta_{\mu_{0}}}{\partial \theta_{0}}(t;\theta_{0})$ satisfies $\xi(0) = 1$ and 
\begin{equation*}
\xi'(t) = 2\biggl{(} \dfrac{1}{p(t)} - (\mu_{0} + q(t)) \biggr{)} \sin \theta_{\mu_{0}}(t;\theta_{0}) \cos \theta_{\mu_{0}}(t;\theta_{0}) \xi(t), 
\end{equation*}
which in turn, by deriving the identity
\begin{equation*}
\ell_{\mu_{0}}(t;\theta_{0})^{2} = |z_{1}(t;\theta_{0})|^{2}+ |z_{1}(t;\theta_{0})|^{2},
\end{equation*}
can be written as
\begin{equation*}
\xi'(t) = - 2 \,\dfrac{\ell_{\mu_{0}}'(t;\theta_{0})}{\ell_{\mu_{0}}(t;\theta_{0})} \,\xi(t).
\end{equation*}
By elementary integration, we obtain $\xi(t) = (\ell_{\mu_{0}}(t;\theta_{0}))^{-2}$, from which \eqref{eq-eta} plainly follows.

We are now in a position to conclude. Let $\theta_{0}^{*} \in \mathopen{[}0,2\pi\mathclose{[}$ be a minimum point of the function $\eta$. Then,
\begin{equation*}
\eta(\theta_{0}^{*}) = f(\mu_{0}) = 0 \quad \text{ and } \quad \eta'(\theta_{0}^{*}) = 0,
\end{equation*}
implying, by the definition of $\eta$ and \eqref{eq-eta}, that
\begin{equation*}
\theta_{\mu_{0}}(T;\theta_{0}) = \theta_{0} \quad \text{ and } \quad \ell_{\mu_{0}}(T;\theta_{0}) = 1.
\end{equation*}
Since $\ell_{\mu_{0}}(0;\theta_{0}) = 1$, we have thus found a $T$-periodic solution to \eqref{sys-sturm}, having zero winding number around the origin.
As is well known (as a consequence of the fact that $\theta_{\mu_{0}}'$ is strictly increasing whenever 
$\theta_{\mu_{0}} \in \tfrac{\pi}{2} + \mathbb{Z}$, compare with the end of the proof of Theorem~\ref{th-sub}) this gives rise to a one-signed $T$-periodic solution to \eqref{eq-sturm}, thus concluding the proof. 
\end{proof}

As a consequence of Lemma~\ref{lem-f} and Theorem~\ref{th-mu}, we can finally state and prove the following corollary.

\begin{corollary}\label{cor-mu}
Assume $\mu_{0} < 0$. Then, it holds that
\begin{equation*}
\lim_{k \to +\infty} \bigl{(}\theta_{0}(kT;\theta_{0}) - \theta_{0}\bigr{)} = +\infty, \quad \text{uniformly in $\theta_{0} \in \mathopen{[}0,2\pi\mathclose{[}$.}
\end{equation*}
\end{corollary}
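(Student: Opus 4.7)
The plan is to derive the statement from Lemma~\ref{lem-f} and Theorem~\ref{th-mu} by iterating over periods, exploiting the cocycle property of the scalar first-order ODE \eqref{eq-theta} satisfied by the angular coordinate.

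First, I would note that by Theorem~\ref{th-mu} the value $\mu_{0}$ is the unique zero of the continuous strictly increasing function $f$ of Lemma~\ref{lem-f}; together with the hypothesis $\mu_{0} < 0$, this immediately gives the strict inequality
\begin{equation*}
f(0) > 0.
\end{equation*}
By the very definition of $f$ (and since $\theta_{\mu}(T;\theta_{0})-\theta_{0}$ depends only on $\theta_{0} \bmod 2\pi$), this in turn means
\begin{equation*}
\theta_{0}(T;\theta_{0}) - \theta_{0} \geq f(0), \quad \text{for every } \theta_{0} \in \mathbb{R}.
\end{equation*}

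Next, I would invoke the $T$-periodicity in $t$ of the coefficients $p(t), q(t)$, hence of the right-hand side $\Theta(t,\theta;0)$ of \eqref{eq-theta}. Since \eqref{eq-theta} is a scalar Carath\'{e}odory ODE with right-hand side smooth (hence locally Lipschitz) in $\theta$, the uniqueness of solutions combined with the translation-invariance $\Theta(t+T,\theta;0) = \Theta(t,\theta;0)$ yields the cocycle identity
\begin{equation*}
\theta_{0}\bigl((k+1)T;\theta_{0}\bigr) \,=\, \theta_{0}\bigl(T;\,\theta_{0}(kT;\theta_{0})\bigr), \quad \text{for all } k \in \mathbb{N}.
\end{equation*}
Applying the uniform lower bound displayed above, with $\theta_{0}(kT;\theta_{0})$ playing the role of the initial angle, gives $\theta_{0}((k+1)T;\theta_{0}) - \theta_{0}(kT;\theta_{0}) \geq f(0)$ for every $k$. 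A telescoping sum then produces
\begin{equation*}
\theta_{0}(kT;\theta_{0}) - \theta_{0} \,\geq\, k\, f(0),
\end{equation*}
whose right-hand side tends to $+\infty$ as $k \to +\infty$, independently of $\theta_{0}$.

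The argument is essentially routine once the cocycle identity is in hand, and I do not anticipate a genuine obstacle. The one point that deserves a brief justification is precisely this cocycle identity: unlike the standard autonomous case, here one must appeal to the $T$-periodicity of $\Theta(\cdot,\theta;0)$ in $t$ together with Carath\'{e}odory uniqueness, noting that the lower bound $f(0) > 0$ is what makes the argument work \emph{uniformly} in $\theta_{0}$ (this uniformity is built into the definition of $f$ as a minimum over all starting angles, which is why Lemma~\ref{lem-f} is formulated the way it is).
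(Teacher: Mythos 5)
Your proof is correct and follows essentially the same route as the paper's: the explicit lower bound $f(0) > 0$ is exactly the paper's constant $\eta$, and your cocycle identity is precisely the paper's observation that the $T$-periodicity of $\Theta(\cdot,\theta;0)$ in $t$ makes each increment $\theta_{0}(jT;\theta_{0}) - \theta_{0}((j-1)T;\theta_{0})$ equal to a single-period increment, so the telescoping argument is identical.
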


\begin{proof}
Since the function $f$ defined in Lemma~\ref{lem-f} is strictly increasing, the assumption $\mu_{0} < 0$ implies that, for a suitable $\eta > 0$,
\begin{equation*}
\theta_{0}(T;\theta_{0}) - \theta_{0} \geq \eta, \quad \text{for all $\theta_{0} \in \mathbb{R}$.}
\end{equation*}
Now we write
\begin{equation*}
\theta_{0}(kT;\theta_{0}) - \theta_{0} = \sum_{j=1}^{k} \bigl{(}\theta_{0}(jT;\theta_{0}) - \theta_{0}((j-1)T;\theta_{0})\bigr{)}
\end{equation*}
and we observe that, since equation \eqref{eq-theta} is $T$-periodic in $t$ and $2\pi$-periodic in $\theta_{0}$, it holds that
\begin{equation*}
\theta_{0}(jT;\theta_{0}) - \theta_{0}((j-1)T;\theta_{0}) = \theta_{0}(T;\theta_{0,j}) - \theta_{0,j},
\end{equation*}
where $\theta_{0,j} = \theta_{0}((j-1)T;\theta_{0})$. Hence, for any $\theta_{0} \in \mathbb{R}$,
\begin{equation*}
\theta_{0}(kT;\theta_{0}) - \theta_{0} \geq k \eta,
\end{equation*}
thus implying the conclusion.
\end{proof}

\begin{remark}\label{rem-C1}
Higher eigenvalues $\mu_{k}', \mu_{k}''$, with $k \geq 1$, can be defined as the unique zeros of the functions
\begin{equation*}
\mu \mapsto \max_{\theta_{0} \in \mathopen{[}0,2\pi\mathclose{[}} \bigl{(}\theta_{\mu}(T;\theta_{0}) - \theta_{0} - 2k\pi\bigr{)}, \quad
\mu \mapsto \min_{\theta_{0} \in \mathopen{[}0,2\pi\mathclose{[}} \bigl{(}\theta_{\mu}(T;\theta_{0}) - \theta_{0} - 2k\pi\bigr{)},
\end{equation*}
respectively; of course, they give rise to eigenfunctions having exactly $2k$ zeros on $\mathopen{[}0,T\mathclose{[}$. With some extra work with respect to the arguments used in the proof of Theorem~\ref{th-mu}, it can be shown that the above defined eigenvalues form a sequence
\begin{equation*}
\mu_{0} < \mu_{1}' \leq \mu_{1}'' < \mu_{2}' \leq \mu_{2}'' < \ldots < \mu_{k}' \leq \mu_{k}'' < \ldots
\end{equation*}
and that they are the only eigenvalues for the $T$-periodic problem associated with \eqref{eq-sturm}. For more details, we refer to \cite{GaZh-00,Za-03} (in the case $p(t) \equiv 1$).
$\hfill\lhd$
\end{remark}

\bibliographystyle{elsart-num-sort}
\bibliography{BoFe-biblio}

\end{document}